\documentclass[11pt,letterpaper]{amsart}
\usepackage[plainpages=false]{hyperref}
\usepackage{amsfonts,latexsym,rawfonts,amsmath,amssymb,amsthm, mathrsfs, lscape, calligra}
\usepackage{MnSymbol}
\usepackage{verbatim}
\usepackage{enumerate}
\usepackage{centernot}
\usepackage{mathtools}
\usepackage{tikz}
\usepackage{tikz-cd}
\usepackage{}
\usepackage[all]{xy}
\usepackage{graphicx,psfrag}

\usepackage{array, tabularx, color}

\usepackage{setspace}

\newtheorem{thm}{Theorem}[section]
\newtheorem{cor}[thm]{Corollary}
\newtheorem{lem}[thm]{Lemma}
\newtheorem{clm}[thm]{Claim}
\newtheorem{prop}[thm]{Proposition}

\newtheorem{exam}[thm]{Example}

\theoremstyle{remark}
\newtheorem{rmk}[thm]{Remark}

\theoremstyle{definition}
\newtheorem{defi}[thm]{Definition}

\newcommand{\CBbb}{\mathbb C}

\newcommand{\PBbb}{\mathbb P}

\newcommand{\RBbb}{\mathbb R}

\newcommand{\Ccal}{\mathcal C}

\newcommand{\Ecal}{\mathcal E}
\newcommand{\Fcal}{\mathcal F}
\newcommand{\Gcal}{\mathcal G}
\newcommand{\Hcal}{\mathcal H}
\newcommand{\Ical}{\mathcal I}

\newcommand{\Kcal}{\mathcal K}

\newcommand{\Ocal}{\mathcal O}
\newcommand{\Pcal}{\mathcal P}
\newcommand{\Qcal}{\mathcal Q}

\newcommand{\Tcal}{\mathcal T}

\DeclareMathOperator{\dvol}{dvol}

\DeclareMathOperator{\Img}{Im}

\numberwithin{equation}{section}

\makeatletter
\@namedef{subjclassname@2020}{\textup{2020} Mathematics Subject Classification}
\makeatother

\begin{document}
\title[Point Singularities and Bubbling]{Point Singularities and Bubbling in Degenerations of Rank-Two Bundles on Threefolds}
\author[Chen]{Xuemiao Chen}
\address{Department of Pure Mathematics, University of Waterloo, Waterloo,
Ontario, Canada N2L 3G1}
\email{x67chen@uwaterloo.ca}

\begin{abstract}
We study one-parameter degenerations of rank-two vector bundles on complex threefolds to a rank-two torsion-free sheaf with an isolated point singularity. We prove a rigidity identity: the algebraic bubbling multiplicity of the
central fiber equals one half of the Ext-length of the singularity of its
reflexive hull. Furthermore, bubbling is forced when a family develops such an isolated point singularity. We use this identity to obtain smoothability obstructions and
construct sharp local smoothings. We realize the local example from earlier joint work with Sun as a global
degeneration of smooth Hermitian--Yang--Mills connections. Rescaling this
degeneration produces a smooth non-flat Hermitian--Yang--Mills connection
on $\mathbb C^3$ with density one at infinity, whose tangent cone at
infinity has flat connection part and a multiplicity-one line as the blow-up cycle.
We also construct smoothings of elementary modifications of projective-cone
singularities with explicit algebraic bubbles. These examples give local models for Hermitian--Yang--Mills point
bubbling in complex dimension three and distinguish this phenomenon from
bubbling along complex codimension-two loci.
\end{abstract}
\maketitle

\tableofcontents

\thispagestyle{empty}

\bibliographystyle{amsplain}
\section{Introduction}
We study one-parameter degenerations whose general fibers are rank-two locally free sheaves on a complex threefold and whose central fiber is a rank-two torsion-free sheaf with an isolated point singularity. In particular, such families have to be flat over the parameter space (see Lemma \ref{ReflexiveProperty}). At first sight, one might expect
such a point defect to be merely a local singularity of the limiting
sheaf, independent of algebraic bubbling, meaning that the central fiber coincides
with its reflexive hull. This expectation is not
unreasonable: in higher rank, an isolated essential singularity can indeed
occur on its own, without forcing algebraic bubbling; see Example
\ref{BubbleNotSymmetric}. The main point of this paper is that rank two
behaves differently. For rank-two bundles, an isolated point singularity cannot form on its own:
it necessarily carries algebraic bubbling, and the bubbling multiplicity is
rigidly determined by the singularity of the reflexive hull. This rigidity leads to smoothability obstructions, sharp
local models, and new examples of point bubbling for Hermitian--Yang--Mills
connections on K\"ahler threefolds. In particular, one of the resulting
analytic bubbles is a smooth non-flat Hermitian--Yang--Mills connection on
$\mathbb C^3$ with minimal density at infinity: its tangent cone at infinity
has flat connection part, but carries a multiplicity-one complex line as
its blow-up cycle. The local problem is motivated by three related analytic considerations.

\begin{itemize}
\item The first motivation comes from recent work on tangent cones for
admissible Hermitian--Yang--Mills (HYM) connections. In joint work with Song Sun \cite{ChenSun:18, ChenSun:19, ChenSun:20a,
ChenSun:20b}, the analytic tangent cones were shown to be algebraic
invariants of the stalk of the underlying sheaf at the point. However, there are examples in which the original admissible HYM connection has a genuine
essential singularity at the point, but its analytic tangent-cone connection is flat. Thus the singularity is invisible at the level of the tangent-cone
connection. This shows that, in order to understand singularities of
admissible HYM connections, one cannot rely only on a Federer-type
dimension-reduction argument based on tangent cone connections. This motivates us to
study the process by which such singularities form.

\item The second motivation comes from the analytic subtleties in
gauge-theoretic approaches to Donaldson--Thomas invariants for
Calabi--Yau threefolds using stable vector bundles
\cite{DonaldsonThomas:98, Thomas:2000}. Two main technical issues are
transversality and compactness. The compactness issue is largely due to
analytic bubbling, which in this setting may occur both along curves and
at isolated points. Along a generic part of a bubbling curve, the local
model is expected to be a family of instantons on $\RBbb^4$
parameterized by the curve, and Fueter sections are expected to play a
key role in describing this model. By contrast, isolated point bubbling
has no such transverse four-dimensional model and remains much less
understood.

\item A third motivation comes from $G_2$-gauge theory. After taking the
product with $\RBbb$, isolated point bubbling for HYM connections on
$\CBbb^3$ gives a local model for bubbling along a real one-dimensional
locus for $G_2$-instantons. This is a subtle analytic issue in the
Donaldson--Thomas--Segal program \cite{DonaldsonSegal:11}.
\end{itemize} 

We now describe the main results and the organization of the paper.

The first main result, proved as Theorem~\ref{thm:bubbling-rigidity}, is a
local rigidity identity when a family degenerates to a rank-two torsion free sheaf with a point singularity. In the local
setting of Section~\ref{TwoNotionsOfM}, let $\Ecal$ be a one-parameter
degeneration over $B\times \Delta$ whose general fibers are rank-two locally
free sheaves, and the central fiber $\Ccal$ is torsion-free with an
isolated point singularity. Then
$$
l(\Ccal^{**}/\Ccal)=\frac{1}{2}l(\Ecal xt^1(\Ccal,\Ocal_B))>0.
$$
Motivated by singularity formation on surfaces in gauge theory, we call $m^{\mathrm{alg}}:=l(\Ccal^{**}/\Ccal)$ the algebraic bubbling
multiplicity inserted into the central fiber. Also
$m^{\mathrm{alg}}_{0}:=l(\Ecal xt^1(\Ccal,\Ocal_B))$ depends only on the reflexive hull $\Ccal^{**}$. Thus isolated point bubbling in rank two
is rigid: the bubbling multiplicity is forced by the singularity of the
reflexive hull. In particular, the central fiber is never reflexive, its
reflexive hull is not locally free, and the Ext-length is even. Furthermore, this suggests that one should not expect a uniform $L^2$ estimate for
all limiting sections when a sequence of HYM connections develops an
isolated point singularity. These
restrictions provide the numerical input for the smoothability problems
studied next. 

The next result shows that the numerical obstruction furnished by the
rigidity identity is sharp, but not purely numerical. Fix a rank-two
reflexive sheaf $\Fcal$ with an isolated point singularity. The identity
prescribes the length of any elementary modification $\Ccal\subset \Fcal$
that can occur as the central fiber of a smoothing; it does not prescribe
the quotient $\Fcal\twoheadrightarrow \Fcal/\Ccal$. In
Section~\ref{DeformingSingularities} we construct an explicit class of
singularities for which the prescribed length is attained: there exists a
quotient $\Fcal\twoheadrightarrow \tau$ of that length such that
$\Kcal er(\Fcal\to \tau)$ is the central fiber of a flat family with locally free
general fiber (see Proposition~\ref{thm-explicit-smoothable}). We then show
that the choice of quotient matters: a fixed reflexive sheaf may admit more
than one smoothable elementary modification. In the minimal case
$m^{\mathrm{alg}}=1$, however, the smoothable model is unique up to local
analytic coordinate change (see Theorem~\ref{thm:minimal-bubbling-model}). This unique minimal model is
the local algebraic model underlying the example discovered in \cite{ChenSun:18},
and it is the model globalized in the following section.

The minimal local model is realized globally in
Section~\ref{Section-Global appearance of the minimal singularity and bubbling}.
Theorem~\ref{thm:minimal-global-degeneration} constructs a flat family on
$\PBbb^3$ whose general fibers are stable rank two vector bundles and whose central
fiber is torsion free with two isolated minimal point singularities. Via the
Donaldson--Uhlenbeck--Yau theorem, this algebraic family gives a degeneration
of smooth Hermitian--Yang--Mills connections. Its Uhlenbeck limit is the
admissible connection on the reflexive hull of the central fiber and carries
no codimension-two blow-up cycle; nevertheless, the analytic tangent cone at
each point singularity has flat connection part and a multiplicity-one line as
its blow-up cycle. Thus the local example discovered in \cite{ChenSun:18}
appears as an actual degeneration of smooth HYM connections.

Rescaling at a singular point then produces one of the main analytic
objects of the paper: a centered minimal-density HYM bubble on
$\mathbb C^3$. This is a smooth non-flat HYM connection with density one
at infinity. Its tangent cone at infinity has flat connection part, but
the tangent-cone data still contains a nontrivial multiplicity-one complex
line as the blow-up cycle. The section ends with a
conditional compactification result: assuming the expected compatibility
between algebraic compactifications and tangent-cone data at infinity, the
boundary sheaf at infinity is forced into two explicit types, and the split
type is excluded for algebraic bubbles arising from families forming the
minimal singularity. Assuming the folklore expectation that the analytic bubble has split type at infinity, this gives a precise sense in which the centered analytic bubble differs from the algebraic bubble obtained by blowing up the degenerating family.

The final section shows that the preceding phenomena are not confined to
the minimal model.  Section~\ref{GlobalExamples} constructs a systematic
class of smoothable elementary modifications of projective-cone
singularities with explicitly computable algebraic bubbles. Starting from a
rank-two stable vector bundle $\underline{\Gcal}$ on $\PBbb^2$ with
$c_1(\underline{\Gcal})=0$, we form the associated reflexive cone $\Fcal$ on
$\PBbb^3$ and construct a flat family $\Ecal$ whose general fiber is locally
free and whose central fiber is an elementary modification
$\Ccal\subset \Fcal$ at the vertex. In this class, the algebraic bubble can be computed explicitly.

These projective-cone examples also provide explicit analytic local models.
The admissible HYM connection on the cone is induced by the HYM connection
on $\underline{\Gcal}$, and in the affine chart near the vertex it becomes
the standard HYM cone connection. Thus the examples give a large supply of
local models for isolated point bubbling in complex dimension three,
separate from the familiar bubbling along complex codimension-two loci. They
also show that algebraic bubbling multiplicity and analytic energy density
are not governed by a simple universal identity: as illustrated by
Example~\ref{NoRelation}, the two quantities record different features of
the same degeneration. Finally, after taking the product with $\RBbb$, these
models yield local models for bubbling along real one-dimensional loci for
$G_2$-instantons.

\subsection*{Notations and conventions} In this paper, we fix a few local notations. Let
$B\subset \CBbb^3$ be the three-dimensional unit ball centered at the
origin $o\in B$, let $\Delta\subset \CBbb$ be the unit disk centered at
the origin, and set
$$
        X=B\times \Delta,\qquad B_t=B\times\{t\}.
$$
We write $\Ocal=\Ocal_X$ for the structure sheaf of $X$ and
$\Ocal_t=\Ocal_{B_t}$ for the structure sheaf of $B_t$. By a family we
mean a coherent sheaf on $X$, usually denoted by $\Ecal$. Its fiber over
$t$ is
$$
        \Ecal_t:=\Ecal\otimes_{\Ocal}\Ocal_t .
$$
The subscript $t$ always denotes restriction to the fiber $B_t$. But to avoid confusion with the stalk of sheaf, we use $\Ccal$ to denote the central fiber over $t=0$, i.e.
$$
\Ccal:=\Ecal \otimes _\Ocal \Ocal_B
$$
Similarly, for other sheaves $\Hcal$ on $X$, we directly use $\Hcal|_B$ to denote the restriction to $B\times \{0\}$. We also use $\Fcal$ to denote a coherent sheaf on $B$ in general.

If $\tau$ is a finite-length sheaf supported at $o\in B$ or at
$(o,0)\in X$, we write
$$
        l(\tau):=\dim_{\CBbb}H^0(\tau).
$$

Also for later use, if $\Gcal$ is a sheaf with an isolated singularity, we write $hd(\Gcal)$ and $depth(\Gcal)$ for the homological
dimension and the depth of stalk of $\Gcal$ at the singularity.

Finally, $\iota:B_0\hookrightarrow X$ denotes the natural inclusion. When
the central fiber $\Ccal$ is regarded as a sheaf on $X$, we write
$\iota_*\Ccal$.

\subsection*{Acknowledgments}
The author thanks Richard Thomas for helpful comments on an earlier version of this manuscript and for pointing out relevant references. He also thanks Song Sun for helpful discussions related to this work, Yang Li and Keshu Zhou for helpful comments, and Aleksander Doan, Richard Wentworth, and  Junsheng Zhang for their interest in this work. This work was partially supported by NSERC and the ECR Supplement.

\section{Rigid bubbling: an identity between two notions of algebraic multiplicity}\label{TwoNotionsOfM}
\subsection{Main results}
\begin{defi}
A family $\Ecal$ is said to \emph{degenerate to a rank-two torsion-free
sheaf with an isolated point singularity at the origin} if $\Ecal_t$ is
locally free of rank two for every $t\neq 0$, and the central fiber $\Ccal$ is a
rank-two torsion-free sheaf which is locally free away from the origin
but not locally free at the origin.

A rank-two torsion-free sheaf $\Fcal$ on $B$ with an isolated singularity
at the origin is called \emph{smoothable} if
$$
        \Fcal\cong \Ccal
$$
where $\Ccal$ is the central fiber of some such family $\Ecal$.

A subsheaf $\Gcal\subset \Fcal$ is called an \emph{elementary
modification} of $\Fcal$ at the origin if $\Fcal/\Gcal$ is supported at
the origin.
\end{defi}

\begin{rmk}
If a family degenerates to a rank-two torsion-free sheaf with an
isolated point singularity at the origin, then the family must be
reflexive; see Lemma~\ref{ReflexiveProperty}. In particular, it is flat
over $\Delta$.
\end{rmk}
 
Fix, from now on, a family $\Ecal$ degenerating to a rank-two torsion-free sheaf with an isolated point singularity at the origin. We attach to its central fiber $\Ccal$ the following two
finite-length invariants:
$$
        m^{\mathrm{alg}}
        :=
        l\bigl(\Ccal^{**}/\Ccal\bigr),
        \qquad
        m_0^{\mathrm{alg}}
        :=
        l\bigl(\Ecal xt^1(\Ccal,\Ocal_B)\bigr).
$$

The first invariant is motivated by gauge theory
\cite{Chen2025, DonaldsonKronheimer:90, GSTW:18}; we call it the
\emph{algebraic bubbling multiplicity} of the family at the origin. If
$m^{\mathrm{alg}}\neq 0$, we say that the family exhibits algebraic
bubbling of multiplicity $m^{\mathrm{alg}}$ at the origin.

The second invariant is less familiar from the analytic point of view,
but is standard in the algebraic study of singularities of reflexive
sheaves \cite{Hartshorne:1980}. In the present situation,
$$
        \Ecal xt^1(\Ccal,\Ocal_B)
        \cong
        \Ecal xt^1(\Ccal^{**},\Ocal_B).
$$
Hence
$$
        m_0^{\mathrm{alg}}
        =
        l\bigl(\Ecal xt^1(\Ccal^{**},\Ocal_B)\bigr),
$$
so $m_0^{\mathrm{alg}}$ depends only on the reflexive hull
$\Ccal^{**}$ and measures its defect of local freeness at the origin.

\begin{thm}[Bubbling rigidity]\label{thm:bubbling-rigidity}
Suppose $\Ecal$ is a family degenerating to a rank-two torsion-free sheaf
with an isolated point singularity at the origin. Then
$$
m_0^{\mathrm{alg}}
=
2m^{\mathrm{alg}}.
$$
Furthermore,
$$
m_0^{\mathrm{alg}}\equiv 0 \pmod{2},
\qquad
m^{\mathrm{alg}}>0.
$$
In particular, $\Ccal$ is not reflexive, and its reflexive hull
$\Ccal^{**}$ is not locally free at the origin.
\end{thm}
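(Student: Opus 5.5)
The plan is to work on the fourfold $X=B\times\Delta$ and dualize the exact sequence
$$
0\to\Ecal\xrightarrow{\,t\,}\Ecal\to\iota_*\Ccal\to 0 .
$$
By Lemma~\ref{ReflexiveProperty}, $\Ecal$ is reflexive and flat over $\Delta$, so this sequence is exact. Since the fibers $\Ecal_t$ for $t\neq0$ are locally free, and $\Ccal$ is locally free away from $o$, $\Ecal$ is locally free away from the point $(o,0)$. Hence every $\Ecal xt^i_X(\Ecal,\Ocal_X)$ with $i\ge1$ has finite length, supported at $(o,0)$.

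A rank-two reflexive sheaf on the smooth fourfold $X$ has depth $\ge 2$, hence $hd\le 2$, so $\Ecal xt^{3}_X(\Ecal,\Ocal_X)=\Ecal xt^4_X(\Ecal,\Ocal_X)=0$. I will use the standard identification
$$
\Ecal xt^{i+1}_X(\iota_*\Ccal,\Ocal_X)\cong \iota_*\Ecal xt^{i}_B(\Ccal,\Ocal_B),
$$
which holds because $B_0$ is a Cartier divisor with trivial normal bundle. Applying $\Hcal om(-,\Ocal_X)$ then gives the long exact sequence
$$
0\to\Ecal^*\xrightarrow{t}\Ecal^*\to\Ccal^*\to\Ecal xt^1(\Ecal,\Ocal)\xrightarrow{t}\Ecal xt^1(\Ecal,\Ocal)\to\Ecal xt^1_B(\Ccal,\Ocal_B)\to\Ecal xt^2(\Ecal,\Ocal)\xrightarrow{t}\Ecal xt^2(\Ecal,\Ocal)\to\Ecal xt^2_B(\Ccal,\Ocal_B)\to 0 .
$$

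From this sequence I extract three counts.

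\emph{(a) The kernel of $t$ on $\Ecal xt^1$.} In rank two, $\Ecal^*\cong\Ecal\otimes(\det\Ecal)^{-1}$. This isomorphism holds on the locally free locus, and both sides are reflexive, so it extends across the codimension-four point. Hence $\Ecal^*/t\Ecal^*\cong\Ccal\otimes L$, where $L=(\det\Ecal)^{-1}|_B$. Similarly $\Ccal^*\cong\Ccal^{**}\otimes L$. Therefore the kernel of $t$ on $\Ecal xt^1(\Ecal,\Ocal)$ has length $l(\Ccal^{**}/\Ccal)=m^{\mathrm{alg}}$. For a finite-length module $M$ with an endomorphism $t$, one has $l(\ker t)=l(\operatorname{coker}t)$. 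So $l\bigl(\Ecal xt^1(\Ecal,\Ocal)/t\bigr)=m^{\mathrm{alg}}$.

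\emph{(b) The cokernel of $t$ on $\Ecal xt^2$.} Set $\tau=\Ccal^{**}/\Ccal$. Since $\Ccal^{**}$ is reflexive on a threefold, $hd(\Ccal^{**})\le1$. Since $\tau$ is a point sheaf in dimension three, $\Ecal xt^i(\tau,\Ocal_B)=0$ for $i<3$. The sequence $0\to\Ccal\to\Ccal^{**}\to\tau\to0$ then gives two things. First, it gives the isomorphism $\Ecal xt^1_B(\Ccal,\Ocal_B)\cong\Ecal xt^1_B(\Ccal^{**},\Ocal_B)$ stated before the theorem. Second, it gives $\Ecal xt^2_B(\Ccal,\Ocal_B)\cong\Ecal xt^3_B(\tau,\Ocal_B)$, which has length $l(\tau)=m^{\mathrm{alg}}$ by local duality. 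Thus the cokernel of $t$ on $\Ecal xt^2(\Ecal,\Ocal)$ has length $m^{\mathrm{alg}}$, and so does its kernel.

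\emph{(c) Conclusion of the identity.} Reading off the middle of the long exact sequence,
$$
m_0^{\mathrm{alg}}=l\bigl(\Ecal xt^1_B(\Ccal,\Ocal_B)\bigr)=l\bigl(\operatorname{coker}(t|_{\Ecal xt^1})\bigr)+l\bigl(\ker(t|_{\Ecal xt^2})\bigr)=m^{\mathrm{alg}}+m^{\mathrm{alg}}=2m^{\mathrm{alg}} .
$$
This proves the identity and the parity statement $m_0^{\mathrm{alg}}\equiv 0 \pmod 2$.

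For positivity, suppose $m^{\mathrm{alg}}=0$. Then $t$ is surjective on both $\Ecal xt^1(\Ecal,\Ocal)$ and $\Ecal xt^2(\Ecal,\Ocal)$. Since $t$ lies in the maximal ideal at $(o,0)$, Nakayama's lemma forces both modules to vanish. Then $hd(\Ecal)=0$, so $\Ecal$ is locally free, and hence $\Ccal$ is locally free, contradicting the hypothesis. So $m^{\mathrm{alg}}>0$, and therefore $\Ccal\neq\Ccal^{**}$. Moreover $m_0^{\mathrm{alg}}=2m^{\mathrm{alg}}>0$ says $\Ecal xt^1(\Ccal^{**},\Ocal_B)\neq0$, so $\Ccal^{**}$ is not locally free at the origin.

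The main obstacle I expect is the bookkeeping in step (a), namely identifying the cokernel of $\Ecal^*\to\Ccal^*$ with $\Ccal^{**}/\Ccal$. This requires two checks: that the self-duality $\Ecal^*\cong\Ecal\otimes\det^{-1}$ restricts compatibly to the central fiber, and that the map $\Ccal\otimes L\to\Ccal^{**}\otimes L$ it induces is the natural inclusion. I will verify both away from the origin, where everything is locally free, and extend using that all sheaves involved are torsion-free and that the relevant maps are determined off a point.
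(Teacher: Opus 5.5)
Your argument is correct, and its skeleton is the paper's. Both use the same long exact sequence for multiplication by $t$. Both use the equality of kernel and cokernel lengths of $t$ on the finite-length sheaves $\Ecal xt^i(\Ecal,\Ocal)$. Both identify $\Ecal xt^2_B(\Ccal,\Ocal_B)\cong\Ecal xt^3_B(\Ccal^{**}/\Ccal,\Ocal_B)$, of length $m^{\mathrm{alg}}$. You obtain the sequence by dualizing $0\to\Ecal\xrightarrow{t}\Ecal\to\iota_*\Ccal\to 0$ and using the change-of-rings isomorphism $\Ecal xt^{i+1}_X(\iota_*\Ccal,\Ocal_X)\cong\iota_*\Ecal xt^i_B(\Ccal,\Ocal_B)$. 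The paper instead applies $\Hcal om(\Ecal,-)$ to $0\to\Ocal\xrightarrow{t}\Ocal\to\iota_*\Ocal_B\to0$ and restricts a locally free resolution (Lemma~\ref{SameExt}). The two procedures produce the same sequence.

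The genuine differences are in two auxiliary inputs.

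First, the paper proves $m^{\mathrm{alg}}=l(\Kcal er(t|_{\Ecal xt^1(\Ecal,\Ocal)}))$ as a separate result, Lemma~\ref{lem:first-multiplicity}. It does this via a presentation of $\Ecal^*$, its dual sequence, and a $\Tcal or_1$ computation. You read the same identity off the $\Hcal om$-terms at the start of your sequence, which is shorter. The rank-two self-duality $\Ecal^*\cong\Ecal\otimes(\det\Ecal)^{-1}$ enters at exactly the same point in both proofs. It converts the reflexive-hull defect of $\Ecal^*|_B$ into that of $\Ccal$, and it is what fails in Example~\ref{BubbleNotSymmetric}.

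Second, the paper proves positivity by an independent argument that $\Ecal xt^1(\Ecal,\Ocal)\neq 0$ (Proposition~\ref{ExtensionNonzero}). You extract positivity from the identity itself. If $m^{\mathrm{alg}}=0$, then $t$ is surjective on both $\Ecal xt^1$ and $\Ecal xt^2$. Nakayama's lemma then kills both sheaves, and since the top nonvanishing $\Ecal xt^i(\Ecal,\Ocal)$ detects homological dimension, $\Ecal$ is locally free. Your version keeps everything inside one exact sequence, while the paper's arrangement isolates these two facts as standalone statements.

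Finally, the compatibility check you flag as the main obstacle is unnecessary. Exactness already gives an injection $\Ecal^*|_B\hookrightarrow\Ccal^*$ whose cokernel embeds in the finite-length sheaf $\Ecal xt^1(\Ecal,\Ocal)$. Since $\Ccal^*$ is reflexive, this forces the cokernel to be $(\Ecal^*|_B)^{**}/\Ecal^*|_B$. The abstract isomorphism $\Ecal^*|_B\cong\Ccal\otimes L$ then gives length $m^{\mathrm{alg}}$ without tracking any maps.
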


\begin{rmk}\label{Rmk: local c3 view}
\begin{itemize}
\item The invariant $l\bigl(\Ecal xt^1(\Fcal,\Ocal_B)\bigr)$
is the standard local invariant measuring an isolated singularity of a
rank-two reflexive sheaf on a smooth threefold, already appearing in
Hartshorne's work on stable reflexive sheaves
\cite{Hartshorne:1980}. In a global projective setting, this invariant is
one of the local terms entering the third Chern class formalism for
torsion-free sheaves. If a global degeneration has locally free rank-two
general fibers and its central fiber has a single isolated singular point,
then the global vanishing of the third Chern class can be read locally as
a cancellation between the contribution of the reflexive hull and the
zero-dimensional quotient $\Ccal^{**}/\Ccal$. If the central fiber has
several isolated singular points, the global vanishing gives only a
summed relation over all points, not a pointwise balance. Theorem
\ref{thm:bubbling-rigidity} supplies precisely this pointwise local
identity, without relying on a global Chern class argument. The local
third Chern class formalism for rank-two torsion-free sheaves with
isolated point singularities is developed systematically in a separate
paper (\cite{Chen26b}).

\item Perrin obtained related exact sequences for reduced limits of
rank-two vector bundles in a global moduli-theoretic setting on smooth
projective threefolds \cite{Perrin:2001}. In the isolated-point case,
under Perrin's hypotheses, his exact sequence implies a factor-two
length relation. The reducedness condition, however, is a condition on
the total family. Locally, if $t$ is the degeneration parameter, it
requires the higher Ext sheaves
$$
\Ecal xt^i_{\Ocal_X}(\Ecal,\Ocal_X),\qquad i>0,
$$
to be annihilated by $t$. We do not impose this condition, and the
local families considered here are in general non-reduced in this
sense; see Remark~\ref{Rmk:Nonreducedness} for concrete examples.
Moreover, Perrin's framework is global and purely algebraic, whereas the
present paper isolates the factor-two relation in a local point-bubbling
setting and allows families not covered by the global reduced-limit
framework.
\end{itemize}
\end{rmk}
The rigidity identity has several consequences and applications.

\begin{itemize}
\item Let $\Fcal$ be a rank-two reflexive sheaf on $B$, locally free on
$B\setminus o$. If
$$
        l\bigl(\Ecal xt^1(\Fcal,\Ocal_B)\bigr)\equiv 1 \pmod{2},
$$
then $\Fcal$ cannot occur as the reflexive hull of the central fiber of
an isolated point degeneration. Equivalently, there is no family $\Ecal$
degenerating as above such that $\Ccal^{**}\cong \Fcal.$ For example, let $a,b,c\geq 1$ and let $\Fcal$ be defined by
$$
0\to \Ocal_B
\xrightarrow{
\begin{pmatrix}
z_1^a\\
z_2^b\\
z_3^c
\end{pmatrix}}
\Ocal_B^{\oplus 3}
\to \Fcal
\to 0 .
$$
If $abc$ is odd, there is no family $\Ecal$ degenerating as above
with $\Ccal^{**}\cong \Fcal$; see
Corollary~\ref{SingularityThatCannotBeDeformed}. This obstruction is
specific to isolated point degenerations: Example~\ref{HiddenSingularities}
shows that such singularities may still occur in degenerations with
codimension-two bubbling.

\item A family degenerating as above cannot, locally near $(o,0)$, admit a
two-term free presentation
$$
        0\to \Ocal^{\oplus r}
        \to \Ocal^{\oplus r+2}
        \to \Ecal
        \to 0
$$
for any $r$; see Corollary~\ref{NotFreeQuotient}. This contrasts with
rank-two reflexive sheaves on smooth threefolds, which always admit such
a local presentation. The difference is that here the family lives on the
fourfold $X=B\times\Delta$, where reflexivity alone does not force
homological dimension one.

\item The identity also has a concrete interpretation in terms of
sections. Since $\Ccal^{**}/\Ccal\neq 0,$
the reflexive hull $\Ccal^{**}$ contains local sections near the
origin which are not sections of $\Ccal$. On the punctured ball these
are sections of $\Ccal$, but they do not extend across the origin as
sections of the central fiber. Thus not every section of the limiting
reflexive hull can arise as an algebraic limit of sections of the nearby
fibers. This is reflected in the global examples (see Section \ref{Section-Global appearance of the minimal singularity and bubbling}), where stable
rank-two bundles develop isolated point singularities. It also suggests
why one should not expect a uniform $L^2$ estimate for
all limiting sections when a sequence of HYM connections develops an
isolated point singularity. It would be interesting to characterize
analytically which sections do arise as limits.

\item The rank-two assumption is essential. In higher rank, a direct
analogue of this rigidity identity need not hold. There are examples in
which bubbling occurs for $\Ecal^*$ but not for $\Ecal$ (see Example \ref{BubbleNotSymmetric}), revealing an
asymmetry between a family and its dual. 
\end{itemize}

Throughout this section, $\Ecal$ denotes a family degenerating to a
rank-two torsion-free sheaf $\Ccal$ with an isolated point singularity at the
origin. 
\subsection{Elementary properties of the two algebraic multiplicities}
\begin{lem}\label{ReflexiveProperty}
The sheaf $\Ecal$ is reflexive. In particular, $\Ecal$ is flat over
$\Delta$.
\end{lem}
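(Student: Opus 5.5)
The plan is to first show that $\Ecal$ is locally free away from $(o,0)$, then that $\Ecal$ is torsion-free (which already gives flatness over $\Delta$), and finally that $\Ecal$ has depth at least two at $(o,0)$. Together these force $\Ecal=\Ecal^{**}$.

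\emph{Step 1: local freeness off $(o,0)$.} Take a point $x=(p,s)$ with either $s\neq 0$, or $s=0$ and $p\neq o$. By hypothesis the fiber through $x$ is locally free of rank two near $p$, so by Nakayama $\Ecal$ is generated near $x$ by two sections. This gives $0\to\Kcal\to\Ocal^{\oplus 2}\to\Ecal\to 0$ near $x$. Tensor with $\Ocal_{B_c}$ for every $c\neq 0$ close to $s$. Then $\Ocal_{B_c}^{\oplus 2}\to\Ecal_c$ is a surjection between free modules of rank two, hence an isomorphism. So the image of $\Kcal$ in $\Ocal_{B_c}^{\oplus 2}$ vanishes, which means the sections of $\Kcal\subset\Ocal^{\oplus 2}$ vanish on a dense set of hypersurfaces $\{t=c\}$. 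Since $\Ocal^{\oplus2}$ is torsion-free, $\Kcal=0$. Hence $\Ecal$ is locally free on $X\setminus\{(o,0)\}$.

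\emph{Step 2: torsion-freeness and flatness.} Let $\Tcal\subset\Ecal$ be the torsion subsheaf. By Step 1 it is supported at $(o,0)$, so it has finite length. Put $\Ecal'=\Ecal/\Tcal$; it is torsion-free, so $t$ is a nonzerodivisor on it and $Tor_1(\Ecal',\Ocal_B)=0$. Tensoring $0\to\Tcal\to\Ecal\to\Ecal'\to0$ with $\Ocal_B$ therefore gives an injection $\Tcal/t\Tcal\hookrightarrow\Ccal$. The left side is a finite-length sheaf, and by Nakayama it is nonzero if $\Tcal\neq0$. This contradicts the torsion-freeness of $\Ccal$, so $\Tcal=0$. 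In particular $t$ is a nonzerodivisor on $\Ecal$, i.e.\ $\Ecal$ is flat over $\Delta$.

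\emph{Step 3: reflexivity.} Set $x_0=(o,0)$.
\begin{itemize}
\item Since $t$ is $\Ecal$-regular and $\Ecal/t\Ecal=\Ccal$ is torsion-free on the threefold $B$, we have $depth_{o}\Ccal\ge1$, hence $depth_{x_0}\Ecal\ge2$.
\item Equivalently, the local cohomology groups $H^0_{x_0}(\Ecal)$ and $H^1_{x_0}(\Ecal)$ vanish.
\item Now $\Ecal\to\Ecal^{**}$ is injective because $\Ecal$ is torsion-free, and it is an isomorphism off $x_0$ by Step 1. So $\Qcal=\Ecal^{**}/\Ecal$ is supported at $x_0$.
\item The local cohomology sequence of $0\to\Ecal\to\Ecal^{**}\to\Qcal\to0$ contains $H^0_{x_0}(\Ecal^{**})\to \Qcal=H^0_{x_0}(\Qcal)\to H^1_{x_0}(\Ecal)=0$.
\item The first term is $0$ because the dual sheaf $\Ecal^{**}$ is torsion-free.
\end{itemize}
Hence $\Qcal=0$ and $\Ecal$ is reflexive.

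The main point needing care is Step 2. Flatness is not automatic: a priori $\Ecal$ could carry embedded point torsion at $(o,0)$. The argument rules this out only through the torsion-freeness of $\Ccal$, via the injection $\Tcal/t\Tcal\hookrightarrow\Ccal$. The depth count in Step 3 is then routine.
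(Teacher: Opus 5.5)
Your proof is correct. Your Step 2 is the paper's torsion-freeness argument: $\Tcal/t\Tcal$ injects into $\Ccal$, and Nakayama finishes it. Flatness then comes from the absence of $t$-torsion, as in the paper.

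The difference is in the reflexivity step. The paper tensors $0\to\Ecal\to\Ecal^{**}\to\tau\to 0$ with $\Ocal_B$. Since $\Ecal^{**}$ has no $t$-torsion, this embeds $\Tcal or_1(\Ocal_B,\tau)=\Kcal er(t:\tau\to\tau)$ into the torsion-free sheaf $\Ccal$. Hence $t$ is injective on the finite-length sheaf $\tau$; it is also nilpotent there, so $\tau=0$.

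You use the same input: $t$ is regular on $\Ecal$ and $\Ecal/t\Ecal=\Ccal$ is torsion-free. You convert it into $\mathrm{depth}_{x_0}\Ecal\geq 2$ and conclude from the local cohomology sequence, which amounts to running Serre's $S_2$ criterion. The paper's route is more hands-on, needing only Tor and nilpotence of $t$ on $\tau$. Yours names the mechanism as a depth bound, and the paper itself later argues that way when ruling out two-term free presentations of $\Ecal$.

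Your Step 1 adds something the paper does not supply. The paper uses without comment that $\Ecal$ is locally free off $(o,0)$. That fact has to come from the fiberwise hypotheses before flatness is available, since the torsion argument itself relies on it. Your Nakayama argument does this cleanly: the kernel of $\Ocal^{\oplus 2}\to\Ecal$ restricts to zero on every nearby fiber $\{t=c\}$, $c\neq 0$, and so vanishes.
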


\begin{proof}
We first show that $\Ecal$ is torsion-free. Let
$\kappa\subset \Ecal$ be the torsion subsheaf, and set
$\Qcal=\Ecal/\kappa$. Since $\Ecal$ is locally free away from $(o,0)$,
the sheaf $\kappa$ is supported at $(o,0)$ and hence has finite length.
Tensoring
$$
        0\to \kappa\to \Ecal\to \Qcal\to 0
$$
with $\Ocal_B$ gives
$$
        \Tcal or_1(\Ocal_B,\Qcal)
        \to
        \kappa/t\kappa
        \to
        \Ccal .
$$
Since $\Qcal$ is torsion-free, multiplication by $t$ is injective on
$\Qcal$, and hence
$$
        \Tcal or_1(\Ocal_B,\Qcal)=0 .
$$
Thus $\kappa/t\kappa$ injects into $\Ccal$. If $\kappa\neq 0$, then
$\kappa/t\kappa\neq 0$ by Nakayama's lemma. This gives a nonzero
finite-length subsheaf of the torsion-free sheaf $\Ccal$, a
contradiction. Therefore $\kappa=0$, and $\Ecal$ is torsion-free.

Now set
$$
        \tau=\Ecal^{**}/\Ecal .
$$
Since $\Ecal$ is locally free away from $(o,0)$, the sheaf $\tau$ is
supported at $(o,0)$ and has finite length. Tensoring
$$
        0\to \Ecal\to \Ecal^{**}\to \tau\to 0
$$
with $\Ocal_B$ gives an injection
$$
        \Tcal or_1(\Ocal_B,\tau)\hookrightarrow \Ccal,
$$
because $\Ecal^{**}$ is torsion-free and hence
$$
        \Tcal or_1(\Ocal_B,\Ecal^{**})=0 .
$$
The sheaf $\Tcal or_1(\Ocal_B,\tau)$ has finite length, so its injection
into the torsion-free sheaf $\Ccal$ forces
$$
        \Tcal or_1(\Ocal_B,\tau)=0 .
$$
Equivalently, multiplication by $t$ on $\tau$ is injective. On the other
hand, $\tau$ is supported at $(o,0)$, so multiplication by $t$ acts
nilpotently on $\tau$. Hence $\tau=0$. Therefore $\Ecal=\Ecal^{**}$ is
reflexive.

Finally, $\Ecal$ has no $t$-torsion. Since $\Delta$ is a smooth curve,
this implies that $\Ecal$ is flat over $\Delta$.
\end{proof}

Now we discuss some elementary properties of the algebraic
bubbling multiplicity of the family $\Ecal$. Recall that this is the
finite-length invariant of the central fiber $\Ccal$ defined by
$$
        m^{\mathrm{alg}}
        :=
        l\bigl(\Ccal^{**}/\Ccal\bigr).
$$

\begin{lem}\label{lem:first-multiplicity}
The algebraic bubbling multiplicity satisfies
$$
        m^{\mathrm{alg}}
        =
        l\bigl(\Kcal er(t:\Ecal xt^1(\Ecal, \Ocal) \to \Ecal xt^1(\Ecal, \Ocal))\bigr).
$$
\end{lem}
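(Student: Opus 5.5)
The plan is to dualize the defining short exact sequence of the central fiber on $X$, identify the resulting connecting term via duality for the divisor $B_0\subset X$, and then use the rank-two self-duality $\Ecal^*\cong\Ecal\otimes\det(\Ecal)^{-1}$ to convert the answer into $l(\Ccal^{**}/\Ccal)$.

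\textbf{Step 1: dualize.} By Lemma~\ref{ReflexiveProperty}, $\Ecal$ is flat over $\Delta$, so we have the exact sequence
$$
0\to \Ecal\xrightarrow{t}\Ecal\to \iota_*\Ccal\to 0 .
$$
Apply $\Hcal om_{\Ocal}(-,\Ocal)$ to it. Since $\Hcal om(\iota_*\Ccal,\Ocal)=0$, this gives
$$
0\to \Ecal^*\xrightarrow{t}\Ecal^*\to \Ecal xt^1(\iota_*\Ccal,\Ocal)\to \Ecal xt^1(\Ecal,\Ocal)\xrightarrow{t}\Ecal xt^1(\Ecal,\Ocal).
$$
Hence there is a short exact sequence
$$
0\to \Ecal^*|_B\to \Ecal xt^1(\iota_*\Ccal,\Ocal)\to \Kcal er\bigl(t:\Ecal xt^1(\Ecal,\Ocal)\to\Ecal xt^1(\Ecal,\Ocal)\bigr)\to 0 .
$$

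\textbf{Step 2: identify the middle term.} Since $B_0$ is a Cartier divisor with trivial normal bundle, the standard adjunction (or a Koszul computation using $0\to\Ocal\xrightarrow{t}\Ocal\to\iota_*\Ocal_B\to0$) gives $\Ecal xt^1_X(\iota_*\Ccal,\Ocal_X)\cong \iota_*\Hcal om_B(\Ccal,\Ocal_B)=\iota_*\Ccal^*$. I would also check that under this identification the map $\Ecal^*|_B\to \Ccal^*$ is the natural restriction map. In particular $\Ecal^*|_B$ injects into the torsion-free sheaf $\Ccal^*$, and the kernel of $t$ has length $l(\Ccal^*/\Ecal^*|_B)$. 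This length is finite, because everything is locally free off $(o,0)$.

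\textbf{Step 3: rank-two self-duality.} Let $L=\det\Ecal=(\Lambda^2\Ecal)^{**}$. This is a line bundle, since $X$ is smooth and $L$ is reflexive of rank one. The wedge pairing $\Ecal\otimes\Ecal\to L$ induces a map $\Ecal\to\Ecal^*\otimes L$. This map is an isomorphism off $(o,0)$, which has codimension $4$, and both sides are reflexive, so $\Ecal^*\cong\Ecal\otimes L^{-1}$. Restricting to $B$ gives $\Ecal^*|_B\cong \Ccal\otimes L|_B^{-1}$. On $B$ the same argument gives $\Ccal^*\cong \Ccal^{**}\otimes L|_B^{-1}$, because $\det\Ccal^{**}=L|_B$ off the origin and hence everywhere.

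\textbf{Step 4: compatibility (the main obstacle).} The remaining point is that, under these identifications, the inclusion $\Ecal^*|_B\hookrightarrow \Ccal^*$ becomes $\Ccal\otimes L|_B^{-1}\hookrightarrow\Ccal^{**}\otimes L|_B^{-1}$. My argument is that both are injective maps of torsion-free sheaves which agree with the canonical identification on $B\setminus o$, where everything is locally free. A map into the reflexive sheaf $\Ccal^{**}\otimes L|_B^{-1}$ is determined by its restriction off a codimension-three point, so the two maps agree. Therefore
$$
\Ccal^*/\Ecal^*|_B\cong (\Ccal^{**}/\Ccal)\otimes L|_B^{-1},
$$
and taking lengths yields $m^{\mathrm{alg}}=l\bigl(\Kcal er(t)\bigr)$.
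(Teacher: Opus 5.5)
Your proof is correct, but it takes a different route from the paper. The paper chooses a free presentation $0\to\Gcal\to\Ocal^n\to\Ecal^*\to 0$ and dualizes it, which embeds $\Ecal\subset\Ocal^n$ with cokernel $\Hcal\subset\Gcal^*$ and $\Gcal^*/\Hcal\cong\Ecal xt^1(\Ecal^*,\Ocal)$. It then restricts to $B$. First, $\Ccal^{**}/\Ccal$ is identified with the torsion of $\Hcal|_B$, via the saturation of $\Ccal$ in $\Ocal_B^n$. Second, that torsion is identified with $\Tcal or_1(\Ecal xt^1(\Ecal^*,\Ocal),\Ocal_B)$, using a cited lemma that $\Gcal^*|_B$ is torsion-free. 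Rank two enters only at the start, to replace $\Ecal xt^1(\Ecal^*,\Ocal)$ by $\Ecal xt^1(\Ecal,\Ocal)$.

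You instead dualize $0\to\Ecal\xrightarrow{t}\Ecal\to\iota_*\Ccal\to 0$ directly and use $\Ecal xt^1_X(\iota_*\Ccal,\Ocal)\cong\iota_*\Ccal^*$. This needs no auxiliary presentation and no torsion-freeness lemma for restrictions of reflexive sheaves.

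Your Steps 1--2 in fact prove something valid in any rank. Since $\Ecal^*|_B$ injects into the reflexive sheaf $\Ccal^*$ with finite-length cokernel, $\Ccal^*\cong(\Ecal^*|_B)^{**}$. So the kernel of $t$ on $\Ecal xt^1(\Ecal,\Ocal)$ is $(\Ecal^*|_B)^{**}/\Ecal^*|_B$, which is the bubbling of the dual family $\Ecal^*$. The paper's argument, before it invokes rank two, proves the mirror statement $m^{\mathrm{alg}}(\Ecal)=l\bigl(\Kcal er(t:\Ecal xt^1(\Ecal^*,\Ocal)\to\Ecal xt^1(\Ecal^*,\Ocal))\bigr)$. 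In both versions rank two serves only to swap $\Ecal$ and $\Ecal^*$. Your formulation makes the asymmetry in Example~\ref{BubbleNotSymmetric} transparent: there $\Ecal xt^1(\Ecal,\Ocal)\cong\Ocal/(z_1,z_2,z_3,t)$ detects the length-one bubbling of $\Ecal^*$, not of $\Ecal$.

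Finally, your Step 4 is easier than you suggest, and no compatibility check is needed. Any injection of the torsion-free sheaf $\Ecal^*|_B\cong\Ccal\otimes L|_B^{-1}$ into a reflexive sheaf, with cokernel supported at a point, identifies the target with $(\Ccal\otimes L|_B^{-1})^{**}$ compatibly with the canonical map. The cokernel is therefore automatically $(\Ccal^{**}/\Ccal)\otimes L|_B^{-1}$.
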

      
\begin{proof}
Choose finitely many local generators for $\Ecal^*$ near $(o,0)$. This
gives an exact sequence
$$
        0\rightarrow \Gcal \rightarrow \Ocal^n
        \rightarrow \Ecal^* \rightarrow 0 .
$$
Applying $\Hcal om_{\Ocal}(\bullet,\Ocal)$, we obtain
$$
        0\rightarrow \Ecal \rightarrow \Ocal^n
        \rightarrow \Gcal^*
        \rightarrow \Ecal xt^1(\Ecal^*,\Ocal)
        \rightarrow 0 .
$$
Set
$$
        \tau=\Ecal xt^1(\Ecal^*,\Ocal).
$$
After shrinking near $(o,0)$ if necessary, fix a trivialization of
$\det\Ecal$. Since $\Ecal$ has rank two, this identifies
$\Ecal^*$ with $\Ecal$. Hence
$$
        \tau\cong \Ecal xt^1(\Ecal,\Ocal).
$$

Let $\Hcal$ be the image sheaf of the map $\Ocal^n\to \Gcal^*$. Since
$\Hcal\subset \Gcal^*$, the sheaf $\Hcal$ has no $t$-torsion. Therefore
restricting
$$
        0\to \Ecal\to \Ocal^n\to \Hcal\to 0
$$
to the central fiber gives an exact sequence
$$
        0\to \Ccal \rightarrow \Ocal_B^n
        \rightarrow \left.\Hcal\right|_B \rightarrow 0 .
$$
Let $\operatorname{tor}(\left.\Hcal\right|_B)$ be the torsion of $\left.\Hcal\right|_B$. The inverse image of
$\operatorname{tor}(\left.\Hcal\right|_B)$ in $\Ocal_B^n$ is the
saturation of $\Ccal$ in $\Ocal_B^n$. Since this saturation is
reflexive and agrees with $\Ccal$ away from the origin, it is
$\Ccal^{**}$. Thus
$$
        \operatorname{tor}(\left.\Hcal\right|_B)
        \cong
        \Ccal^{**}/\Ccal,
$$
and hence
$$
        m^{\mathrm{alg}}
        =
        l\bigl(\operatorname{tor}(\left.\Hcal\right|_B)\bigr).
$$

Restricting
$$
        0\to \Hcal\to \Gcal^*\to \tau\to 0
$$
to $B$ gives
$$
        0\rightarrow \Tcal or_1(\tau,\Ocal_B)
        \rightarrow \left.\Hcal\right|_B
        \rightarrow \left.\Gcal^*\right|_B
        \rightarrow \left.\tau\right|_B
        \rightarrow 0 .
$$
Since $\Gcal^*$ is reflexive on the smooth fourfold $X$, its restriction
$\left.\Gcal^*\right|_B$ is torsion-free (\cite[Lemma $3.23$]{ChenSun:20a}). Therefore
$$
        \operatorname{tor}(\left.\Hcal\right|_B)
        \cong
        \Tcal or_1(\tau,\Ocal_B).
$$

Finally, by definition
$$
        \Tcal or_1(\tau,\Ocal_B)
        =
        \Kcal er(t:\tau\rightarrow\tau).
$$
Combining this with
$\tau\cong \Ecal xt^1(\Ecal,\Ocal)$, we obtain
$$
        m^{\mathrm{alg}}
        =
        l\bigl(
        \Kcal er(t:\Ecal xt^1(\Ecal,\Ocal)
        \to \Ecal xt^1(\Ecal,\Ocal))
        \bigr).
$$
This proves the lemma.
\end{proof}

    The positivity of $m^{\mathrm{alg}}$ follows from the following
nonvanishing.

\begin{prop}\label{ExtensionNonzero}
$\Ecal xt^1(\Ecal,\Ocal)\neq 0$. In particular, $m^{\mathrm{alg}}>0$.
\end{prop}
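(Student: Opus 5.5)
We argue by contradiction: suppose $\Ecal xt^1(\Ecal,\Ocal)=0$ near $(o,0)$. The aim is to show that $\Ccal$ would then be locally free at the origin, contrary to the hypothesis. Once the nonvanishing is established, positivity of $m^{\mathrm{alg}}$ follows from Lemma~\ref{lem:first-multiplicity}. Indeed, $\tau=\Ecal xt^1(\Ecal,\Ocal)$ is supported at $(o,0)$, since $\Ecal$ is locally free elsewhere, so $\tau$ has finite length. Multiplication by $t$ is therefore nilpotent on $\tau$, and a nonzero $\tau$ has $\Kcal er(t)\neq 0$. Hence $m^{\mathrm{alg}}=l(\Kcal er(t|_\tau))>0$.

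For the main step we use local duality on the regular local ring $R=\Ocal_{X,(o,0)}$ of dimension four, with $M=\Ecal_{(o,0)}$ reflexive by Lemma~\ref{ReflexiveProperty}. The argument runs as follows.

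\textbf{Step 1: depth of $M$.} Since $M$ is reflexive on a regular ring, it is a second syzygy, so $\operatorname{depth} M\geq 2$ and $\Ecal xt^i(\Ecal,\Ocal)$ vanishes for $i\geq 3$ at the point. We have $\Ecal xt^4=0$ because $\operatorname{depth} M\geq 1$, and $\Ecal xt^3=0$ because $\operatorname{depth} M\geq 2$; this is Auslander--Buchsbaum combined with local duality, under which $\Ecal xt^{4-i}(M,R)$ of finite length is dual to $H^i_{\mathfrak m}(M)$.

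\textbf{Step 2: the only remaining Ext.} The sheaves $\Ecal xt^i(\Ecal,\Ocal)$ for $i>0$ are supported at the point. So if also $\Ecal xt^1=0$, then $\Ecal xt^2$ is the only possibly nonzero one, and it is dual to $H^2_{\mathfrak m}(M)$. To kill it we use flatness over $\Delta$. The element $t$ is a nonzerodivisor on $M$, and $M/tM=\Ccal_o$ is torsion-free on the three-dimensional ball, hence has depth $\geq 1$. Consequently $\operatorname{depth} M=\operatorname{depth}\Ccal_o+1\geq 2$, which gives no new information. We therefore need a different argument: the long exact sequence of Ext groups for $0\to M\xrightarrow{t}M\to \Ccal_o\to 0$.

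\textbf{Step 3: the Ext sequence.} Applying $\Hcal om(\cdot,\Ocal)$ gives
$$
\Ecal xt^i(M,R)\xrightarrow{t}\Ecal xt^i(M,R)\to \Ecal xt^{i+1}_R(\Ccal_o,R)\to \Ecal xt^{i+1}(M,R)\xrightarrow{t}\cdots .
$$
We also have $\Ecal xt^{i+1}_R(\Ccal_o,R)\cong \Ecal xt^i_{\Ocal_B}(\Ccal_o,\Ocal_B)$. Suppose $\Ecal xt^1(M,R)=0$. Then the $t$-cokernel of $\Ecal xt^1$ vanishes and $\Ecal xt^1_B(\Ccal,\Ocal_B)$ injects into $\Ecal xt^2(M,R)$, so $\Ecal xt^1_B(\Ccal,\Ocal_B)\cong \Kcal er(t|_{\Ecal xt^2(M,R)})$. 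Similarly, $\Ecal xt^2_B(\Ccal,\Ocal_B)$ surjects from the $t$-cokernel of $\Ecal xt^2(M,R)$, and $\Ecal xt^3=0$. For the finite-length module $N=\Ecal xt^2(M,R)$, the kernel and cokernel of $t$ have equal length. Thus
$$
l\bigl(\Ecal xt^1_B(\Ccal,\Ocal_B)\bigr)=l\bigl(\Ecal xt^2_B(\Ccal,\Ocal_B)\bigr).
$$

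\textbf{Step 4: the contradiction.} This is the obstacle. Since $\Ccal$ is torsion-free on a threefold, we have $\operatorname{hd}\Ccal\leq 2$. The plan is to show that $\Ecal xt^2_B(\Ccal,\Ocal_B)\neq 0$ exactly when $\Ccal$ is non-reflexive, and then derive a contradiction by tracking ranks.

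A more direct alternative avoids this. By local duality, $\Ecal xt^1(M,R)=0$ together with $\Ecal xt^3=\Ecal xt^4=0$ forces $H^i_{\mathfrak m}(M)=0$ for $i\neq 2,4$. We then combine this with the $t$-sequence of local cohomology. Since $H^1_{\mathfrak m}(M)=0$ and $H^0_{\mathfrak m}(\Ccal_o)=0$, the sequence gives $H^0_{\mathfrak m}(\Ccal_o)\hookrightarrow H^1_{\mathfrak m}(M)=0$, and also
$$
0\to H^1_{\mathfrak m}(\Ccal_o)\to H^2_{\mathfrak m}(M)\xrightarrow{t} H^2_{\mathfrak m}(M)\to H^2_{\mathfrak m}(\Ccal_o)\to H^3_{\mathfrak m}(M)=0 .
$$
Thus $H^1_{\mathfrak m}(\Ccal_o)$ and $H^2_{\mathfrak m}(\Ccal_o)$ have equal length, namely the length of the kernel and cokernel of $t$ on the finite-length module $H^2_{\mathfrak m}(M)$.

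Now use the rank-two hypothesis: $\det\Ecal$ is trivial, so $M\cong M^*$. The final step would then show that this self-duality forces the finite-length part of $H^2_{\mathfrak m}(M)$ to be $t$-torsion-free, hence zero. From that, $H^1_{\mathfrak m}(\Ccal_o)=H^2_{\mathfrak m}(\Ccal_o)=0$, so $\operatorname{depth}\Ccal_o=3$ and $\Ccal$ is locally free, which is a contradiction.

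The weakest point of the plan is exactly this last use of the rank-two self-duality. The fallback is to establish Step 4 directly, namely that $\Ecal xt^2_B(\Ccal,\Ocal_B)\neq 0$ exactly when $\Ccal$ is non-reflexive, and finish via the equal-length identity of Step 3.
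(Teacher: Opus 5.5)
\textbf{There is a genuine gap, at the one step that uses rank two.} Your Steps 1--3 and the local-cohomology computation are correct. However, none of them uses the rank-two hypothesis: they hold for any reflexive $M$ that is locally free off the point. All they yield is $l(H^1_{\mathfrak m}(\Ccal_o))=l(H^2_{\mathfrak m}(\Ccal_o))$, equivalently $l(\Ecal xt^1(\Ccal,\Ocal_B))=l(\Ecal xt^2(\Ccal,\Ocal_B))=l(\Ccal^{**}/\Ccal)$, and this is not contradictory.

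The rank-three dual family $\Ecal^*$ of Example~\ref{BubbleNotSymmetric} shows this concretely:
\begin{itemize}
\item it satisfies $\Ecal xt^1(\Ecal^*,\Ocal)\cong\Ecal xt^3(\Ocal/\mathfrak m_X,\Ocal)=0$;
\item it is not locally free;
\item it has $H^2_{\mathfrak m}\cong\CBbb$ with $t$ acting by zero;
\item its central fiber $\Kcal er(\Ocal_B^{\oplus 3}\to\mathfrak m_B)\oplus\mathfrak m_B$ satisfies your equal-length identity with all three lengths equal to one.
\end{itemize}
So your fallback cannot close the argument. Its key claim, that $\Ecal xt^2_B(\Ccal,\Ocal_B)\neq0$ exactly when $\Ccal$ is non-reflexive, is true, but it gives no contradiction in the non-reflexive case. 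The decisive sentence, ``self-duality forces the finite-length part of $H^2_{\mathfrak m}(M)$ to be $t$-torsion-free,'' is asserted with no mechanism. As written, the proof is missing exactly the step that matters.

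\textbf{The missing idea.} Use $M\cong M^*$ to transfer the hypothesis, rather than to study $t$ acting on $H^2_{\mathfrak m}(M)$. From $\Ecal xt^1(M,R)=0$ you get $\Ecal xt^1(M^*,R)=0$.
\begin{itemize}
\item Since $M^*$ is reflexive, $\operatorname{depth}M^*\geq2$, so it has a free resolution $0\to F_2\to F_1\to F_0\to M^*\to0$.
\item Dualize. The vanishing of $\Ecal xt^1(M^*,R)$ is exactly exactness of $0\to M\to F_0^*\to F_1^*\to F_2^*$.
\item Hence $M=M^{**}$ is a third syzygy, so $\operatorname{depth}M\geq3$. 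That is, $H^2_{\mathfrak m}(M)=0$ outright, with no reference to $t$.
\item Together with $H^3_{\mathfrak m}(M)\cong\Ecal xt^1(M,R)^\vee=0$, this gives $\operatorname{depth}M=4$. So $M$ is free, contradicting that $\Ccal$ is singular.
\end{itemize}

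This is the paper's argument in local-cohomology language. The paper obtains $hd(\Ecal)\leq1$ from the dualized resolution of $\Ecal^*$. It then takes a presentation $0\to\Ocal^r\xrightarrow{\phi}\Ocal^s\to\Ecal\to0$ with $\operatorname{coker}\phi^*=\Ecal xt^1(\Ecal,\Ocal)=0$, so $\phi$ splits.

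Your deduction of $m^{\mathrm{alg}}>0$ from the nonvanishing, via Lemma~\ref{lem:first-multiplicity} and nilpotence of $t$ on a finite-length sheaf, is correct and matches the paper.
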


\begin{proof}
Let $\tau=\Ecal xt^1(\Ecal,\Ocal)$. Since $\Ecal$ is locally free away
from $(o,0)$, the sheaf $\tau$ is supported at $(o,0)$. Suppose for
contradiction that $\tau=0$.

After shrinking near $(o,0)$ if necessary, fix a trivialization of
$\det\Ecal$. Since $\Ecal$ has rank two, this identifies $\Ecal^*$ with
$\Ecal$. Hence
$$
        \Ecal xt^1(\Ecal^*,\Ocal)=0.
$$
Moreover, since $\Ecal^*$ is reflexive on the smooth fourfold $X$, we have
$hd(\Ecal^*)\leq 2$. Choose a locally free resolution
$$
        0\to \Ocal^{a}\to \Ocal^{b}\to \Ocal^{c}\to \Ecal^*\to 0.
$$
Applying $\Hcal om_{\Ocal}(-,\Ocal)$ and using
$\Ecal xt^1(\Ecal^*,\Ocal)=0$ gives an exact sequence
$$
        0\to \Ecal\to \Ocal^{c}\to \Ocal^{b}\to \Ocal^{a}\to \Hcal\to 0,
$$
where $\Hcal=\Ecal xt^2(\Ecal^*,\Ocal)$ is supported at $(o,0)$. From this, we know 
$$
hd(\Ecal)\leq 1.
$$
Thus, near $(o,0)$, there is a locally free presentation
$$
        0\to \Ocal^{r}\xrightarrow{\phi}\Ocal^{s}\to \Ecal\to 0.
$$
Applying $\Hcal om_{\Ocal}(-,\Ocal)$ gives
$$
        \Ecal xt^1(\Ecal,\Ocal)\cong
        (\Ocal^{r})^*/\Img(\phi^*).
$$
Since $\Ecal xt^1(\Ecal,\Ocal)=0$, the map $\phi^*$ is surjective. Hence
$\phi$ is a split injection, and $\Ecal$ is locally free near $(o,0)$.
This contradicts the assumption that the central fiber $\Ccal$ is not
locally free at the origin. Therefore $\Ecal xt^1(\Ecal,\Ocal)\neq 0$.

Finally, by Lemma~\ref{lem:first-multiplicity},
$$
        m^{\mathrm{alg}}
        =
        l\bigl(\Kcal er(t:\tau\to\tau)\bigr).
$$
If $m^{\mathrm{alg}}=0$, then multiplication by $t$ on $\tau$ is
injective. Since $\tau$ is supported at $(o,0)$, multiplication by $t$ is
nilpotent. Hence $\tau=0$, contradicting the first part. Therefore
$m^{\mathrm{alg}}>0$.
\end{proof}
        
     The positivity of $m^{\mathrm{alg}}$ also puts a restriction on the total
sheaf $\Ecal$.

\begin{cor}\label{NotFreeQuotient}
The sheaf $\Ecal$ admits no presentation of the form
$$
        0\to \Ocal^r\to \Ocal^{r+2}\to \Ecal\to 0 
$$
near the singular point.
\end{cor}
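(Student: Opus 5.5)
We argue by contradiction, exactly as in the last step of the proof of Proposition~\ref{ExtensionNonzero}. Suppose that near $(o,0)$ there is a presentation
$$
        0\to \Ocal^r\xrightarrow{\phi}\Ocal^{r+2}\to \Ecal\to 0 .
$$
Applying $\Hcal om_{\Ocal}(-,\Ocal)$ gives
$$
        0\to \Ecal^*\to (\Ocal^{r+2})^*\xrightarrow{\phi^*}(\Ocal^r)^*\to \Ecal xt^1(\Ecal,\Ocal)\to 0,
$$
so $\tau:=\Ecal xt^1(\Ecal,\Ocal)\cong \Ocal^r/\Img(\phi^*)$. By Proposition~\ref{ExtensionNonzero}, $\tau\neq 0$, and since $\Ecal$ is locally free away from $(o,0)$, $\tau$ is supported at the single point $(o,0)$.

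We then bring in determinantal depth. The support of $\Ocal^r/\Img(\phi^*)$ is the degeneracy locus where the $r\times(r+2)$ matrix $\phi^*$ has rank $<r$. This locus is cut out by the ideal $I_r(\phi)$ of maximal minors. By the Eagon--Northcott (Macaulay) bound, every minimal prime of $I_r(\phi)$, if the ideal is proper, has height at most $(r+2)-r+1=3$. So a nonempty degeneracy locus has codimension at most $3$ in the fourfold $X$, hence dimension at least $1$. But it equals $\operatorname{Supp}\tau=\{(o,0)\}$, which has dimension $0$. This forces $I_r(\phi)$ to be the unit ideal near $(o,0)$. Then $\phi^*$ is surjective, so $\tau=0$, contradicting Proposition~\ref{ExtensionNonzero}.

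The main point to verify carefully is the identification $\operatorname{Supp}(\operatorname{coker}\phi^*)=V(I_r(\phi))$ (Fitting ideals) and the applicability of the Eagon--Northcott height bound in the local ring $\Ocal_{X,(o,0)}$. Both are standard. Alternatively, we could observe that such a presentation means $hd(\Ecal)\le 1$, so $\Ecal$ would be Cohen--Macaulay of depth $\ge 3$ at $(o,0)$. But $\tau\neq 0$ has finite length, and local duality would force $\Ecal xt^{1}(\Ecal,\Ocal)$ to vanish unless the depth is at most $3$. That gives no contradiction immediately, so the determinantal argument is the one to use.
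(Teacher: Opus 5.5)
Your argument is correct, but it takes a different route from the paper.

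\textbf{The paper's route.} The paper uses flatness of $\Ecal$ over $\Delta$ to restrict the presentation to the central fiber, obtaining $0\to\Ocal_B^r\to\Ocal_B^{r+2}\to\Ccal\to 0$. On the threefold $B$ this gives $hd(\Ccal)\le 1$, hence depth $\ge 2$. So the torsion-free sheaf $\Ccal$, locally free off the origin, is $S_2$ and therefore reflexive. That means $m^{\mathrm{alg}}=0$, contradicting Proposition~\ref{ExtensionNonzero}.

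\textbf{Your route.} You stay on the fourfold. You use $\operatorname{Supp}\tau=V(\mathrm{Fitt}_0\tau)=V(I_r(\phi))$ together with the Eagon--Northcott height bound for maximal minors. This shows the statement is really a determinantal codimension fact.

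You do not even need Proposition~\ref{ExtensionNonzero}. If $I_r(\phi)$ were the unit ideal at $(o,0)$, then $\phi$ would be split injective and $\Ecal$ free there. Moreover, $V(I_r(\phi))$ is exactly the non-locally-free locus of $\Ecal$. So any rank-two sheaf with $hd\le 1$ on a smooth fourfold has a non-locally-free locus that is either empty or of dimension $\ge 1$. This holds independently of flatness, the central fiber, or bubbling. In particular, a reflexive rank-two sheaf on $X$ with an isolated singularity must have $hd=2$, which makes precise the paper's remark that reflexivity on $X$ does not force $hd\le 1$.

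\textbf{Trade-off.} The paper's argument needs only Auslander--Buchsbaum and Serre's criterion, avoiding the Eagon--Northcott theorem. It also deliberately ties the corollary to the positivity $m^{\mathrm{alg}}>0$, which is why it sits right after Proposition~\ref{ExtensionNonzero}. Your argument explains why the obstruction exists at all.

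\textbf{The closing aside.} It is harmless but slightly misphrased. $hd(\Ecal)\le 1$ gives depth $\ge 3$, which is not Cohen--Macaulay unless $\Ecal$ is free. You are right, though, that depth and local duality alone give no contradiction here.
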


\begin{proof}
Assume that such a presentation exists. Since $\Ecal$ is flat over
$\Delta$, tensoring with $\Ocal_B$ gives an exact sequence
$$
        0\to \Ocal_B^r
        \xrightarrow{\Phi_0}
        \Ocal_B^{r+2}
        \to \Ccal
        \to 0 .
$$
Thus $hd(\Ccal)\leq 1$ and
$$
        depth(\Ccal)=3-hd(\Ccal)\geq 2 .
$$
The sheaf $\Ccal$ is torsion-free and locally free away from the origin, hence
the depth estimate shows that $\Ccal$ satisfies Serre's $S_2$ condition. Therefore $\Ccal$
is reflexive. This implies $m^{\mathrm{alg}}=l(\Ccal^{**}/\Ccal)=0$,
contradicting Proposition~\ref{ExtensionNonzero}.
\end{proof}
 
Recall the second algebraic invariant using only
the central fiber $\Ccal$:
$$
m^{\mathrm{alg}}_0:=l(\Ecal xt^1(\Ccal,\Ocal_B)).
$$

The next elementary property is well known and its proof is included for
completeness.

\begin{lem}\label{Lemma-Defect only depends on reflexive hull}
The algebraic multiplicity $m_0^{\mathrm{alg}}$ depends only on the reflexive hull
$\Ccal^{**}$. More precisely,
$$
m_0^{\mathrm{alg}}=l(\Ecal xt^1(\Ccal^{**},\Ocal_B)).
$$
\end{lem}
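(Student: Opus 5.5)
The plan is to compare the two Ext sheaves through the short exact sequence
$$
0\to \Ccal\to \Ccal^{**}\to \tau\to 0,\qquad \tau:=\Ccal^{**}/\Ccal,
$$
where $\tau$ has finite length and is supported at the origin of $B$. We apply $\Hcal om_{\Ocal_B}(-,\Ocal_B)$ and take the associated long exact sequence of Ext sheaves. The relevant portion is
$$
\Ecal xt^1(\tau,\Ocal_B)\to \Ecal xt^1(\Ccal^{**},\Ocal_B)\to \Ecal xt^1(\Ccal,\Ocal_B)\to \Ecal xt^2(\tau,\Ocal_B)\to \Ecal xt^2(\Ccal^{**},\Ocal_B).
$$
The whole argument then reduces to understanding the Ext sheaves of the finite-length sheaf $\tau$ on the smooth threefold $B$.

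The key input is local duality on the regular local ring $\Ocal_{B,o}$ of dimension three. For a module $\tau$ of finite length, $\Ecal xt^i(\tau,\Ocal_B)=0$ for $i\neq 3$, since $\operatorname{depth}\Ocal_{B,o}=3$ and $\dim\tau=0$. This is the standard grade statement: $\Ecal xt^i(M,R)=0$ for $i<\operatorname{grade}(M)=3$. Moreover, $\Ecal xt^3(\tau,\Ocal_B)$ is the Matlis dual of $\tau$, but we will not need that fact.

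With $\Ecal xt^1(\tau,\Ocal_B)=\Ecal xt^2(\tau,\Ocal_B)=0$, the displayed sequence collapses to an isomorphism
$$
\Ecal xt^1(\Ccal^{**},\Ocal_B)\xrightarrow{\ \sim\ }\Ecal xt^1(\Ccal,\Ocal_B).
$$
Taking lengths gives $m_0^{\mathrm{alg}}=l(\Ecal xt^1(\Ccal^{**},\Ocal_B))$, which depends only on $\Ccal^{**}$.

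There is no real obstacle here. The only point to justify is the vanishing of $\Ecal xt^{1}$ and $\Ecal xt^{2}$ for a zero-dimensional sheaf on a smooth threefold. If one wants to avoid quoting the grade statement, I would argue by induction on the length of $\tau$ using the exact sequences $0\to \tau'\to\tau\to \CBbb_o\to 0$. This reduces everything to the skyscraper sheaf $\CBbb_o$, whose Ext sheaves are computed from the Koszul resolution of $(z_1,z_2,z_3)$. That computation shows $\Ecal xt^i(\CBbb_o,\Ocal_B)=0$ for $i<3$.

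Since the lemma does not use the family $\Ecal$ at all, it holds for any torsion-free $\Ccal$ with an isolated singularity.
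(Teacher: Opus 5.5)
Your proposal is correct and follows essentially the same route as the paper. Both apply $\mathcal{H}om_{\mathcal O_B}(-,\mathcal O_B)$ to $0\to\mathcal C\to\mathcal C^{**}\to Q\to 0$ and use $\mathcal{E}xt^k(Q,\mathcal O_B)=0$ for $k<3$, for a finite-length $Q$ on the smooth threefold $B$, to obtain $\mathcal{E}xt^1(\mathcal C^{**},\mathcal O_B)\cong\mathcal{E}xt^1(\mathcal C,\mathcal O_B)$.
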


\begin{proof}
Set $Q:=\Ccal^{**}/\Ccal$. Since $B$ is smooth of dimension three and $Q$ is
supported at the origin, $Q$ is a finite-length sheaf of codimension three.
Hence
$$
\Ecal xt^k(Q,\Ocal_B)=0
$$
for all $k<3$. Apply $\mathcal{H}om_{\Ocal_B}(-,\Ocal_B)$ to the exact sequence
$$
0\rightarrow \Ccal
\rightarrow \Ccal^{**}
\rightarrow Q
\rightarrow 0.
$$
The vanishing above gives a natural isomorphism
$$
\Ecal xt^1(\Ccal^{**},\Ocal_B)
\cong
\Ecal xt^1(\Ccal,\Ocal_B).
$$
Taking lengths gives the desired identity.
\end{proof}
\subsection{Proof of Theorem \ref{thm:bubbling-rigidity}}

The proof begins with two elementary restriction facts.

\begin{lem}\label{RestrictionOfResolution}
A locally free resolution of $\Ecal$ restricts to a locally free resolution of
the central fiber $\Ccal$.
\end{lem}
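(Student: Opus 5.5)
The plan is to reduce the statement to the vanishing of $\Tcal or_i^{\Ocal}(\Ecal,\Ocal_B)$ for $i\geq 1$, and to get that vanishing from flatness.

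Take a locally free resolution near $(o,0)$,
$$
0\to \Ocal^{a_k}\to\cdots\to \Ocal^{a_1}\to \Ocal^{a_0}\to \Ecal\to 0 .
$$
Tensoring with $\Ocal_B=\Ocal/t\Ocal$ over $\Ocal$ gives the complex
$$
0\to \Ocal_B^{a_k}\to\cdots\to \Ocal_B^{a_0}\to \Ccal\to 0 .
$$
Its homology in position $i\geq 1$ is $\Tcal or_i^{\Ocal}(\Ecal,\Ocal_B)$, and in position $0$ it is $\Ecal\otimes\Ocal_B=\Ccal$.

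To compute these Tor sheaves, use the Koszul resolution
$$
0\to \Ocal\xrightarrow{t}\Ocal\to \Ocal_B\to 0 .
$$
This gives $\Tcal or_i(\Ecal,\Ocal_B)=0$ for $i\geq 2$, and $\Tcal or_1(\Ecal,\Ocal_B)=\Kcal er(t:\Ecal\to\Ecal)$. By Lemma~\ref{ReflexiveProperty}, $\Ecal$ is reflexive, hence torsion-free, so multiplication by $t$ is injective on $\Ecal$ and $\Tcal or_1$ vanishes as well. Equivalently, this is flatness of $\Ecal$ over $\Delta$.

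Therefore the restricted complex is exact, and its terms $\Ocal_B^{a_i}$ are free on $B$, so it is a locally free resolution of $\Ccal$. There is no real obstacle here. The only point needing care is to identify the homology of the restricted complex with Tor correctly, which amounts to computing Tor by balancing: resolving either argument gives the same answer.
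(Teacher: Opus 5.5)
Your proof is correct and is essentially the paper's argument: both rest on the facts that $\Tcal or_1(-,\Ocal_B)$ is the $t$-torsion and that $\Ecal$ is torsion-free. The paper splits a length-two resolution into short exact sequences and also uses that the syzygy $\Gcal\subset\Ocal^c$ is torsion-free. You instead balance Tor against the two-term resolution of $\Ocal_B$, which kills all higher Tor at once and so handles resolutions of any length.
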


\begin{proof}
Take a locally free resolution
$$
0\rightarrow \Ocal^a \xrightarrow{\Phi^1} \Ocal^b
\xrightarrow{\Phi^2} \Ocal^c \rightarrow \Ecal\rightarrow 0.
$$
Set $\Gcal:=\operatorname{Im}(\Phi^2)$. This gives two short exact sequences
$$
0\rightarrow \Ocal^a \rightarrow \Ocal^b \rightarrow \Gcal \rightarrow 0
$$
and
$$
0\rightarrow \Gcal \rightarrow \Ocal^c \rightarrow \Ecal \rightarrow 0.
$$
The sheaf $\Gcal$ is torsion free because it is a subsheaf of $\Ocal^c$.
Since $B$ is cut out by $t=0$, the sheaf
$\Tcal or_1(\iota_*\Ocal_B,\Fcal)$ is the $t$-torsion of $\Fcal$. Hence
$$
\Tcal or_1(\iota_*\Ocal_B,\Gcal)=0,
\qquad
\Tcal or_1(\iota_*\Ocal_B,\Ecal)=0.
$$
Therefore both short exact sequences remain exact after restricting to $B$.
Splicing the restricted sequences gives
$$
0\rightarrow \Ocal_B^a \xrightarrow{\Phi^1_0} \Ocal_B^b
\xrightarrow{\Phi^2_0} \Ocal_B^c \rightarrow \Ccal\rightarrow 0,
$$
which is a locally free resolution of $\Ccal$.
\end{proof}

The second restriction fact identifies the corresponding sheaf Ext groups.

\begin{lem}\label{SameExt}
For every $k$, $\Ecal xt^k(\Ecal,\iota_*\Ocal_B)
\cong
\iota_*\Ecal xt^k(\Ccal,\Ocal_B).$
\end{lem}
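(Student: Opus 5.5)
The plan is to compute both sides from the same locally free resolution, using Lemma~\ref{RestrictionOfResolution}. Near $(o,0)$ we take a locally free resolution
$$
0\rightarrow \Ocal^a \xrightarrow{\Phi^1} \Ocal^b
\xrightarrow{\Phi^2} \Ocal^c \rightarrow \Ecal\rightarrow 0,
$$
which exists because $hd(\Ecal)\leq 2$, as in the proof of Proposition~\ref{ExtensionNonzero}. By Lemma~\ref{RestrictionOfResolution}, its restriction $P^\bullet_0$ to $B$ is a locally free resolution of $\Ccal$. Hence $\Ecal xt^k(\Ccal,\Ocal_B)$ is the $k$-th cohomology of the dual complex $\Hcal om_{\Ocal_B}(P^\bullet_0,\Ocal_B)$.

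On the other side, $\Ecal xt^k(\Ecal,\iota_*\Ocal_B)$ is the $k$-th cohomology of $\Hcal om_{\Ocal}(P^\bullet,\iota_*\Ocal_B)$. For a free module we have
$$
\Hcal om_{\Ocal}(\Ocal^n,\iota_*\Ocal_B)=\iota_*\Ocal_B^n=\iota_*\Hcal om_{\Ocal_B}(\Ocal_B^n,\Ocal_B),
$$
and under these identifications the differentials are the transposes of the restricted maps $\Phi^i_0$. So the two complexes $\Hcal om_{\Ocal}(P^\bullet,\iota_*\Ocal_B)$ and $\iota_*\Hcal om_{\Ocal_B}(P^\bullet_0,\Ocal_B)$ are isomorphic. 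Because $\iota_*$ is exact (the inclusion is a closed immersion), taking cohomology gives the claimed isomorphism for every $k$.

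It remains to make this independent of the chosen resolution and compatible with gluing, so that it is an isomorphism of sheaves. The isomorphism above is natural in the free module $\Ocal^n$. Moreover, any two resolutions of $\Ecal$ are homotopy equivalent, and restriction to $B$ carries such a homotopy equivalence to a homotopy equivalence between the restricted resolutions. Hence the induced isomorphism on cohomology is canonical. An equivalent way to say this is the derived adjunction
$$
R\Hcal om_{\Ocal}(\Ecal,\iota_*\Ocal_B)\cong \iota_*R\Hcal om_{\Ocal_B}(L\iota^*\Ecal,\Ocal_B),
$$
combined with $L\iota^*\Ecal=\Ccal$, which holds because $\Ecal$ has no $t$-torsion (Lemma~\ref{ReflexiveProperty}).

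The step needing the most care is exactly this vanishing of the higher derived restrictions. The identification $L\iota^*\Ecal=\Ccal$ uses $\Tcal or_1(\iota_*\Ocal_B,\Ecal)=0$. For $k\ge 2$ we have $\Tcal or_k(\iota_*\Ocal_B,-)=0$ automatically, since $\iota_*\Ocal_B$ has the two-term free resolution
$$
0\to\Ocal\xrightarrow{t}\Ocal\to\iota_*\Ocal_B\to 0.
$$
Since the $\Tcal or_1$ vanishing was already established in Lemma~\ref{RestrictionOfResolution}, I expect no real obstacle, and the argument is essentially formal.
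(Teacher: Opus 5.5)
Your proposal is correct and follows the paper's own proof: you resolve $\Ecal$ locally, restrict the resolution to $B$ via Lemma~\ref{RestrictionOfResolution}, identify $\Hcal om_{\Ocal}(\Ocal^r,\iota_*\Ocal_B)\cong\iota_*\Hcal om_{\Ocal_B}(\Ocal_B^r,\Ocal_B)$ compatibly with the transposed differentials, and conclude by exactness of $\iota_*$. Your extra remarks on independence of the resolution, and the equivalent derived-adjunction formulation with $L\iota^*\Ecal=\Ccal$ (which holds since $\Ecal$ has no $t$-torsion), are correct; the paper does not spell them out.
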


\begin{proof}
By Lemma \ref{RestrictionOfResolution}, choose a locally free resolution
$$
0\rightarrow \Ocal^a \xrightarrow{\Phi^1} \Ocal^b
\xrightarrow{\Phi^2} \Ocal^c \rightarrow \Ecal\rightarrow 0
$$
whose restriction to $B$ is
$$
0\rightarrow \Ocal_B^a \xrightarrow{\Phi^1_0} \Ocal_B^b
\xrightarrow{\Phi^2_0} \Ocal_B^c \rightarrow \Ccal\rightarrow 0.
$$
The complex computing $\Ecal xt^k(\Ecal,\iota_*\Ocal_B)$ is
$$
\mathcal{H}om_{\Ocal}(\Ocal^c,\iota_*\Ocal_B)
\xrightarrow{(\Phi^{2})^t}
\mathcal{H}om_{\Ocal}(\Ocal^b,\iota_*\Ocal_B)
\xrightarrow{(\Phi^{1})^t}
\mathcal{H}om_{\Ocal}(\Ocal^a,\iota_*\Ocal_B).
$$
For each $r$, there is a natural identification
$$
\mathcal{H}om_{\Ocal}(\Ocal^r,\iota_*\Ocal_B)
\cong
\iota_*\mathcal{H}om_{\Ocal_B}(\Ocal_B^r,\Ocal_B).
$$
Under these identifications, the complex above is the pushforward of
$$
\mathcal{H}om_{\Ocal_B}(\Ocal_B^c,\Ocal_B)
\xrightarrow{(\Phi^{2}_0)^t}
\mathcal{H}om_{\Ocal_B}(\Ocal_B^b,\Ocal_B)
\xrightarrow{(\Phi^{1}_0)^t}
\mathcal{H}om_{\Ocal_B}(\Ocal_B^a,\Ocal_B).
$$
Since $\iota_*$ is exact for a closed immersion, taking cohomology gives
$$
\Ecal xt^k(\Ecal,\iota_*\Ocal_B)
\cong
\iota_*\Ecal xt^k(\Ccal,\Ocal_B).
$$
\end{proof}

The proof also uses the following length computation.

\begin{lem}\label{LengthExtThree}
Let $\tau$ be a finite-length sheaf on $B$ supported at the origin. Then
$$
l(\Ecal xt^3(\tau,\Ocal_B))=l(\tau).
$$
\end{lem}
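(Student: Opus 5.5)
We prove $l(\Ecal xt^3(\tau,\Ocal_B))=l(\tau)$ by induction on $l(\tau)$, using that $\Ecal xt^3(-,\Ocal_B)$ is exact on finite-length sheaves supported at the origin.

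\emph{Exactness.} For a finite-length sheaf $\tau$ on the smooth threefold $B$, the sheaves $\Ecal xt^k(\tau,\Ocal_B)$ vanish for $k<3$, since $\tau$ has codimension three; this is the same vanishing used in Lemma~\ref{Lemma-Defect only depends on reflexive hull}. They also vanish for $k>3$, since $hd(\tau)\leq 3$ on a regular local ring of dimension three. Now apply $\Hcal om_{\Ocal_B}(-,\Ocal_B)$ to a short exact sequence
$$
0\to\tau'\to\tau\to\tau''\to 0
$$
of finite-length sheaves supported at $o$. The long exact Ext sequence collapses to
$$
0\to \Ecal xt^3(\tau'',\Ocal_B)\to\Ecal xt^3(\tau,\Ocal_B)\to\Ecal xt^3(\tau',\Ocal_B)\to 0 .
$$
Hence $\tau\mapsto l(\Ecal xt^3(\tau,\Ocal_B))$ is additive on such sequences.

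\emph{Induction.} If $l(\tau)\geq 1$, the module $\tau_o$ has finite length over the local ring $\Ocal_{B,o}$. It therefore has a nonzero element annihilated by the maximal ideal $\mathfrak m$, giving a short exact sequence
$$
0\to \CBbb_o\to\tau\to\tau''\to 0,\qquad l(\tau'')=l(\tau)-1 .
$$
By additivity and the inductive hypothesis, the lemma reduces to the base case $l(\Ecal xt^3(\CBbb_o,\Ocal_B))=1$.

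\emph{Base case.} Compute $\Ecal xt^3(\CBbb_o,\Ocal_B)$ from the Koszul resolution
$$
0\to\Ocal_B\to\Ocal_B^{3}\to\Ocal_B^{3}\to\Ocal_B\to\CBbb_o\to 0
$$
given by $z_1,z_2,z_3$. The Koszul complex is self-dual, so dualizing yields the Koszul complex again, and its top cohomology is $\Ocal_B/(z_1,z_2,z_3)\cong\CBbb_o$. This has length one.

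Alternatively, one could invoke local duality: $\Ecal xt^3(\tau,\Ocal_B)_o$ is the Matlis dual of $\tau_o$, and Matlis duality preserves length. The induction above avoids quoting this.

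No step is a serious obstacle. The only points to check are the vanishing of $\Ecal xt^k(\tau,\Ocal_B)$ for $k\neq 3$, which follows from the depth and codimension bounds, and the Koszul self-duality computation in the base case. Both are standard.
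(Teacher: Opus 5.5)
Your proposal is correct and follows the paper's proof: induction on $l(\tau)$, a Koszul self-duality base case, and a short exact sequence of $\Ecal xt^3$'s obtained from the vanishing of $\Ecal xt^k(-,\Ocal_B)$ for $k\neq 3$. The only differences are cosmetic: you split off a socle copy of $\CBbb_o$ where the paper takes an arbitrary nonzero proper subsheaf, and you state explicitly the vanishing for $k>3$ that the paper leaves implicit.
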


\begin{proof}
The proof is by induction on $l(\tau)$.

If $l(\tau)=1$, then $\tau\cong \Ocal_B/\mathfrak m$, where
$\mathfrak m$ is the maximal ideal of the origin. The Koszul complex of
$\mathfrak m$ gives a locally free resolution of $\Ocal_B/\mathfrak m$,
and its self-duality gives
$$
\Ecal xt^3(\Ocal_B/\mathfrak m,\Ocal_B)
\cong
\Ocal_B/\mathfrak m.
$$
Thus the claim holds when $l(\tau)=1$.

Assume $l(\tau)>1$. Choose a nonzero proper subsheaf
$\tau'\subset \tau$, and set $\tau'':=\tau/\tau'$. Then
$$
0\rightarrow \tau'\rightarrow \tau\rightarrow \tau''\rightarrow 0
$$
is exact, with $0<l(\tau'),l(\tau'')<l(\tau)$. Since $B$ is smooth of
dimension three and all three sheaves have finite length, the associated
long exact sequence gives
$$
0\rightarrow \Ecal xt^3(\tau'',\Ocal_B)
\rightarrow \Ecal xt^3(\tau,\Ocal_B)
\rightarrow \Ecal xt^3(\tau',\Ocal_B)
\rightarrow 0.
$$
Taking lengths and applying the induction hypothesis gives
$$
l(\Ecal xt^3(\tau,\Ocal_B))
=
l(\Ecal xt^3(\tau',\Ocal_B))
+
l(\Ecal xt^3(\tau'',\Ocal_B))
=
l(\tau')+l(\tau'')
=
l(\tau).
$$
\end{proof}
   
    \begin{proof}[Proof of Theorem \ref{thm:bubbling-rigidity}]
Apply $\mathcal{H}om(\Ecal,\bullet)$ to the short exact sequence
$$
0\rightarrow \Ocal \xrightarrow{t} \Ocal
\rightarrow \iota_*\Ocal_B \rightarrow 0.
$$
Put $\tau_i:=\Ecal xt^i(\Ecal,\Ocal)$ for $i=1,2$. The associated long
exact sequence contains
$$
0\rightarrow \tau_1/t\tau_1
\rightarrow \Ecal xt^1(\Ecal,\iota_*\Ocal_B)
\rightarrow \Kcal er(t:\tau_2\rightarrow \tau_2)
\rightarrow 0.
$$
The length-two locally free resolution of $\Ecal$ gives
$\Ecal xt^3(\Ecal,\Ocal)=0$, so the same long exact sequence gives
$$
\Ecal xt^2(\Ecal,\iota_*\Ocal_B)\cong \tau_2/t\tau_2.
$$

Under the isolated-singularity assumption, $\tau_1$ and $\tau_2$ have
finite length. For an endomorphism of a finite-length sheaf, the kernel and
cokernel have the same length. Using the earlier identity from Lemma \ref{lem:first-multiplicity}
$$
m^{\mathrm{alg}}
=
l\bigl(\Kcal er(t:\tau_1\rightarrow \tau_1)\bigr),
$$
the two displays above give
$$
l(\Ecal xt^1(\Ecal,\iota_*\Ocal_B))
=
m^{\mathrm{alg}}
+
l(\Ecal xt^2(\Ecal,\iota_*\Ocal_B)).
$$

By Lemma \ref{SameExt}, applied with $k=1,2$,
$$
l(\Ecal xt^1(\Ecal,\iota_*\Ocal_B))
=
l(\Ecal xt^1(\Ccal,\Ocal_B))
$$
and
$$
l(\Ecal xt^2(\Ecal,\iota_*\Ocal_B))
=
l(\Ecal xt^2(\Ccal,\Ocal_B)).
$$
Hence
$$
m_0^{\mathrm{alg}}
=
m^{\mathrm{alg}}
+
l(\Ecal xt^2(\Ccal,\Ocal_B)).
$$

It remains to compute the last length. Set $Q:=\Ccal^{**}/\Ccal$. Then
$Q$ is the finite-length defect and $m^{\mathrm{alg}}=l(Q)$. Applying
$\mathcal{H}om_{\Ocal_B}(-,\Ocal_B)$ to
$$
0\rightarrow \Ccal \rightarrow \Ccal^{**}\rightarrow Q\rightarrow 0
$$
gives the relevant part of the long exact sequence. Since $Q$ has finite
length on the smooth threefold $B$,
$\Ecal xt^i(Q,\Ocal_B)=0$ for $i<3$. Since $\Ccal^{**}$ is reflexive on
$B$, it has depth at least two; hence $hd(\Ccal^{**})\leq 1$. Thus
$\Ecal xt^i(\Ccal^{**},\Ocal_B)=0$ for $i\geq 2$. Therefore
$$
\Ecal xt^2(\Ccal,\Ocal_B)
\cong
\Ecal xt^3(Q,\Ocal_B).
$$
By Lemma \ref{LengthExtThree},
$$
l(\Ecal xt^2(\Ccal,\Ocal_B))
=
l(\Ecal xt^3(Q,\Ocal_B))
=
l(Q)
=
m^{\mathrm{alg}}.
$$
Substituting this into the previous identity gives
$$
m_0^{\mathrm{alg}}=2m^{\mathrm{alg}}.
$$
In particular, $l(\Ecal xt^1(\Ccal,\Ocal_B))$ is even.

By Proposition \ref{ExtensionNonzero}, $m^{\mathrm{alg}}>0$, hence $m_0^{\mathrm{alg}}>0$. By Lemma
\ref{Lemma-Defect only depends on reflexive hull},
$$
m_0^{\mathrm{alg}}
=
l(\Ecal xt^1(\Ccal^{**},\Ocal_B)).
$$
Thus $\Ecal xt^1(\Ccal^{**},\Ocal_B)\neq 0$, so $\Ccal^{**}$ is not locally
free at the origin.
\end{proof}
   
We record the following immediate consequence.

\begin{cor}\label{IdealNotSmoothable}
Let $\Ical\subset \Ocal_B$ be a proper finite-colength ideal sheaf
supported at the origin. Then the rank-two torsion-free sheaf
$\Ocal_B\oplus \Ical$ is not smoothable.
\end{cor}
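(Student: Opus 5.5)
We argue by contradiction using Theorem~\ref{thm:bubbling-rigidity}. Suppose $\Fcal:=\Ocal_B\oplus \Ical$ were smoothable, i.e.\ $\Fcal\cong \Ccal$ for the central fiber $\Ccal$ of some family $\Ecal$ degenerating to a rank-two torsion-free sheaf with an isolated point singularity at the origin. We first check that $\Fcal$ fits the hypotheses of the definition. It is torsion-free of rank two. Since $\Ical$ has finite colength supported at $o$, the sheaf $\Fcal$ is locally free (indeed $\cong\Ocal_B^{\oplus 2}$) on $B\setminus o$. Because $\Ical$ is proper, $\Fcal$ is not locally free at $o$: a rank-one torsion-free sheaf that is locally free at $o$ would be reflexive, but $\Ical^{**}=\Ocal_B\neq\Ical$.

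Next we compute the reflexive hull. Taking duals summand by summand, and using that $\Ocal_B/\Ical$ is supported in codimension three so that $\Ical^{*}=\Ocal_B$ (Hartogs), we get
$$
\Fcal^{**}\cong \Ocal_B\oplus\Ocal_B .
$$
This is locally free at the origin.

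Theorem~\ref{thm:bubbling-rigidity} asserts that the reflexive hull $\Ccal^{**}$ of the central fiber of any such family is not locally free at the origin. Equivalently, via Lemma~\ref{Lemma-Defect only depends on reflexive hull}, we have $m_0^{\mathrm{alg}}=l(\Ecal xt^1(\Ccal^{**},\Ocal_B))=2m^{\mathrm{alg}}>0$. But here $\Ecal xt^1(\Ocal_B^{\oplus 2},\Ocal_B)=0$, so $m_0^{\mathrm{alg}}=0$. Combined with $m^{\mathrm{alg}}=l(\Ocal_B/\Ical)>0$, this contradicts the identity $m_0^{\mathrm{alg}}=2m^{\mathrm{alg}}$, and it also contradicts the non-local-freeness of $\Ccal^{**}$ directly. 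Hence $\Fcal$ is not smoothable.

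There is no real obstacle here. The only points needing care are the two elementary facts above: $\Ical^{**}=\Ocal_B$, and $\Fcal$ is genuinely singular at $o$, which uses that $\Ical$ is proper. Both follow from the codimension-three support of $\Ocal_B/\Ical$.
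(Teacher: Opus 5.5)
Your proposal is correct and follows the paper's own one-line argument: the reflexive hull $(\Ocal_B\oplus\Ical)^{**}\cong\Ocal_B^{\oplus 2}$ is locally free, which contradicts Theorem~\ref{thm:bubbling-rigidity}. Your extra checks (that $\Ocal_B\oplus\Ical$ is genuinely singular at the origin, and the alternative contradiction $0=m_0^{\mathrm{alg}}\neq 2\,l(\Ocal_B/\Ical)$) are valid but not needed for the argument.
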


\begin{proof}
Indeed, $(\Ocal_B\oplus \Ical)^{**}\cong \Ocal_B^{\oplus 2}$ is locally
free, contradicting Theorem~\ref{thm:bubbling-rigidity}.
\end{proof}

\begin{rmk}\label{rmk-2.14}
It is well known that the corresponding statement is false in complex dimension two. On a
neighbourhood of the origin in $\CBbb^2\times \Delta$, with coordinates
$x,y,z$ and parameter $z$, consider
$$
0\rightarrow \Ocal
\xrightarrow{
\begin{pmatrix}
x\\
y\\
z
\end{pmatrix}}
\Ocal^{\oplus 3}
\rightarrow \Ecal
\rightarrow 0 .
$$
For $z\neq 0$, the restriction of $\Ecal$ to the fiber $\{z=\mathrm{const}\}$
is locally free. On the central surface $S=\{z=0\}$, writing
$\Ccal=\left.\Ecal\right|_S$, one has
$$
        \Ccal\cong (x,y)\oplus \Ocal_S,
        \qquad
        \Ccal^{**}\cong \Ocal_S^{\oplus 2}.
$$
Thus $\Ocal_S\oplus (x,y)$ is smoothable on a surface, and taking the
double dual removes the isolated singularity.
\end{rmk}
We also record the following parity obstruction.

\begin{cor}\label{OddParityObstruction}
Let $\Fcal$ be a rank-two torsion-free sheaf on $B$ with an isolated
point singularity at the origin. If
$l(\Ecal xt^1(\Fcal,\Ocal_B))\equiv 1 \pmod{2}$, then $\Fcal$ is not
smoothable. In particular, if moreover $\Fcal$ is reflexive, then there is no family
$\Ecal$ degenerating as above such that $\Ccal^{**}\cong \Fcal$.
\end{cor}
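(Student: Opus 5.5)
This is a contrapositive of Theorem~\ref{thm:bubbling-rigidity}, combined with Lemma~\ref{Lemma-Defect only depends on reflexive hull} for the second statement. I would argue by contradiction in both parts.

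For the first statement, suppose $\Fcal$ is smoothable. By definition there is a family $\Ecal$ degenerating to a rank-two torsion-free sheaf with an isolated point singularity at the origin, with central fiber $\Ccal\cong\Fcal$. An isomorphism of coherent sheaves induces an isomorphism of their $\Ecal xt$ sheaves, so
$$
m_0^{\mathrm{alg}}=l(\Ecal xt^1(\Ccal,\Ocal_B))=l(\Ecal xt^1(\Fcal,\Ocal_B)).
$$
Theorem~\ref{thm:bubbling-rigidity} gives $m_0^{\mathrm{alg}}=2m^{\mathrm{alg}}$, which is even. This contradicts the parity hypothesis, so $\Fcal$ is not smoothable.

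For the second statement, let $\Fcal$ be reflexive with odd $l(\Ecal xt^1(\Fcal,\Ocal_B))$. Suppose some family $\Ecal$ as above has $\Ccal^{**}\cong\Fcal$. Lemma~\ref{Lemma-Defect only depends on reflexive hull} gives
$$
m_0^{\mathrm{alg}}=l(\Ecal xt^1(\Ccal^{**},\Ocal_B))=l(\Ecal xt^1(\Fcal,\Ocal_B)),
$$
which is odd. This again contradicts the evenness in Theorem~\ref{thm:bubbling-rigidity}.

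There is no real obstacle here; the only care needed is in applying the lemma. Lemma~\ref{Lemma-Defect only depends on reflexive hull} is applied to the central fiber $\Ccal$, which is automatically torsion-free with $\Ccal^{**}/\Ccal$ supported at the origin. So no hypotheses on $\Fcal$ are needed beyond reflexivity, and $\Fcal$ being locally free away from the origin is automatic from $\Ccal^{**}\cong\Fcal$.
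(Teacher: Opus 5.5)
Your proposal is correct and follows the paper's own argument: the first assertion is the evenness of $m_0^{\mathrm{alg}}=2m^{\mathrm{alg}}$ from Theorem~\ref{thm:bubbling-rigidity} applied to $\Ccal\cong\Fcal$, and the second combines that evenness with Lemma~\ref{Lemma-Defect only depends on reflexive hull}, which identifies $m_0^{\mathrm{alg}}$ with $l(\Ecal xt^1(\Ccal^{**},\Ocal_B))=l(\Ecal xt^1(\Fcal,\Ocal_B))$. Your closing remark is also right; in fact reflexivity of $\Fcal$ is automatic once $\Ccal^{**}\cong\Fcal$, so that hypothesis only serves to make the second statement non-vacuous.
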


\begin{proof}
If $\Fcal\cong \Ccal$ for such a family, then
$l(\Ecal xt^1(\Fcal,\Ocal_B))=m^{\mathrm{alg}}_0$, which is even by
Theorem~\ref{thm:bubbling-rigidity}. This proves the first assertion.

If $\Fcal$ is moreover reflexive with an essential isolated singularity and $\Ccal^{**}\cong \Fcal$ for some degenerating family $\Ecal$ as above, then
Lemma~\ref{Lemma-Defect only depends on reflexive hull} gives
$$
        l(\Ecal xt^1(\Fcal,\Ocal_B))
        =
        l(\Ecal xt^1(\Ccal,\Ocal_B))
        =
        m^{\mathrm{alg}}_0=2m^{\mathrm{alg}},
$$
which is a contradiction.
\end{proof}
   
The parity obstruction gives the following explicit class of reflexive
singularities which cannot occur as reflexive hulls of isolated point
degenerations.

\begin{cor}\label{SingularityThatCannotBeDeformed}
Let $a,b,c\geq 1$ with $abc\equiv 1 \pmod{2}$, and let $\Fcal$ be defined by
$$
0\rightarrow \Ocal_B
\xrightarrow{
\begin{pmatrix}
z_1^a\\
z_2^b\\
z_3^c
\end{pmatrix}}
\Ocal_B^{\oplus 3}
\rightarrow \Fcal
\rightarrow 0 .
$$
Then there is no family $\Ecal$ degenerating as above such that
$\Ccal^{**}\cong \Fcal$.
\end{cor}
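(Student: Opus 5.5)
The plan is to reduce the statement to Corollary~\ref{OddParityObstruction}. For that, two facts about $\Fcal$ are needed: that it is a rank-two reflexive sheaf on $B$ with an isolated singularity at the origin, and that $l(\Ecal xt^1(\Fcal,\Ocal_B))=abc$. Since $abc$ is odd, the corollary then says that no degenerating family can have $\Ccal^{**}\cong\Fcal$.

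For the first fact I will use the defining sequence. The section $s=(z_1^a,z_2^b,z_3^c)$ vanishes only at the origin, so away from $o$ the map $\Ocal_B\to\Ocal_B^{\oplus3}$ is a split injection. Hence $\Fcal$ is locally free of rank two on $B\setminus o$. The sequence itself shows $hd(\Fcal)\le 1$, so $depth(\Fcal)\geq 2$. A torsion-free sheaf that is locally free off a point and has depth $\ge2$ there satisfies $S_2$, and is therefore reflexive. This is the same argument as in Corollary~\ref{NotFreeQuotient}. Torsion-freeness also follows, because $\Fcal$ is the cokernel of a map that is injective with torsion-free image.

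For the second fact I dualize the sequence. This gives
$$
\Ocal_B^{\oplus3}\xrightarrow{(z_1^a,\,z_2^b,\,z_3^c)}\Ocal_B\to \Ecal xt^1(\Fcal,\Ocal_B)\to 0,
$$
so $\Ecal xt^1(\Fcal,\Ocal_B)\cong \Ocal_B/(z_1^a,z_2^b,z_3^c)$. This quotient has basis the monomials $z_1^iz_2^jz_3^k$ with $i<a$, $j<b$, $k<c$, so its length is $abc$. In particular it is nonzero, so the singularity at $o$ is genuine: $\Fcal$ is not locally free there.

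With both facts in hand, Corollary~\ref{OddParityObstruction} applies directly. Equivalently, Lemma~\ref{Lemma-Defect only depends on reflexive hull} would give $m_0^{\mathrm{alg}}=abc$, which is odd and contradicts Theorem~\ref{thm:bubbling-rigidity}. I expect no serious obstacle. The only step needing care is the reflexivity check, that is, the depth and $S_2$ argument.
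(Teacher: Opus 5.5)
Your proposal is correct and follows the paper's proof: reflexivity from $hd(\Fcal)\le 1$ (hence depth two) together with local freeness off the origin, then $\Ecal xt^1(\Fcal,\Ocal_B)\cong\Ocal_B/(z_1^a,z_2^b,z_3^c)$ of length $abc$, then Corollary~\ref{OddParityObstruction}. One small slip: the cokernel of an injective map with torsion-free image need not be torsion-free (take $\Ocal_B\xrightarrow{z_1}\Ocal_B$), but torsion-freeness of $\Fcal$ still holds, since $depth(\Fcal)\geq 2$ at the origin and $\Fcal$ is locally free elsewhere.
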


\begin{proof}
The sheaf $\Fcal$ is reflexive: it is locally free away from the origin,
and at the origin it has homological dimension one, hence depth two. Applying $\Hcal om_{\Ocal_B}(-,\Ocal_B)$ to the defining sequence gives
$$
        \Ecal xt^1(\Fcal,\Ocal_B)
        \cong
        \Ocal_B/(z_1^a,z_2^b,z_3^c).
$$
Thus $l(\Ecal xt^1(\Fcal,\Ocal_B))=abc\equiv 1 \pmod{2}$. The claim
follows from Corollary~\ref{OddParityObstruction}.
\end{proof}

Although the reflexive singularities above cannot occur as reflexive
hulls of isolated point degenerations, they can occur once a
codimension-two defect is allowed.  The following example makes this
explicit.

\begin{exam}\label{HiddenSingularities}
Let $\Ecal$ be defined by the short exact sequence
$$
0\rightarrow \Ocal^{\oplus 2}
\xrightarrow{
\begin{pmatrix}
z_1&t\\
z_2&0\\
-1&z_2\\
0&z_3
\end{pmatrix}}
\Ocal^{\oplus 4}
\rightarrow \Ecal
\rightarrow 0 .
$$
For $t\neq 0$, the defining matrix has rank two everywhere on $B_t$.
On $X$, its rank drops precisely along
$$
        z_2=z_3=t=0 .
$$
Thus this is not an isolated point degeneration; the non-locally-free
locus is a curve contained in the central fiber.

Since $\Ecal$ has a length-one locally free resolution and is locally
free in codimension one, Serre's $S_2$ criterion shows that $\Ecal$ is
reflexive.  Restricting to the central fiber gives
$$
0\rightarrow \Ocal_B^{\oplus 2}
\xrightarrow{
\begin{pmatrix}
z_1&0\\
z_2&0\\
-1&z_2\\
0&z_3
\end{pmatrix}}
\Ocal_B^{\oplus 4}
\rightarrow \Ccal
\rightarrow 0 .
$$
Equivalently, $\Ccal$ admits the presentation
$$
0\rightarrow \Ocal_B
\xrightarrow{
\begin{pmatrix}
z_1\\
z_2\\
z_3
\end{pmatrix}}
\Ocal_B^{\oplus 2}\oplus (z_2,z_3)
\rightarrow \Ccal
\rightarrow 0 .
$$
In particular, $\Ccal$ is torsion-free, and its reflexive hull is given by
$$
0\rightarrow \Ocal_B
\xrightarrow{
\begin{pmatrix}
z_1\\
z_2\\
z_3
\end{pmatrix}}
\Ocal_B^{\oplus 3}
\rightarrow \Ccal^{**}
\rightarrow 0 .
$$
Moreover,
$$
        \Ccal^{**}/\Ccal\cong \Ocal_B/(z_2,z_3),
$$
so the quotient is supported on the curve $\{z_2=z_3=0\}$.

Thus $\Ccal^{**}$ is the reflexive singularity from
Corollary~\ref{SingularityThatCannotBeDeformed} with $a=b=c=1$.  It
cannot occur as the reflexive hull of an isolated point degeneration, but
it appears here because the degeneration has a codimension-two
non-locally-free locus.
\end{exam}
       In higher rank, the rank-two rigidity fails.  An isolated point
singularity may form without algebraic bubbling, and algebraic bubbling
need not be symmetric under dualization: one of $\Ecal$ and $\Ecal^*$
may bubble while the other does not.  This contrasts with
codimension-two HYM bubbling, which is unchanged by passing to the dual
connection. The following example illustrates this.

   \begin{exam}\label{BubbleNotSymmetric}
Let
$$
        \mathfrak m_X=(z_1,z_2,z_3,t)\subset \Ocal,
        \qquad
        \mathfrak m_B=(z_1,z_2,z_3)\subset \Ocal_B .
$$
Consider the rank-three reflexive sheaf $\Ecal$ on $X$ defined by
$$
0\to \Ocal
\xrightarrow{(z_1,z_2,z_3,t)^T}
\Ocal^{\oplus 4}
\to \Ecal
\to 0 .
$$
Dualizing gives
$$
0\to \Ecal^*
\to \Ocal^{\oplus 4}
\xrightarrow{(z_1,z_2,z_3,t)}
\mathfrak m_X
\to 0 .
$$

The central fiber $\Ccal$ is reflexive.  Indeed,
$$
        \Ccal
        \cong
       \Ccal oker
        \bigl(\Ocal_B\xrightarrow{(z_1,z_2,z_3)^T}
        \Ocal_B^{\oplus 3}\bigr)
        \oplus \Ocal_B ,
$$
and the first summand is reflexive on the smooth threefold $B$.  Hence
$$
        l(\Ccal^{**}/\Ccal)=0,
$$
so there is no algebraic bubbling for $\Ecal$.

On the other hand, $\left.\Ecal^*\right|_B$ is not reflexive.  Since
$t$ is a nonzerodivisor on $\mathfrak m_X$, restricting the dual sequence
to $B$ gives
$$
0\to \left.\Ecal^*\right|_B
\to \Ocal_B^{\oplus 4}
\to
\mathfrak m_X/t\mathfrak m_X
\to 0 .
$$
As an $\Ocal_B$-module,
$$
        \mathfrak m_X/t\mathfrak m_X
        \cong
        \mathfrak m_B\oplus \CBbb\bar t,
$$
where $\bar t$ is the quotient class of $t$ and
$\CBbb\bar t$ is a copy of the skyscraper sheaf at the origin.  Thus
$$
        \left.\Ecal^*\right|_B
        \cong
        \Kcal er\bigl(\Ocal_B^{\oplus 3}\to \mathfrak m_B\bigr)
        \oplus \mathfrak m_B .
$$
The first summand is reflexive, while $\mathfrak m_B^{**}\cong \Ocal_B$.
Therefore
$$
        \bigl(\left.\Ecal^*\right|_B\bigr)^{**}
/       \left.\Ecal^*\right|_B
        \cong
        \Ocal_B/\mathfrak m_B,
$$
which has length one.  Hence algebraic bubbling occurs for the dual
family $\Ecal^*$.
\end{exam}

\section{Smoothable elementary modifications}\label{DeformingSingularities}
In this section, we construct smoothable central fibers by modifying
prescribed reflexive singularities at the origin.  The rigidity identity fixes the length of the quotient in any smoothable
elementary modification,
but not the modification itself.  We show that, for an explicit class of
reflexive singularities, this numerical prediction is sharp.  We then
examine the choice of modification: it is not unique in general, while in
the minimal case $m^{\mathrm{alg}}=1$ the smoothable central-fiber model
is unique up to local analytic coordinate change.

\subsection{Main results}
We first construct an explicit class of
elementary modifications which attain the bubbling multiplicity required by
Theorem~\ref{thm:bubbling-rigidity}.  Consider a sheaf $\Fcal$ defined by
$$
0\rightarrow \Ocal_B
\xrightarrow{
\begin{pmatrix}
f_1\\
f_2\\
f_3^2
\end{pmatrix}}
\Ocal_B^{\oplus 3}
\rightarrow \Fcal
\rightarrow 0,
$$
where
$$
        \sqrt{(f_1,f_2,f_3^2)}=(z_1,z_2,z_3).
$$
Let
$
        \tau=\Ocal_B/(f_1,f_2,f_3),
$
and consider the quotient map $q: \Fcal\to\tau$ induced by projection of
$\Ocal_B^{\oplus 3}$ to its third factor. 
\begin{prop}\label{thm-explicit-smoothable}
With the notation above, there exists a family $\Ecal$ degenerating to a central fiber $\Ccal$ with an
isolated point singularity at the origin such that
$$
\Ccal\cong\Kcal er(q).
$$
In particular,
$$
        \Ccal^{**}\cong \Fcal,
$$
where $\Fcal$ is the reflexive sheaf defined above.
\end{prop}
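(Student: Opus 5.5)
The plan is to write down an explicit two-term free presentation of a family $\Ecal$ on $X=B\times\Delta$ and then verify the required properties one at a time. First a consistency check. From the defining sequence, $\Ecal xt^1(\Fcal,\Ocal_B)\cong \Ocal_B/(f_1,f_2,f_3^2)$. Filtering by $(f_1,f_2,f_3)/(f_1,f_2,f_3^2)\cong \Ocal_B/(f_1,f_2,f_3)$ (multiplication by $f_3$, injective since $(f_1,f_2,f_3)$ is a regular sequence) shows that its length is $2\,l(\tau)$. So Theorem~\ref{thm:bubbling-rigidity} predicts $m^{\mathrm{alg}}=l(\tau)$, and this is exactly the length of $\Fcal/\Kcal er(q)$. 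The proposition is therefore the statement that this prediction is realized.

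\textbf{Step 1: a presentation of $\Kcal er(q)$.} I will describe $\Kcal er(q)$ by generators. It contains $e_1,e_2$ (the images of the first two basis vectors) and the elements $g_i=f_ie_3$ for $i=1,2,3$. Among these, $g_1,g_2$ are redundant modulo the relation $f_1e_1+f_2e_2+f_3g_3=0$ only up to Koszul-type syzygies. I expect that, after eliminating redundancies, $\Kcal er(q)$ is the cokernel of a map $\Ocal_B^{\oplus 2}\to\Ocal_B^{\oplus 4}$ with generators $e_1,e_2,g_3$ and one further generator. If it needs more generators, I will allow an $\Ocal_B^{\oplus r}\to\Ocal_B^{\oplus r+2}$ presentation of the central fiber, which is fine because Corollary~\ref{NotFreeQuotient} only forbids such a presentation for $\Ecal$ itself. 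The real presentation of $\Ecal$ will then have length two, as Lemma~\ref{RestrictionOfResolution} allows.

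\textbf{Step 2: deform the presentation in $t$.} I will add $t$-dependent entries to the matrix, in the spirit of Remark~\ref{rmk-2.14} and Example~\ref{HiddenSingularities}, in such a way that for $t\neq0$ the degeneracy locus is empty. This gives $\Ecal$ with $\Ecal_t$ locally free of rank two. For $t=0$, the restricted matrix must have cokernel $\Kcal er(q)$.

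\textbf{Step 3: verification.} The central fiber is computed by restricting the resolution; if $\Ecal$ is $t$-torsion-free, Lemma~\ref{RestrictionOfResolution} applies. Torsion-freeness of $\Ccal=\Kcal er(q)\subset\Fcal$ is automatic, since $\Fcal$ is reflexive: it has homological dimension one, hence depth two, and is locally free off $o$. Local freeness of $\Ccal$ away from $o$ holds because $\tau$ is supported at $o$. $\Ccal$ is not locally free at $o$ because $\Ccal^{**}=\Fcal$ is not. Finally $\Ccal^{**}\cong\Fcal$, since $\Ccal\subset\Fcal$ has finite colength and $\Fcal$ is reflexive.

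\textbf{Main obstacle.} The main obstacle is Step 2: choosing the $t$-deformation so that the rank drop is confined exactly to $(o,0)$. For a $4\times 2$ matrix on a fourfold, the expected codimension of the rank-$\le1$ locus is $3$, so a generic deformation leaves a curve. Naive choices, such as columns $(f_1,f_2,f_3,-t)^T$ and $(0,0,t,f_3)^T$, fail because the minor $f_3^2+t^2$ vanishes along $f_1=f_2=0,\ f_3=\pm it$. My first attempt is to exploit the square $f_3^2$, pairing $f_3$ with $t$ so that the relevant minors become $f_1,f_2$ together with a unit multiple of $t$ (e.g.\ an entry $1$ or $t$ placed so that elimination returns $f_3^2$ at $t=0$). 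Concretely, I will search among matrices whose $2\times2$ minors generate an ideal containing $(f_1,f_2,f_3^2,t)$ up to radical. Then I will check directly that the restriction at $t=0$ reproduces the Step 1 presentation.
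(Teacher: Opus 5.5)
There is a genuine gap. Your length count and your Step~3 verification are fine, but Steps~1--2 aim at a presentation shape that cannot exist.

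\textbf{Why the plan fails.} Any map $\Ocal_B^{\oplus r}\to\Ocal_B^{\oplus r+2}$ with rank-two cokernel is injective, since its kernel is a rank-zero subsheaf of a free sheaf. So if $\Kcal er(q)$ were such a cokernel, it would have $hd\leq 1$, hence depth $\geq 2$, hence be reflexive, i.e.\ equal to $\Fcal$. This contradicts $\Fcal/\Kcal er(q)\cong\tau\neq 0$. Your claim that Corollary~\ref{NotFreeQuotient} only constrains $\Ecal$ is therefore mistaken: its proof runs precisely through the central fiber. Also, $\Kcal er(q)$ needs five generators $e_1,e_2,f_1e_3,f_2e_3,f_3e_3$, since $f_1e_3$ is in general not in the submodule generated by $e_1,e_2,f_3e_3$.

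On the family side, the failure is not a matter of genericity. For a $4\times 2$ matrix over the four-dimensional local ring at $(o,0)$, the ideal of $2\times 2$ minors is proper there. By Eagon--Northcott it then has height at most $3$, so the non-locally-free locus of the cokernel always contains a curve through $(o,0)$. No choice of $t$-entries can make the minors cut out $(o,0)$ set-theoretically, so the search in your ``main obstacle'' paragraph is bound to fail. Your remark that the presentation of $\Ecal$ will have length two points the right way, but nothing in the plan produces such a resolution or says what to deform.

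\textbf{The missing idea.} Take a genuine length-two resolution of $\Kcal er(q)$ and deform its second syzygy. Write $\Kcal er(q)$ as the cokernel of $\Ocal_B\xrightarrow{(f_1,f_2,f_3^2)^T}\Ocal_B^{\oplus 2}\oplus(f_1,f_2,f_3)$, and splice in the Koszul resolution of the ideal $(f_1,f_2,f_3)$. This gives
\[
0\to\Ocal_B\xrightarrow{(f_1,f_2,f_3,0)^T}\Ocal_B^{\oplus 4}\xrightarrow{\Psi}\Ocal_B^{\oplus 5}\to\Kcal er(q)\to 0 .
\]
Now replace the leftmost map by $(f_1,f_2,f_3,t)^T$, whose entries vanish simultaneously only at $(o,0)$. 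Then add three entries $-t$ to $\Psi$ so that the composite is still zero:
\[
M=\begin{pmatrix} f_2&-f_1&-t&f_3\\ -f_3&0&f_1&0\\ 0&f_3&-f_2&0\\ 0&-t&0&f_2\\ -t&0&0&f_1 \end{pmatrix},\qquad M|_{t=0}=\Psi .
\]
The deformed complex is exact because its reduction mod $t$ is (apply Nakayama to the homology). Hence its cokernel $\Ecal$ has no $t$-torsion, and its central fiber is $\Kcal er(q)$.

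Finally, $M$ has rank exactly $3$ off $(o,0)$. The bound $\operatorname{rank}M\leq 3$ comes from $M\cdot(f_1,f_2,f_3,t)^T=0$. On $t=0$ away from $o$, equality comes from the central resolution. For $t\neq 0$, the minor in rows $1,4,5$ and columns $1,2,3$ equals $t^3$. Thus $\Ecal_t$ is locally free of rank two for $t\neq 0$, and your Step~3 completes the argument.
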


\begin{rmk}
   A fixed reflexive sheaf can
admit more than one smoothable elementary modification; see
Corollary~\ref{Cor-nonuniqueness}.
\end{rmk}

In the minimal case where $m^{\mathrm{alg}}=1$, the smoothable central-fiber
model is unique up to local analytic coordinate change. To state the model, let $\Fcal_{\min}$ be the rank-two reflexive sheaf defined by
$$
0\rightarrow \Ocal_B
\xrightarrow{
\begin{pmatrix}
z_1\\
z_2\\
z_3^2
\end{pmatrix}}
\Ocal_B^{\oplus 3}
\rightarrow \Fcal_{\min}
\rightarrow 0 .
$$
Let
$$
        q_{\min}:\Fcal_{\min}\rightarrow \Ocal_B/(z_1,z_2,z_3)
$$
be the quotient induced by projection of $\Ocal_B^{\oplus 3}$ onto its
third factor, and set
$$
        \Ccal_{\min}:=\Kcal er(q_{\min}).
$$

\begin{thm}[The minimal bubbling model]\label{thm:minimal-bubbling-model}
The elementary modification
$\Ccal_{\min}\subset \Fcal_{\min}$ is smoothable. Conversely, if $\Ecal$
is a family degenerating to an isolated point singularity at the origin
with $m^{\mathrm{alg}}=1$, then, after a local analytic change of
coordinates at the origin, the pair
$\Ccal\subset \Ccal^{**}$
is isomorphic to
$\Ccal_{\min}\subset \Fcal_{\min}.$
\end{thm}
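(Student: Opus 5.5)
The plan has two parts. For the existence part we specialize Proposition~\ref{thm-explicit-smoothable} to $f_1=z_1$, $f_2=z_2$, $f_3=z_3$. Then $\sqrt{(z_1,z_2,z_3^2)}=(z_1,z_2,z_3)$, $\tau=\Ocal_B/(z_1,z_2,z_3)$ and $q=q_{\min}$, so $\Ccal_{\min}=\Kcal er(q_{\min})$ is the central fiber of a degenerating family. Hence $\Ccal_{\min}$ is smoothable. All the work is in the converse.

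For the converse, assume $m^{\mathrm{alg}}=1$. By Theorem~\ref{thm:bubbling-rigidity} and Lemma~\ref{Lemma-Defect only depends on reflexive hull}, $\Fcal:=\Ccal^{**}$ is a rank-two reflexive sheaf with $l(\Ecal xt^1(\Fcal,\Ocal_B))=2$.

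Since $\Fcal$ is reflexive on the smooth threefold $B$, we have $hd(\Fcal)\le 1$. So there is a minimal presentation $0\to\Ocal_B^r\xrightarrow{\phi}\Ocal_B^{r+2}\to\Fcal\to 0$ with all entries of $\phi$ in $\mathfrak m$. Then $\Ecal xt^1(\Fcal,\Ocal_B)\cong \Ocal_B^r/\Img(\phi^t)$ is minimally generated by $r$ elements. A module of length two needs at most two generators, so $r\le 2$.

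If $r=2$, the module is $\Ocal_B/\mathfrak m$ to the power two, so $\Img(\phi^t)=\mathfrak m\,\Ocal_B^{2}$. But $\mathfrak m\,\Ocal_B^2$ needs $6$ generators while $\phi^t$ has only $4$ columns, a contradiction. Hence $r=1$, and $\phi=(g_1,g_2,g_3)^T$ with $\Ocal_B/(g_1,g_2,g_3)$ of length two.

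A colength-two ideal at the origin has the form $\mathfrak m^2+(\ell_1,\ell_2)$ with $\ell_1,\ell_2$ independent linear parts. After an analytic coordinate change it equals $(z_1,z_2,z_3^2)$. Since $g_1,g_2,g_3$ are minimal generators of this ideal, a change of basis in $GL_3(\Ocal_B)$ brings $\phi$ to $(z_1,z_2,z_3^2)^T$. Thus $\Fcal\cong\Fcal_{\min}$.

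Next, $l(\Fcal/\Ccal)=1$, so $\Ccal=\Kcal er(\lambda)$ for a nonzero functional $\lambda:\Fcal\otimes\CBbb\cong\CBbb^3\to\CBbb$, say $\lambda=(a,b,c)$. I will use automorphisms of $\Fcal_{\min}$, namely matrices $A$ with $A\phi=u\phi$ for a unit $u$, together with coordinate changes fixing the ideal such as $z_1\mapsto z_1+\alpha z_3^2$. With these I expect to reduce to two orbits: $\lambda=e_3^*$, which gives $\Ccal_{\min}$, and $\lambda=e_1^*$, which covers the case $(a,b)\neq 0$. The third component $a_{31}z_1+a_{32}z_2+a_{33}z_3^2$ of $A\phi$ must be a multiple of $z_3^2$, which forces $a_{31},a_{32}\in\mathfrak m$. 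So the span of $e_1,e_2$ modulo $\mathfrak m$ is preserved, and these two orbits are genuinely distinct.

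The main obstacle is excluding $\lambda=e_1^*$. Every numerical invariant used so far agrees on the two candidates: $\Ecal xt^1(\Ccal,\Ocal_B)\cong\Ecal xt^1(\Fcal,\Ocal_B)$ by Lemma~\ref{Lemma-Defect only depends on reflexive hull}, and $\Ecal xt^2(\Ccal,\Ocal_B)\cong\CBbb$. So I would use the finer structure from the proof of Theorem~\ref{thm:bubbling-rigidity}. Here $\tau_1=\Ecal xt^1(\Ecal,\Ocal)$ has $l(\Kcal er\,t)=1$, hence $\tau_1/t\tau_1$ has length one, and $\tau_1$ is cyclic. The sequence
$$0\to\tau_1/t\tau_1\to\Ecal xt^1(\Ccal,\Ocal_B)\to\Kcal er(t:\tau_2\to\tau_2)\to 0$$
shows that the image of $\tau_1/t\tau_1$ is the socle line of $\Ocal_B/(z_1,z_2,z_3^2)$. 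This forces the connecting map $\Ecal xt^1(\Ccal,\Ocal_B)\to \Ecal xt^2(\Ccal,\Ocal_B)\cong\Ecal xt^3(\Fcal/\Ccal,\Ocal_B)$, which is induced by the deformation class of $t$, to be surjective with kernel the socle.

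I would then compute, for $\lambda=e_1^*$, the Yoneda pairing between the extension $0\to\Ccal\to\Fcal\to\CBbb\to0$ and the deformation class in $\Ecal xt^1(\Ccal,\Ccal)$. The aim is to show that it cannot induce such a surjection compatibly with the fixed $\Ocal_B$-module structure. The underlying asymmetry is that $e_1$ pairs with the linear generator $z_1$ rather than with $z_3^2$.

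If this direct Ext computation becomes unwieldy, the fallback is to write $\Ccal_{e_1^*}$ explicitly and compare it with $\Ccal_{\min}$. The sheaf $\Ccal_{e_1^*}$ is generated by $e_2,e_3,z_2e_1,z_3e_1$. The goal would be to show that any flat lift of its presentation over $\Delta$ to a sheaf locally free off $(o,0)$ forces the rank-drop locus to contain a curve, contradicting the isolated-point hypothesis.
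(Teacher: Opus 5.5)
Your existence argument, your identification $\Ccal^{**}\cong\Fcal_{\min}$, and your reduction of $\lambda=(a,b,c)$ to the two cases $e_3^*$ and $e_1^*$ all match the paper. The gap is the step you yourself call the main obstacle. Excluding $\lambda=e_1^*$ is only announced, never carried out, and the claim your primary route rests on is false.

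The sequence you quote maps $\Ecal xt^1(\Ccal,\Ocal_B)$ onto $\Kcal er(t:\tau_2\to\tau_2)\subset\tau_2$, whereas $\Ecal xt^2(\Ccal,\Ocal_B)\cong\tau_2/t\tau_2$. So the first-order (Bockstein) map you want is the composite through $\Kcal er(t)\to\tau_2/t\tau_2$, and this need not be an isomorphism. Take the base changes $p_k^*\Ecal_{\min}$, $k\ge 2$, of Remark~\ref{Rmk:Nonreducedness}. There $\tau_2\cong\Ocal/(z_1,z_2,z_3,t^k)$, so $\Kcal er(t)\subset t\tau_2$ and the map vanishes; these families are in fact trivial to first order. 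Hence surjectivity is not forced even for $\Ccal_{\min}$. More generally, no obstruction built only from the first-order deformation class can rule out $e_1^*$: any smoothing can be replaced by a ramified base change with zero Kodaira--Spencer class. Your fallback, that a flat lift must have a curve in its rank-drop locus, restates what has to be proved.

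The missing idea is to pass to a syzygy, which returns you to rank two where Theorem~\ref{thm:bubbling-rigidity} applies.
\begin{enumerate}
\item In $\Fcal_{\min}$ one has $z_1e_1=-z_2e_2-z_3^2e_3$, so $\Gcal=\Kcal er(e_1^*)$ is generated by the four sections $z_2e_1,z_3e_1,e_2,e_3$.
\item Suppose $\Ecal$ were a smoothing of $\Gcal$. By Nakayama, lift these sections to a surjection $\Ocal^{\oplus 4}\to\Ecal$ near $(o,0)$, and let $\Hcal$ be its kernel.
\item Flatness of $\Ecal$ over $\Delta$ shows that $\left.\Hcal\right|_B$ is the syzygy $\Pcal=\Kcal er(\Ocal_B^{\oplus 4}\to\Gcal)$, and that $\Hcal_t$ is locally free of rank two for $t\neq 0$.
\item $\Pcal$ is a rank-two reflexive sheaf with $\Ecal xt^1(\Pcal,\Ocal_B)\cong\Ecal xt^2(\Gcal,\Ocal_B)\cong\Ecal xt^3(\Ocal_B/\mathfrak m,\Ocal_B)$, which has length one.
\item So $\Hcal$ would be an isolated point degeneration whose central fiber is reflexive with odd Ext-length. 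This contradicts Theorem~\ref{thm:bubbling-rigidity}, or equivalently Corollary~\ref{OddParityObstruction}.
\end{enumerate}
For $e_3^*$ the same construction gives no contradiction: $\Ccal_{\min}$ needs five generators, so the syzygy has rank three. This is what separates the two orbits that your numerical invariants could not distinguish.
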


\begin{rmk}
\begin{itemize}
    \item The reflexive sheaf $\Fcal_{\min}$ is the local model underlying the
example discovered in \cite{ChenSun:18}: the original admissible HYM
connection has a genuine essential singularity at the point, represented
on the sheaf side by the non-locally-free reflexive hull above, while its
analytic tangent-cone connection is flat. Thus the essential singularity
is not visible at the tangent-cone level. The global examples in the next section realize the same elementary
modification
$$
\Ccal_{\min}\subset \Fcal_{\min}
$$
as a degeneration of smooth HYM connections modulo gauge transformations.
\item The theorem also illustrates the subtlety of smoothability: the bubbling identity fixes the algebraic bubbling multiplicity, equivalently
the length of $\Ccal^{**}/\Ccal$, but not every elementary modification with
that length is smoothable.
\end{itemize}

\end{rmk}

\subsection{Proof of Proposition \ref{thm-explicit-smoothable}: a general smoothing construction}
Recall from the above that we consider three holomorphic functions $f_1,f_2,f_3$ defined near $0\in B$,
and assume that
$$
        \sqrt{(f_1,f_2,f_3^2)}=(z_1,z_2,z_3).
$$
Define a reflexive sheaf $\Fcal$ near $0\in B$ by
$$
0\rightarrow \Ocal_B
\xrightarrow{
\begin{pmatrix}
f_1\\
f_2\\
f_3^2
\end{pmatrix}}
\Ocal_B^{\oplus 3}
\rightarrow \Fcal
\rightarrow 0 .
$$
Then
$$
        \Ecal xt^1(\Fcal,\Ocal_B)
        \cong
        \Ocal_B/(f_1,f_2,f_3^2).
$$
The choice of $f_3^2$ is made so that the Ext-length is twice the length
of the elementary modification predicted by
Theorem~\ref{thm:bubbling-rigidity}.  Indeed, if $\Fcal$ occurs as the
reflexive hull $\Ccal^{**}$ of a smoothable central fiber $\Ccal$ in family $\Ecal$, then
Theorem~\ref{thm:bubbling-rigidity} forces
$$
        l(\Ccal^{**}/\Ccal)
        =
        \frac{1}{2}l\bigl(\Ecal xt^1(\Ccal,\Ocal_B)\bigr).
$$
In the present situation this required length is
$$
        l\bigl(\Ocal_B/(f_1,f_2,f_3)\bigr).
$$

This suggests the following elementary modification.  Define $\Ccal$ by
$$
0\rightarrow \Ocal_B
\xrightarrow{
\begin{pmatrix}
f_1\\
f_2\\
f_3^2
\end{pmatrix}}
\Ocal_B^{\oplus 2}\oplus (f_1,f_2,f_3)
\rightarrow \Ccal
\rightarrow 0 .
$$
Then
$$
        \Ccal^{**}/\Ccal
        \cong
        \Fcal/\Ccal
        \cong
        \Ocal_B/(f_1,f_2,f_3).
$$
Thus $\Ccal$ has exactly the algebraic bubbling multiplicity required by the
rigidity identity, and is the natural candidate to smooth.

To construct such a smoothing, we replace the ideal $(f_1,f_2,f_3)$ by
its Koszul complex
$$
0\rightarrow \Ocal_B
\xrightarrow{
\begin{pmatrix}
f_1\\
f_2\\
f_3
\end{pmatrix}}
\Ocal_B^{\oplus 3}
\xrightarrow{\Phi}
\Ocal_B^{\oplus 3}
\xrightarrow{
\begin{pmatrix}
f_3 & f_2 & f_1
\end{pmatrix}}
(f_1,f_2,f_3)
\rightarrow 0,
$$
where
$$
\Phi=
\begin{pmatrix}
f_2&-f_1&0\\
-f_3&0&f_1\\
0&f_3&-f_2
\end{pmatrix}.
$$This gives a locally free resolution of $\Ccal$:
$$
0\rightarrow \Ocal_B
\xrightarrow{
\begin{pmatrix}
f_1\\
f_2\\
f_3\\
0
\end{pmatrix}}
\Ocal_B^{\oplus 3}\oplus \Ocal_B
\xrightarrow{\Psi}
\Ocal_B^{\oplus 3}\oplus \Ocal_B^{\oplus 2}
\rightarrow \Ccal
\rightarrow 0,
$$
where
$$
\Psi=
\begin{pmatrix}
f_2&-f_1&0&f_3\\
-f_3&0&f_1&0\\
0&f_3&-f_2&0\\
0&0&0&f_2\\
0&0&0&f_1
\end{pmatrix}.
$$

Viewing the functions $f_i$ as pulled back to $X=B\times\Delta$, we now
deform the first map by introducing the parameter $t$:
$$
\begin{pmatrix}
f_1\\
f_2\\
f_3\\
t
\end{pmatrix}.
$$
A direct computation gives the following exact complex on $X$:
$$
0\rightarrow \Ocal
\xrightarrow{
\begin{pmatrix}
f_1\\
f_2\\
f_3\\
t
\end{pmatrix}}
\Ocal^{\oplus 3}\oplus \Ocal
\xrightarrow{M}
\Ocal^{\oplus 3}\oplus \Ocal^{\oplus 2}
\rightarrow \Ecal
\rightarrow 0,
$$
where
$$
M=
\begin{pmatrix}
f_2&-f_1&-t&f_3\\
-f_3&0&f_1&0\\
0&f_3&-f_2&0\\
0&-t&0&f_2\\
-t&0&0&f_1
\end{pmatrix}.
$$

The main point is the following.

\begin{prop}\label{SmoohtlizeAClassOfSingularities}
The family $\Ecal$ has locally free general fibers, and its central fiber
$\Ccal$ has an isolated point singularity at the origin. Moreover,
$\Ccal$ fits into
$$
0\rightarrow \Ocal_B
\xrightarrow{
\begin{pmatrix}
f_1\\
f_2\\
f_3^2
\end{pmatrix}}
\Ocal_B^{\oplus 2}\oplus (f_1,f_2,f_3)
\rightarrow \Ccal
\rightarrow 0.
$$
\end{prop}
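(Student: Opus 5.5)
The plan has three parts: first check that the complex on $X$ is exact, then restrict it to the central fiber and identify $\Ccal$, and finally show that the fibers $\Ecal_t$ with $t\neq 0$ are locally free.

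\textbf{Exactness on $X$.} The composite $M\cdot(f_1,f_2,f_3,t)^T$ vanishes by a direct row-by-row check; for example, the first row gives $f_2f_1-f_1f_2-tf_3+f_3t=0$. The first map is injective, since $t$ is a nonzerodivisor. For exactness in the middle I will use the Buchsbaum--Eisenbud criterion. Its hypothesis is that the ideal of maximal minors of the first map, namely $(f_1,f_2,f_3,t)$, and the ideal $I_3(M)$ of $3\times 3$ minors of $M$ (its expected rank is $4-1=3$) both have depth at least $1$ and $2$ respectively. Since $\sqrt{(f_1,f_2,f_3)}=\mathfrak m_B$, the radical of $(f_1,f_2,f_3,t)$ is $\mathfrak m_X$, which has depth $4$. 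For $I_3(M)$, I will exhibit explicit minors: after deleting suitable rows and columns one finds minors such as $t^3$, and others that are powers of $f_i$ up to sign. I then expect $\sqrt{I_3(M)}\supseteq(f_1,f_2,f_3,t)$, so its depth is also $4$. It follows that $\Ecal$ has homological dimension at most $2$. It is also locally free exactly where the rank of $M$ is $3$, which is away from $(o,0)$, so the non-locally-free locus is contained in $\{(o,0)\}$.

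\textbf{Restriction to the central fiber.} Setting $t=0$ turns $M$ into $\Psi$ and the first map into $(f_1,f_2,f_3,0)^T$. Exactness of the restricted complex follows from the argument of Lemma~\ref{RestrictionOfResolution}: the image sheaves are subsheaves of free sheaves, hence $t$-torsion-free, and $\Ecal$ itself has no $t$-torsion because its resolution stays exact after restriction. Alternatively, one can apply Buchsbaum--Eisenbud again on $B$, where the radicals of the minor ideals are $\mathfrak m_B$, of depth $3$. Hence $\Ccal=\Ccal oker(\Psi)$. Splitting the middle term as $\Ocal_B^{3}\oplus\Ocal_B$, the first three columns of $\Psi$ together with the Koszul relation give a presentation of $(f_1,f_2,f_3)$ on the first three generators. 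The fourth column $(f_3,0,0,f_2,f_1)^T$ imposes one extra relation. Under the map $e_1\mapsto f_3,\ e_2\mapsto f_2,\ e_3\mapsto f_1$, this relation becomes $f_3\cdot f_3\in(f_1,f_2,f_3)$ paired with $(f_2,f_1)$ in $\Ocal_B^2$. After reordering summands, this is exactly the displayed sequence with the column $(f_1,f_2,f_3^2)^T$, and that presentation gives $\Ccal$ as the quotient of $\Ocal_B^{2}\oplus(f_1,f_2,f_3)$ by it. Away from the origin the ideal is trivial and $(f_1,f_2,f_3^2)$ has no common zero, so $\Ccal$ is locally free there, of rank two. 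At the origin $\Ccal$ is not locally free, because $\Ccal^{**}/\Ccal\cong\Ocal_B/(f_1,f_2,f_3)\neq0$. Torsion-freeness will follow from the inclusion into $\Fcal$, which I obtain via the snake lemma applied to $(f_1,f_2,f_3)\subset\Ocal_B$.

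\textbf{General fibers.} For $t\neq0$ I need $\Ecal_t$ to be locally free on $B_t$. Here the first map $(f_1,f_2,f_3,t)^T$ is nowhere vanishing, so the restricted first map splits. I then have to show that $M|_{B_t}$ has rank exactly $3$ at every point, that is, that $I_3(M)$ generates the unit ideal on $B_t$. This is the main obstacle: one must produce $3\times3$ minors involving $t$ whose common zero locus meets $B_t$ nowhere, including at points where all the $f_i$ vanish. My first attempt is to find minors equal to $\pm t^3$, or $t^2$ times a unit-like entry, by choosing the rows containing the $-t$ entries (rows $1,4,5$ with columns $3,2,1$). That minor is $\pm t^3$, which is a unit for $t\neq0$. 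This settles local freeness of the general fibers, and with it the full proposition.
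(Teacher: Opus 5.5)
Your proposal is correct and follows the paper's proof. The $3\times 3$ minor of $M$ on rows $1,4,5$ and columns $1,2,3$ equals $t^3$, which gives local freeness for $t\neq 0$; setting $t=0$ recovers $\Psi$ and hence $\Ccal$; and torsion-freeness and non-local-freeness at $o$ come from $\Ccal\subset\Fcal$ with quotient $\Ocal_B/(f_1,f_2,f_3)$. Your Buchsbaum--Eisenbud check of exactness on $X$ goes beyond what the paper writes, but two small slips should be fixed: you swapped the grade conditions (it is $(f_1,f_2,f_3,t)$ that needs grade $\geq 2$ and $I_3(M)$ grade $\geq 1$, harmless since both are $\mathfrak m_X$-primary), and your appeal to Lemma~\ref{RestrictionOfResolution} is circular, though your alternative on $B$ works and right exactness of $\otimes\,\Ocal_B$ already gives $\Ccal\cong\operatorname{Coker}(\Psi)$.
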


\begin{proof}
We first show that $M$ has rank $3$ on $X\setminus\{(o,0)\}$. The vector
$(f_1,f_2,f_3,t)^T$ lies in $\Kcal er(M)$, so the rank is at most $3$.
For $t=0$, the claim follows from the resolution of $\Ccal$ above. It
remains to consider the case $t\neq 0$. The minor
$$
\begin{pmatrix}
f_2&-f_1&-t\\
0&-t&0\\
-t&0&0
\end{pmatrix}
$$
has determinant $t^3$, hence is nonzero. Thus $M$ has rank $3$ away
from $(o,0)$.

It follows that the cokernel $\Ecal$ is locally free away from $(o,0)$.
Specializing the displayed complex at $t=0$ recovers the resolution of
$\Ccal$ above, so the central fiber of $\Ecal$ is $\Ccal$. Since $\Ccal$
is the elementary modification of the torsion-free sheaf $\Fcal$ described
above, it is torsion-free and has an isolated singularity at the origin.
\end{proof}

As a direct corollary, a fixed reflexive singularity may admit more than
one smoothable elementary modification.

\begin{cor}\label{Cor-nonuniqueness}
Let $\Fcal$ be the rank-two reflexive sheaf on $B$ defined by
$$
0\rightarrow \Ocal_B
\xrightarrow{
\begin{pmatrix}
z_1\\
z_2^4\\
z_3^2
\end{pmatrix}}
\Ocal_B^{\oplus 3}
\rightarrow \Fcal
\rightarrow 0.
$$
Then $\Fcal$ admits at least two nonisomorphic smoothable elementary
modifications at the origin.
\end{cor}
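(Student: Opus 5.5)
We plan to apply Proposition~\ref{thm-explicit-smoothable} to $\Fcal$ with two different presentations of the same reflexive sheaf. The first choice is $f_1=z_1$, $f_2=z_2^4$, $f_3=z_3$. Then $(f_1,f_2,f_3^2)=(z_1,z_2^4,z_3^2)$, and this produces a smoothable elementary modification $\Ccal_1=\Kcal er(q_1)$ with quotient $\Ocal_B/(z_1,z_2^4,z_3)$. The second choice uses the symmetry between the last two entries. We permute the roles of the generators, taking $f_1=z_1$, $f_2=z_3^2$ and $f_3=z_2^2$, so that $f_3^2=z_2^4$. The defining column becomes $(z_1,z_3^2,z_2^4)^T$, which differs from $(z_1,z_2^4,z_3^2)^T$ only by a permutation of the basis of $\Ocal_B^{\oplus 3}$. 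So the cokernel is the same sheaf $\Fcal$ up to an isomorphism that swaps the second and third factors. Proposition~\ref{thm-explicit-smoothable} then gives a second smoothable modification $\Ccal_2=\Kcal er(q_2)$, where $q_2:\Fcal\to\Ocal_B/(z_1,z_3^2,z_2^2)$ is induced by projection onto the factor carrying $z_2^4$. Both quotients have length $4=\tfrac12\cdot 8$, as Theorem~\ref{thm:bubbling-rigidity} requires.

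It remains to show $\Ccal_1\not\cong\Ccal_2$. Any isomorphism $\Ccal_1\cong\Ccal_2$ extends to the reflexive hulls, giving an automorphism of $\Fcal$. That automorphism then induces an isomorphism of the quotients $\Fcal/\Ccal_1\cong\Fcal/\Ccal_2$ as $\Ocal_B$-modules. So it suffices to show these quotients are not isomorphic. They are cyclic modules $\Ocal_B/(z_1,z_2^4,z_3)$ and $\Ocal_B/(z_1,z_2^2,z_3^2)$, and two cyclic modules are isomorphic exactly when their annihilators coincide. The annihilators are the distinct ideals $(z_1,z_2^4,z_3)$ and $(z_1,z_2^2,z_3^2)$, so the quotients differ. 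A cruder invariant also works: the first quotient is curvilinear, while the second has $\dim\mathfrak m/(\mathfrak m^2+I)=2$.

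The main obstacle is making the "in particular" step honest. If "nonisomorphic" is read as abstract isomorphism of $\Ccal_1$ and $\Ccal_2$ as sheaves, the hull argument above handles it, because the double dual is functorial and $\Ccal_i^{**}=\Fcal$ canonically. If instead the intended notion is isomorphism of pairs $\Ccal\subset\Fcal$ up to automorphisms of $\Fcal$ (and even local coordinate changes), the quotient-module invariant still distinguishes them. A coordinate change carries $\Ocal_B/I$ to $\Ocal_B/\phi^*I$, and it preserves both the length and the embedding dimension of the quotient, which are $(4,1)$ and $(4,2)$ respectively. I would present the embedding-dimension argument, since it is robust under every reasonable notion of equivalence.
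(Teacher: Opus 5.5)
Your proposal is correct and follows the paper's proof. It uses the same two choices $(f_1,f_2,f_3)=(z_1,z_2^4,z_3)$ and $(z_1,z_3^2,z_2^2)$ in Proposition~\ref{thm-explicit-smoothable}, and distinguishes them by the non-isomorphic quotients $\Ocal_B/(z_1,z_2^4,z_3)$ and $\Ocal_B/(z_1,z_2^2,z_3^2)$. Your extension of an abstract isomorphism $\Ccal_1\cong\Ccal_2$, via functoriality of the double dual, to an automorphism of $\Fcal$ carrying $\Ccal_1$ onto $\Ccal_2$ usefully makes explicit the ``in particular'' step that the paper leaves unstated.
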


\begin{proof}
There are two inequivalent ways to realize the same reflexive sheaf in
the form above. One may take
$$
        (f_1,f_2,f_3)=(z_1,z_2^4,z_3),
$$
or, after swapping the second and third summands in the presentation of
$\Fcal$,
$$
        (f_1,f_2,f_3)=(z_1,z_3^2,z_2^2).
$$
The construction gives smoothable elementary modifications
$\Ccal_1,\Ccal_2\subset \Fcal$. Their finite-length quotients are
$$
        \Fcal/\Ccal_1\cong \Ocal_B/(z_1,z_2^4,z_3),
        \qquad
        \Fcal/\Ccal_2\cong \Ocal_B/(z_1,z_3^2,z_2^2).
$$
These quotients are not isomorphic. In particular, $\Ccal_1$ and $\Ccal_2$ are not isomorphic. 
\end{proof}
\subsection{The minimal isolated singularity}
\label{subsec:minimal-singularity}

We now specialize the construction to the minimal case, i.e. $m^{\mathrm{alg}}=1$. Let $\Fcal_{\min}$ be the rank-two reflexive sheaf on $B$ defined by
$$
0\rightarrow \Ocal_B
\xrightarrow{
\begin{pmatrix}
z_1\\
z_2\\
z_3^2
\end{pmatrix}}
\Ocal_B^{\oplus 3}
\rightarrow \Fcal_{\min}
\rightarrow 0 .
$$
Let $e_1,e_2,e_3$ be the images in $\Fcal_{\min}$ of the three standard
generators of $\Ocal_B^{\oplus 3}$. Define
$$
        q_{\min}:\Fcal_{\min}\rightarrow \Ocal_B/(z_1,z_2,z_3)
$$
by
$$
        q_{\min}(e_1)=0,\qquad q_{\min}(e_2)=0,\qquad q_{\min}(e_3)=1,
$$
and set
$$
        \Ccal_{\min}:=\Kcal er(q_{\min}).
$$

\begin{exam}\label{SimplestFertileFamily}
The family produced by the construction above can be written explicitly as
$$
0\rightarrow \Ocal
\xrightarrow{
\begin{pmatrix}
z_1\\
z_2\\
z_3\\
t
\end{pmatrix}}
\Ocal^{\oplus 4}
\xrightarrow{\Phi}
\Ocal^{\oplus 5}
\rightarrow \Ecal_{\min}
\rightarrow 0,
$$
where
$$
M=
\begin{pmatrix}
z_2&-z_1&-t&z_3\\
-z_3&0&z_1&0\\
0&z_3&-z_2&0\\
0&-t&0&z_2\\
-t&0&0&z_1
\end{pmatrix}.
$$
Then $\Ecal_{\min}$ degenerates to an isolated point singularity at the
origin, and its central fiber is $\Ccal_{\min}$. In particular,
$$
        \Ccal_{\min}^{**}\cong \Fcal_{\min},
        \qquad
        \Ccal_{\min}^{**}/\Ccal_{\min}
        \cong \Ocal_B/(z_1,z_2,z_3) .
$$
\end{exam}

The next result shows that, in the minimal case, no other reflexive
singularity can occur.

\begin{lem}\label{lem:minimal-reflexive-hull}
Suppose $\Ecal$ is a family degenerating to an isolated point singularity
at the origin with $m^{\mathrm{alg}}=1$. Then, after a local analytic
change of coordinates at the origin,
$$
        \Ccal^{**}\cong \Fcal_{\min}.
$$
\end{lem}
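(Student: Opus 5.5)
By Theorem~\ref{thm:bubbling-rigidity}, $m^{\mathrm{alg}}=1$ gives $m_0^{\mathrm{alg}}=2$. By Lemma~\ref{Lemma-Defect only depends on reflexive hull} this means $l(\Ecal xt^1(\Ccal^{**},\Ocal_B))=2$. So the plan is to classify the rank-two reflexive sheaves $\Fcal=\Ccal^{**}$ on $B$ that are locally free away from the origin and satisfy $l(\Ecal xt^1(\Fcal,\Ocal_B))=2$, and to show that each such $\Fcal$ is $\Fcal_{\min}$ after a coordinate change.

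\textbf{Step 1: minimal presentation.} Since $\Fcal$ is reflexive on the smooth threefold $B$, we have $hd(\Fcal)\le 1$. This gives a minimal free presentation
$$0\to\Ocal_B^{r}\xrightarrow{\phi}\Ocal_B^{r+2}\to\Fcal\to 0,$$
with all entries of $\phi$ in $\mathfrak m_B$. Dualizing gives $\Ecal xt^1(\Fcal,\Ocal_B)\cong\Ocal_B^r/\Img(\phi^t)$. By minimality, tensoring with $\Ocal_B/\mathfrak m_B$ shows that this module needs exactly $r$ generators. A finite-length module of length $2$ needs at most $2$ generators. If it needs exactly $2$, it is $(\Ocal_B/\mathfrak m_B)^{2}$, so $\Img(\phi^t)=\mathfrak m_B\Ocal_B^2$. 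But $\mathfrak m_B\Ocal_B^2$ needs $6$ generators, which is more than the $r+2=4$ columns of $\phi^t$. Hence $r=1$.

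\textbf{Step 2: normal form.} With $r=1$, the map is $\phi=(g_1,g_2,g_3)^T$ with $g_i\in\mathfrak m_B$, and $\Ecal xt^1(\Fcal,\Ocal_B)\cong\Ocal_B/(g_1,g_2,g_3)$ has length $2$. Because $\Fcal$ is locally free away from $o$ and torsion free, the $g_i$ have the origin as their only common zero, so they form a regular sequence generating an $\mathfrak m_B$-primary ideal $I$ of colength $2$. Any colength-two ideal contains $\mathfrak m_B^2$, and $\mathfrak m_B/I$ is one-dimensional. Therefore $I$ contains two elements whose linear parts are independent. After a local analytic coordinate change these become $z_1$ and $z_2$, so $I=(z_1,z_2,z_3^2)$.

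\textbf{Step 3: matching the presentation.} Since $I$ has colength $2$, the $g_i$ span $I/\mathfrak m_B I$, which has dimension $3$. Hence there is an invertible matrix $A\in GL_3(\Ocal_B)$ with $A(g_1,g_2,g_3)^T=(z_1,z_2,z_3^2)^T$. The isomorphism $A$ of $\Ocal_B^3$ then induces $\Fcal\cong\Fcal_{\min}$.

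The main obstacle is the regular-sequence and normal-form step: verifying that the three generators carry exactly the right linear parts after the coordinate change. The count $r=1$ in Step 1 is delicate too, but it follows from the generator-count argument given there.
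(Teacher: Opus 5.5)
Your proposal is correct and follows essentially the same route as the paper's proof: the rigidity identity together with the lemma that $m_0^{\mathrm{alg}}$ depends only on $\Ccal^{**}$ gives Ext-length two, the Nakayama generator count excludes $r=2$, and the colength-two ideal is normalized to $(z_1,z_2,z_3^2)$ before a change of basis of $\Ocal_B^{\oplus 3}$. The differences are cosmetic. You use a possibly nonlinear analytic coordinate change, where the paper uses a linear one together with $\mathfrak m_B^2\subset I$. You also spell out the $GL_3(\Ocal_B)$ basis change via $I/\mathfrak m_B I$; its dimension being three is better justified by $I=(z_1,z_2,z_3^2)$ or by Krull's height theorem than by the colength alone.
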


\begin{proof}
By Theorem~\ref{thm:bubbling-rigidity},
$l(\Ecal xt^1(\Ccal,\Ocal_B))=2$. Set $\Gcal=\Ccal^{**}$. By
Lemma~\ref{Lemma-Defect only depends on reflexive hull},
$l(\Ecal xt^1(\Gcal,\Ocal_B))=2$.

Since $\Gcal$ is reflexive but not locally free at the origin, it has a
minimal presentation near the origin
$$
0\rightarrow \Ocal_B^{\oplus r}
\xrightarrow{\Phi}
\Ocal_B^{\oplus r+2}
\rightarrow \Gcal
\rightarrow 0 .
$$
Here $r\geq 1$, and minimality means that all entries of $\Phi$ lie in
$(z_1,z_2,z_3)$. Dualizing gives
$$
        \Ecal xt^1(\Gcal,\Ocal_B)
        \cong
        \Ocal_B^{\oplus r}/\Img(\Phi^t).
$$
Since $\Img(\Phi^t)\subset (z_1,z_2,z_3)\Ocal_B^{\oplus r}$ and this
quotient has length two, one has $1\leq r\leq 2$.

\begin{clm}\label{clm:r-one}
$r=1$.
\end{clm}

Assuming the claim, write
$$
        \Phi=
        \begin{pmatrix}
        f_1\\
        f_2\\
        f_3
        \end{pmatrix}.
$$
Then $\Ocal_B/(f_1,f_2,f_3)$ has length two. Let
$I=(f_1,f_2,f_3)$. Since $\Ocal_B/I$ is an Artinian local ring of length
two, $(z_1,z_2,z_3)^2\subset I$. Moreover, the image of $I$ in
$(z_1,z_2,z_3)/(z_1,z_2,z_3)^2$ has dimension two. After a linear
change of coordinates, we may assume that this image is spanned by the
classes of $z_1$ and $z_2$. Hence
$$
        I=(z_1,z_2)+(z_1,z_2,z_3)^2=(z_1,z_2,z_3^2).
$$
After changing the basis of $\Ocal_B^{\oplus 3}$, the presentation of
$\Gcal$ becomes
$$
0\rightarrow \Ocal_B
\xrightarrow{
\begin{pmatrix}
z_1\\
z_2\\
z_3^2
\end{pmatrix}}
\Ocal_B^{\oplus 3}
\rightarrow \Gcal
\rightarrow 0 .
$$
Thus $\Gcal\cong \Fcal_{\min}$, proving the lemma up to the claim.
\end{proof}

\begin{proof}[Proof of the claim]
Suppose $r=2$. Since
$\Ocal_B^{\oplus 2}/\Img(\Phi^t)$ has length two and
$\Img(\Phi^t)\subset (z_1,z_2,z_3)\Ocal_B^{\oplus 2}$, one must have
$$
        \Img(\Phi^t)=(z_1,z_2,z_3)\Ocal_B^{\oplus 2}.
$$
Thus $(z_1,z_2,z_3)\Ocal_B^{\oplus 2}$ is generated by the four columns
of $\Phi^t$. This contradicts Nakayama's lemma, since
$$
        \dim_{\CBbb}
        (z_1,z_2,z_3)\Ocal_B^{\oplus 2}/
        (z_1,z_2,z_3)^2\Ocal_B^{\oplus 2}
        =6 .
$$
Hence $r=1$.
\end{proof}

We now identify the central fiber itself, not only its reflexive hull. Let
$$
        q:\Fcal_{\min}\to \Ocal_B/(z_1,z_2,z_3)
$$
be a length-one quotient, and write
$$
        q(e_1)=a,\qquad q(e_2)=b,\qquad q(e_3)=c .
$$

\begin{lem}\label{lem:nonstandard-length-one-quotient}
Suppose $(a,b)\neq (0,0)$, then $\Kcal er(q)$ is not smoothable.
\end{lem}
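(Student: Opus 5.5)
The plan is to exploit the minimal number of generators of $\Ker(q)$. I will show that when $(a,b)\neq(0,0)$ the sheaf $\Ker(q)$ has a minimal free resolution of shape $0\to\Ocal_B\to\Ocal_B^{3}\to\Ocal_B^{4}$. Then I will show that any smoothing would force a locally free resolution of $\Ecal$ of the same shape, whose first map is a column of three functions. Such a column can only vanish along a curve in the fourfold $X$, so $\Ecal$ could not be locally free off $(o,0)$. By contrast, the standard model $\Ccal_{\min}$ needs five generators: the extra generator allows the first map $(z_1,z_2,z_3,t)^T$ of Example~\ref{SimplestFertileFamily}, which cuts out only the point $(o,0)$.

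\emph{Step 1: normal form.} Since $q$ is a nonzero map to $\Ocal_B/\mathfrak m_B$, the triple $(a,b,c)$ consists of constants, not all zero, and every such triple defines a well-defined quotient. Using $GL_2$ acting on $(e_1,e_2)$, together with the inverse linear change on $(z_1,z_2)$, I reduce to $(a,b)=(1,0)$. The relation $z_1e_1+z_2e_2+z_3^2e_3=0$ is preserved under this change. Next I replace $e_3$ by $e_3'=e_3-ce_1$ and $z_1$ by $z_1'=z_1+cz_3^2$. This keeps the relation in the standard form and gives $q(e_3')=0$. Hence we may assume $(a,b,c)=(1,0,0)$.

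\emph{Step 2: presentation of the candidate.} With $(a,b,c)=(1,0,0)$, $\Ker(q)$ is generated by $e_2,e_3,z_1e_1,z_2e_1,z_3e_1$. The generator $z_1e_1=-z_2e_2-z_3^2e_3$ is redundant, so $\Ker(q)$ has exactly four minimal generators $e_2,e_3,u=z_2e_1,v=z_3e_1$. I then compute the syzygies among these generators, namely $z_3u-z_2v=0$, $z_1u+z_2^2e_2+z_2z_3^2e_3=0$, and $z_1v+z_2z_3e_2+z_3^3e_3=0$. Their second syzygy is the Koszul one $(z_1,z_2,z_3)$, because all the relation entries lie in $\mathfrak m_B$. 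This gives the minimal resolution
$$0\to\Ocal_B\to\Ocal_B^{3}\to\Ocal_B^{4}\to\Ker(q)\to 0 .$$
As a consistency check, $\Ecal xt^2(\Ker(q),\Ocal_B)$ has length $1$, matching the proof of Theorem~\ref{thm:bubbling-rigidity}.

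\emph{Step 3: lifting to the family.} Suppose $\Ker(q)\cong\Ccal$ for some family $\Ecal$. By Lemma~\ref{ReflexiveProperty}, $\Ecal$ is reflexive on the fourfold $X$, so $hd(\Ecal)\le 2$. Take a minimal resolution $0\to\Ocal^{a}\to\Ocal^{a+2}\to\Ocal^{n}\to\Ecal\to 0$ at $(o,0)$. By Lemma~\ref{RestrictionOfResolution} it restricts to a resolution of $\Ccal$ whose matrices still have entries in the maximal ideal. Since $t$ lies in the maximal ideal, the restricted resolution is again minimal. Uniqueness of minimal resolutions then gives $n=4$ and $a=1$. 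So the first map of the resolution of $\Ecal$ is a column $(g_1,g_2,g_3)^T$ whose restriction to $t=0$ is the Koszul column, up to units.

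\emph{Step 4: contradiction.} From the resolution, $\Ecal xt^2(\Ecal,\Ocal)\cong\Ocal/(g_1,g_2,g_3)$. Its support $\{g_1=g_2=g_3=0\}$ has dimension at least $1$ in $X$ by Krull's principal ideal theorem; indeed it is a curve through $(o,0)$ that meets $B_0$ transversally. On the other hand, $\Ecal$ is locally free off $(o,0)$, so $\Ecal xt^2(\Ecal,\Ocal)$ is supported at $(o,0)$. This contradiction shows that $\Ker(q)$ is not smoothable.

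The main obstacle I expect is Step 2: verifying that the three listed syzygies generate all relations and that the resolution is minimal with exactly four generators, rather than missing a hidden relation. I would check this with an Euler characteristic and length count against $\Ecal xt^1$ and $\Ecal xt^2$ of $\Ker(q)$, using the lengths $2$ and $1$ forced by the short exact sequence $0\to\Ker(q)\to\Fcal_{\min}\to\Ocal_B/\mathfrak m_B\to 0$.
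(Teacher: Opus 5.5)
Your proof is correct, but it takes a different route from the paper's. You share the paper's first step, the normalization to $q(e_1)=1$, $q(e_2)=q(e_3)=0$. After that, the paper never computes a full resolution. It lets $\Pcal$ be the kernel of $\Ocal_B^{\oplus 4}\to\Kcal er(q)$ and notes that $\Pcal$ is reflexive with $\Ecal xt^1(\Pcal,\Ocal_B)\cong\Ecal xt^2(\Kcal er(q),\Ocal_B)\cong\Ocal_B/\mathfrak m_B$. For a putative smoothing $\Ecal$, the kernel $\Hcal$ of a lifted surjection $\Ocal^{\oplus 4}\to\Ecal$ restricts, by flatness, to $\Pcal$ and so smooths it. The paper then rules this out with the parity obstruction of Corollary~\ref{OddParityObstruction}, i.e.\ with Theorem~\ref{thm:bubbling-rigidity}.

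You instead compute the minimal resolution of $\Kcal er(q)$ explicitly, and your Step 2 is right. A relation $\alpha e_2+\beta e_3+\gamma u+\delta v=0$ means $(\gamma z_2+\delta z_3,\alpha,\beta)=h\,(z_1,z_2,z_3^2)$. This forces $h\in(z_2,z_3)$ and yields exactly your three relations, whose syzygy module is generated by one Koszul column in $z_1,z_2,z_3$. You then transport the Betti numbers $(4,3,1)$ to $\Ecal$, because $t$ is regular on $\Ecal$ and on its syzygies, and finish with Krull's height theorem.

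In effect you show that the paper's $\Hcal$ would be the cokernel of a column of three functions on the fourfold $X$, which cannot be locally free off a point. This is the case $r=1$ of Corollary~\ref{NotFreeQuotient}, obtained by a dimension count rather than through the rigidity identity.

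\textbf{What each route buys.} Yours is independent of Theorem~\ref{thm:bubbling-rigidity}. It also explains structurally why $\Ccal_{\min}$ escapes the obstruction: its resolution has Betti numbers $(5,4,1)$ with last map $(z_1,z_2,z_3,t)^T$, which can cut out a point. The paper's route needs less computation (four generators and one Ext sheaf), and it shows the parity obstruction at work on a syzygy sheaf.

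\textbf{Two cosmetic points.} In Step 3, the shape $0\to\Ocal^{a}\to\Ocal^{a+2}\to\Ocal^{n}$ is consistent with rank two only when $n=4$. Write $0\to\Ocal^{a}\to\Ocal^{n+a-2}\to\Ocal^{n}$ before invoking uniqueness. Also, Step 4 uses only $g_i\in\mathfrak m_X$, not the identification of the restricted column with the Koszul column.
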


\begin{proof}
After a linear change of the first two generators, together with the
corresponding linear change of $z_1,z_2$, we may assume $a=1$ and $b=0$.
Replacing $e_3$ by $e_3-c e_1$ and replacing $z_1$ by $z_1+c z_3^2$, we
may further assume $c=0$. Thus it is enough to consider the quotient
$$
        q(e_1)=1,\qquad q(e_2)=q(e_3)=0.
$$
Set $\Gcal=\Kcal er(q)$. Then $\Gcal$ is generated by
$$
        z_2e_1,\qquad z_3e_1,\qquad e_2,\qquad e_3 .
$$
Let $\Pcal$ be the kernel of the induced surjection
$$
        \Ocal_B^{\oplus 4}\to \Gcal .
$$
Then $\Pcal$ is a rank-two reflexive sheaf with an isolated singularity at
the origin, and
$$
        \Ecal xt^1(\Pcal,\Ocal_B)\cong \Ecal xt^2(\Gcal, \Ocal_B)\cong \Ocal_B/(z_1,z_2,z_3) .
$$

Suppose that $\Gcal$ were smoothable, and let $\Ecal$ be a smoothing.
Lifting the four generators of $\Gcal$ to local sections of $\Ecal$ gives,
by Nakayama's lemma, a surjection
$$
        \Ocal^{\oplus 4}\to \Ecal
$$
near $(o,0)$. Let $\Hcal$ be its kernel. Since $\Ecal$ is flat over
$\Delta$, restriction to the central fiber gives
$$
0\rightarrow \left.\Hcal\right|_B
\rightarrow \Ocal_B^{\oplus 4}
\rightarrow \Gcal
\rightarrow 0 .
$$
Hence $\left.\Hcal\right|_B\cong \Pcal$. Moreover, $\Hcal_t$ is locally
free of rank two for $t\neq 0$, and $\Hcal$ is locally free away from
$(o,0)$. Thus $\Pcal$ would be smoothable. This contradicts
Corollary~\ref{OddParityObstruction}, since
$l(\Ecal xt^1(\Pcal,\Ocal_B))=1$. Therefore $\Gcal$ is not smoothable.
\end{proof}

\begin{lem}[Uniqueness of the smoothable elementary modification]
\label{lem:unique-smoothable-modification}
If $\Gcal\subset \Fcal_{\min}$ is a smoothable elementary modification at the
origin whose quotient has finite length, then $\Gcal=\Ccal_{\min}$.
\end{lem}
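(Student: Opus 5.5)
The plan is to first pin down the length of the quotient, and then use Lemma~\ref{lem:nonstandard-length-one-quotient} to single out the quotient map.

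\textbf{Step 1: the quotient has length one.} Let $\Gcal\subset \Fcal_{\min}$ be a smoothable elementary modification, realized as the central fiber $\Ccal$ of some family $\Ecal$. Since $\Fcal_{\min}/\Gcal$ has finite length and $\Fcal_{\min}$ is reflexive, we have $\Gcal^{**}=\Fcal_{\min}$ and $\Fcal_{\min}/\Gcal=\Ccal^{**}/\Ccal$. By Corollary~\ref{SingularityThatCannotBeDeformed}'s computation, $\Ecal xt^1(\Fcal_{\min},\Ocal_B)\cong \Ocal_B/(z_1,z_2,z_3^2)$, which has length $2$. Theorem~\ref{thm:bubbling-rigidity}, together with Lemma~\ref{Lemma-Defect only depends on reflexive hull}, then forces
$$
l(\Fcal_{\min}/\Gcal)=m^{\mathrm{alg}}=1 .
$$
Hence $\Fcal_{\min}/\Gcal\cong \Ocal_B/(z_1,z_2,z_3)$, and $\Gcal=\Kcal er(q)$ for a surjection $q:\Fcal_{\min}\to \Ocal_B/(z_1,z_2,z_3)$.

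\textbf{Step 2: identifying the quotient map.} Write $q(e_1)=a$, $q(e_2)=b$, $q(e_3)=c$, where $a,b,c$ are complex numbers. Such an assignment is well defined on $\Fcal_{\min}$ because the relation $z_1e_1+z_2e_2+z_3^2e_3$ maps to zero in the residue field. Lemma~\ref{lem:nonstandard-length-one-quotient} excludes $(a,b)\neq(0,0)$, so $a=b=0$, and surjectivity gives $c\neq 0$. Then $\Kcal er(q)$ is the kernel of $q_{\min}$ rescaled by $c$, so $\Kcal er(q)=\Kcal er(q_{\min})=\Ccal_{\min}$ as subsheaves of $\Fcal_{\min}$. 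Note that this identification is literal: it is an equality of subsheaves of $\Fcal_{\min}$, not merely an isomorphism up to coordinates.

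\textbf{Expected obstacle.} The only delicate point is Step 1, namely checking that the Ext-length is computed from the given presentation and that the quotient length is exactly $1$. Nonzero length is needed to have a modification at all, and rigidity already guarantees it. Everything else is immediate from the preceding lemma. Existence, that $\Ccal_{\min}$ itself is smoothable, is not needed here, since it is supplied by Example~\ref{SimplestFertileFamily}.
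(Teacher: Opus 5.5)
Your proposal is correct and follows the paper's proof essentially step for step. You use $l(\Ecal xt^1(\Fcal_{\min},\Ocal_B))=2$ together with Theorem~\ref{thm:bubbling-rigidity} to force a length-one quotient, and then Lemma~\ref{lem:nonstandard-length-one-quotient} to force $a=b=0$, so that $\Gcal=\Kcal er(q_{\min})=\Ccal_{\min}$.
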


\begin{proof}
Let $0\to \Gcal\to \Fcal_{\min}\to \tau\to 0$ be such an elementary
modification. Since $\tau$ has finite length on the smooth threefold
$B$, one has $\Ecal xt^i(\tau,\Ocal_B)=0$ for $i<3$. Hence
$$
        \Ecal xt^1(\Gcal,\Ocal_B)
        \cong
        \Ecal xt^1(\Fcal_{\min},\Ocal_B)
        \cong
        \Ocal_B/(z_1,z_2,z_3^2).
$$
Thus $l(\Ecal xt^1(\Gcal,\Ocal_B))=2$. If $\Gcal$ is smoothable, then
$\Gcal^{**}\cong \Fcal_{\min}$, and Theorem~\ref{thm:bubbling-rigidity}
forces
$$
        l(\Fcal_{\min}/\Gcal)=1 .
$$

Therefore $\Gcal$ is the kernel of a length-one quotient
$q:\Fcal_{\min}\to \Ocal_B/(z_1,z_2,z_3)$. Write
$$
        q(e_1)=a,\qquad q(e_2)=b,\qquad q(e_3)=c .
$$
If $(a,b)\neq (o,0)$, then
Lemma~\ref{lem:nonstandard-length-one-quotient} shows that
$\Kcal er(q)$ is not smoothable. Hence a smoothable quotient must satisfy
$a=b=0$, so it is proportional to $q_{\min}$. Therefore
$$
        \Gcal=\Kcal er(q_{\min})=\Ccal_{\min}.
$$
\end{proof}

\begin{cor}[Uniqueness in the minimal bubbling case]
\label{cor:unique-minimal-central-fiber}
Let $\Ecal$ be a family whose central fiber $\Ccal$ has an isolated point
singularity at the origin and satisfies $m^{\mathrm{alg}}=1$. Then, after a local analytic
change of coordinates at the origin, the pair
$$
        \Ccal\subset \Ccal^{**}
$$
is isomorphic to
$$
        \Ccal_{\min}\subset \Fcal_{\min}.
$$
\end{cor}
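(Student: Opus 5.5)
The plan is to combine the two preceding lemmas. First I apply Lemma~\ref{lem:minimal-reflexive-hull}. Since $\Ecal$ degenerates to an isolated point singularity with $m^{\mathrm{alg}}=1$, it gives a local analytic change of coordinates at the origin and an isomorphism $\phi:\Ccal^{**}\xrightarrow{\sim}\Fcal_{\min}$. Here I must check that the coordinate change in that lemma (a linear change of $z_1,z_2,z_3$ followed by a change of basis of $\Ocal_B^{\oplus 3}$) really identifies the presentation of $\Ccal^{**}$ with the standard one defining $\Fcal_{\min}$. This holds once the ideal $(f_1,f_2,f_3)$ has been normalized to $(z_1,z_2,z_3^2)$: two presentations $\Ocal_B\to\Ocal_B^{\oplus 3}$ with the same ideal of entries, in a local ring, differ by an automorphism of $\Ocal_B^{\oplus 3}$. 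I would spell out that step briefly.

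Next I transport $\Ccal$ along $\phi$. The composite $\Gcal:=\phi(\Ccal)\subset\Fcal_{\min}$ is an elementary modification at the origin whose quotient $\Fcal_{\min}/\Gcal\cong\Ccal^{**}/\Ccal$ has length one. The coordinate change is an automorphism of $B$ near the origin fixing $o$. Pulling the family $\Ecal$ back along the induced automorphism of $X=B\times\Delta$ (acting trivially on $\Delta$) gives a family, still degenerating to an isolated point singularity at the origin, whose central fiber is $\Gcal$. Hence $\Gcal$ is a smoothable elementary modification of $\Fcal_{\min}$ in the sense of the Definition.

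Finally I apply Lemma~\ref{lem:unique-smoothable-modification}, which gives $\Gcal=\Ccal_{\min}$ as subsheaves of $\Fcal_{\min}$. The map $\phi$ therefore restricts to an isomorphism of pairs $(\Ccal\subset\Ccal^{**})\cong(\Ccal_{\min}\subset\Fcal_{\min})$.

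The main obstacle is bookkeeping rather than new mathematics. The uniqueness lemma concerns subsheaves of a fixed $\Fcal_{\min}$ with its fixed generators $e_1,e_2,e_3$, so I need the identification $\phi$ to be compatible with that presentation. Any automorphism of $\Fcal_{\min}$ would change which quotients count as "standard". This is harmless here because Lemma~\ref{lem:unique-smoothable-modification} applies to every smoothable length-one modification of $\Fcal_{\min}$, whichever $\phi$ is chosen. So the argument only needs some isomorphism $\phi$, and pulling back the smoothing along the coordinate change to keep smoothability is routine.
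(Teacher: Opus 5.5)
Your proposal is correct and matches the paper's argument: Lemma~\ref{lem:minimal-reflexive-hull} identifies $\Ccal^{**}$ with $\Fcal_{\min}$ after a coordinate change, and Lemma~\ref{lem:unique-smoothable-modification} then forces the length-one modification to be $\Ccal_{\min}$. Your extra bookkeeping only makes explicit what the paper leaves implicit, namely pulling $\Ecal$ back along the coordinate change to preserve smoothability and noting that the uniqueness lemma does not depend on the chosen identification $\phi$; moreover, automorphisms of $\Fcal_{\min}$ act on $\Fcal_{\min}\otimes\Ocal_B/(z_1,z_2,z_3)$ by scalars, so your worry about them moving the standard quotient does not arise.
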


\begin{proof}
By Lemma~\ref{lem:minimal-reflexive-hull}, after a local analytic change
of coordinates, 
$$
\Ccal^{**}\cong \Fcal_{\min}.
$$ 
Since
$m^{\mathrm{alg}}=1$, the inclusion $\Ccal\subset \Ccal^{**}$ is a
length-one elementary modification. Lemma~\ref{lem:unique-smoothable-modification}
identifies the only smoothable one.
\end{proof}

\begin{rmk}\label{Rmk:Nonreducedness}
The corollary classifies the central fiber as an elementary modification
of its reflexive hull. It does not classify the smoothing family over
$\Delta$. Indeed, if $\Ecal$ is such a family, then for any $k\geq 2$
the ramified base change $p_k^*\Ecal$, where
$$
p_k(z_1,z_2,z_3,t)=(z_1,z_2,z_3,t^k),
$$
is again a family of the same type. In general, these base changes need
not be isomorphic as families over $\Delta$.

For the minimal family considered above, the resulting family
$\Ecal_k=p_k^*\Ecal$ satisfies
$$
\Ecal xt^1(\Ecal_k,\Ocal)
\cong
\Ocal/(z_1,z_2,z_3,t^k).
$$
Thus this Ext sheaf is supported on the $k$-th infinitesimal thickening
of the central point and is not annihilated by $t$. In particular,
$\Ecal_k$ is not reduced in the sense of Perrin \cite{Perrin:2001}.
\end{rmk}

\section{Global appearance of the minimal singularity and bubbling}\label{Section-Global appearance of the minimal singularity and bubbling}
In this section we globalize the minimal model from
Section~\ref{subsec:minimal-singularity}.  We write
$\Ocal_{\PBbb^3}(k)$ simply as $\Ocal(k)$, and use the same notation for
its pullback to $\PBbb^3\times \Delta$ when the parameter $t$ is present.
The local construction is the same as before; the new point is that the resulting singularities arise as limits of stable rank-two bundles in a
global family.  We then examine the associated
Hermitian--Yang--Mills degeneration.

\subsection{Main results}
\label{subsec:minimal-global-conclusions}
The main output of this section is the following global realization of the
minimal local model. 
\begin{thm}[Minimal global degeneration and analytic bubbles]
\label{thm:minimal-global-degeneration}
There exists a flat family
$\Ecal$ of rank-two sheaves on $\PBbb^3\times\Delta$ with central fiber
$\Ccal$ such that the following hold.

\begin{enumerate}
\item For every $t\neq 0$, the fiber $\Ecal_t$ is stable and locally free.
Moreover,
$$
\Ccal^{**}/\Ccal
\cong
\Ocal_{\PBbb^3}/\Ical,
$$
where $\Ical$ is the ideal sheaf associated to two distinct points $p_1,p_2$. Furthermore, near each $p_j$, up to local coordinate change, the pair
$$
\Ccal\subset \Ccal^{**}
$$
is locally isomorphic to the minimal local pair
$$
\Ccal_{\min}\subset \Fcal_{\min}.
$$

\item Let $A_i$ be the HYM connection on $\Ecal_{t_i}$ for a sequence
$t_i\to 0$. After passing to a subsequence, the Uhlenbeck limit
$$
(A_\infty,\Ecal_\infty,Z_b)
$$
satisfies
$$
\Ecal_\infty\cong \Ccal^{**},
\qquad
Z_b=0,
\qquad
\operatorname{Sing}(A_\infty)=\{p_1,p_2\}.
$$
In particular,
$$
|F_{A_i}|^2\,\dvol
\rightharpoonup
|F_{A_\infty}|^2\,\dvol
$$
as Radon measures on $\PBbb^3$.

\item For each $j=1,2$, the analytic tangent-cone data of $A_\infty$ at
$p_j$ are
$$
\mathcal T_{p_j}(A_\infty)=(A_{T,j},[L_j]),
$$
where $A_{T,j}$ is flat and
$L_j\subset \CBbb^3$ is a complex line. Furthermore,
$$
\Theta_{p_j}(\{A_i\}_i)
=
\Theta_{p_j}(A_\infty)
=
1 .
$$

\item Fix one of the two points $p_j$. After passing to a further subsequence,
there are centered scales $r_i\downarrow 0$ such that, in holomorphic
coordinates centered at $p_j$, the rescaled connections
$$
\widetilde A_i:=\delta_i^*A_i,
\qquad
\delta_i(z)=r_i z,
$$
have a Uhlenbeck limit
$$
(B_\infty,\mathcal B_\infty,Z_b')
$$
on $\CBbb^3$ satisfying
$$
Z_b'=0.
$$
The limiting sheaf $\mathcal B_\infty$ is locally free, and $B_\infty$ is
non-flat. Moreover,
$$
\Theta_\infty(B_\infty)
:=
\lim_{R\to\infty}
\frac{1}{8\pi^3R^2}
\int_{B_R(0)} |F_{B_\infty}|^2\,\dvol
=
1,
$$
and the analytic tangent-cone data of $B_\infty$ at infinity are
$$
\mathcal T_\infty(B_\infty)=(B_T,[L]),
$$
where $B_T$ is flat and $L\subset\CBbb^3$ is a complex line.
\end{enumerate}
\end{thm}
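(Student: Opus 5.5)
The plan is to build the global family by a Serre-type construction on $\PBbb^3$ that globalizes the local resolution of Example~\ref{SimplestFertileFamily}, and then to get the analytic statements from Donaldson--Uhlenbeck--Yau together with the algebraic description of Uhlenbeck limits and tangent cones in \cite{ChenSun:18, ChenSun:19, ChenSun:20a, ChenSun:20b}.

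\textbf{Algebraic part (1).} I will start from a global rank-two reflexive sheaf $\Fcal$ with $c_1=0$ (or $-1$), stable, whose only singularities are at two points $p_1,p_2$. Each singularity should be locally of type $\Fcal_{\min}$, i.e.\ $l(\Ecal xt^1(\Fcal,\Ocal))=2$ at each point. Such an $\Fcal$ can be produced by Hartshorne's correspondence from a section of $\Fcal(k)$ vanishing on a curve $Y$ whose $\omega_Y$-extension class fails to generate at exactly two points with colength two there. A concrete alternative is a global cokernel
$$
0\to\Ocal(-a)\xrightarrow{(s_1,s_2,s_3)^T}\Ocal(-b_1)\oplus\Ocal(-b_2)\oplus\Ocal(-b_3)\to\Fcal\to 0,
$$
with $(s_1,s_2,s_3)$ vanishing scheme-theoretically on a length-four scheme consisting of two curvilinear length-two points $\{z_1=z_2=z_3^2=0\}$. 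I would take $s_3=g^2$ with $(s_1,s_2,g)$ cutting out $p_1,p_2$ transversally; this forces the degrees to satisfy $\deg s_1\cdot\deg s_2\cdot\deg g=2$. Next I replace the Koszul piece globally as in Proposition~\ref{SmoohtlizeAClassOfSingularities}, using the twisted Koszul complex of $(s_1,s_2,g,t)$ on $\PBbb^3\times\Delta$ and the matrix $M$ with homogenized entries. By the same rank computation, the cokernel $\Ecal$ is locally free off $\{p_1,p_2\}\times\{0\}$, and by Lemma~\ref{ReflexiveProperty} it is flat. The central fiber is the kernel of $\Fcal\to\Ocal_{p_1}\oplus\Ocal_{p_2}$ through the third factor. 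Locally this is $\Ccal_{\min}\subset\Fcal_{\min}$, which one checks directly, or via Theorem~\ref{thm:minimal-bubbling-model}, since $m^{\mathrm{alg}}=1$ at each point. Stability of $\Ecal_t$ for small $t\neq0$ follows from openness of stability, since $\Ccal$ is stable whenever $\Fcal$ is: slopes of subsheaves are unchanged by finite-length modification.

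\textbf{Analytic parts (2)--(3).} By DUY each $\Ecal_{t_i}$ carries an HYM connection $A_i$. Uhlenbeck compactness gives a limit $(A_\infty,\Ecal_\infty,Z_b)$. By the algebraic characterization of the limit (Chen--Sun/Greb--Sibley--Toma--Wentworth), $\Ecal_\infty$ is the reflexive hull of the graded object of the limiting sheaf, namely $\Ccal^{**}$, since $\Ccal$ is stable. The codimension-two cycle $Z_b$ is computed from $\Ccal^{**}/\Ccal$ in codimension two, which vanishes because that quotient is zero-dimensional. Hence $Z_b=0$. There is no point mass either, because $\mathrm{ch}_2$ is preserved and point contributions to energy in real dimension six have measure zero scale, so weak convergence of $|F|^2$ holds. $\operatorname{Sing}(A_\infty)=\{p_1,p_2\}$ because $\Fcal$ is not locally free there. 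The tangent cone at $p_j$ is computed by the Chen--Sun algorithm from the stalk of $\Fcal_{\min}$. Its graded sheaf on $\PBbb^2$ is $\Ocal\oplus\Ocal$ after the semistable filtration, which gives a flat $A_{T,j}$. The blow-up cycle is the multiplicity-one line $L_j=\{z_1=z_2=0\}$, read off from the length-two scheme along the $z_3$-direction. Density one then follows from the density formula of Chen--Sun, and it equals the limit density because $Z_b=0$.

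\textbf{Rescaling (4) — the main obstacle.} I will choose $r_i$ as the scale at which the density of $A_i$ at $p_j$ in $B_{r}$ first drops to $1/2$. A standard centered bubbling argument then gives a nonflat limit $B_\infty$ with $Z_b'=0$; any bubbling cycle would contribute density at least one in a smaller region, contradicting monotonicity. The limit has total density at most $1$, with $1$ attained, by monotonicity and matching with the tangent cone. The tangent cone at infinity is obtained from a diagonal argument comparing with $\mathcal T_{p_j}(A_\infty)$, which yields $(B_T,[L])$. Local freeness of $\mathcal B_\infty$ needs regularity at every point of $\CBbb^3$. This is the subtle step: one must exclude a secondary singularity of $B_\infty$ carrying density. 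I would use the $\epsilon$-regularity theorem together with the quantization of density of singular points, which is at least $1$ for nonflat tangent cones, and the total bound $\Theta_\infty=1$ combined with the scale choice forcing density $\le1/2$ in unit balls.
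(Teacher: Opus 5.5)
The genuine gap is in (4). Fixing the centered scale by requiring the energy ratio about $p_j$ to equal $1/2$ at radius one forces neither $Z_b'=0$ nor non-flatness of $B_\infty$. A priori the rescaled limit could be a flat connection together with $Z_b'=[L']$, where $L'$ is a complex line at distance $1/\sqrt2$ from the origin. The normalized ratio of this limit about the origin is $\max\{0,1-\tfrac{1}{2R^2}\}$. It equals $1/2$ at $R=1$, is nondecreasing, and stays below the bound $1$ inherited from $\Theta_{p_j}(\{A_i\}_i)=1$. So nothing contradicts monotonicity, and the energy seen at unit scale may sit entirely in the cycle.

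Since your scale is fixed by the ratio about a single center, the bound ``density $\le 1/2$ in unit balls'' holds only for balls centered at the origin. It therefore cannot exclude a singular point of $\mathcal B_\infty$ elsewhere. Moreover, the quantization you quote is too optimistic: in the paper's normalization, the cokernel of $(z_1,z_2,z_3)^T:\Ocal\to\Ocal^{\oplus 3}$ (link $T_{\PBbb^2}(-1)$) carries a non-flat HYM cone of density $\Delta=3/4$.

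The paper orders the argument the other way. It takes the scale at the resolving scale, where a definite amount of curvature survives in the limiting connection, so $\Theta_\infty(B_\infty)>0$ is known first. Only then is $Z_b'$ excluded, since any cycle adds at least one unit to a total of at most one. If you normalize curvature instead of energy, so that the rescaled curvature is uniformly bounded on compacta, then $Z_b'=0$ and local freeness are automatic. What must then be shown is that the curvature does not escape to infinity in the rescaled coordinates centered at $p_j$.

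Likewise, the paper obtains $\Theta_\infty(B_\infty)=1$ and the form $(B_T,[L])$ from density accounting together with the classification of low-charge polystable rank-two sheaves on $\PBbb^2$. It does not use a diagonal comparison with $\mathcal T_{p_j}(A_\infty)$. That comparison is not valid in general, because energy can sit at intermediate scales between the bubble scale and the scales seen by the tangent cone at $p_j$.

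Otherwise (1)--(3) follow the paper: a global Koszul-type deformation of a presentation with squared third entry, semicontinuity/GSTW to get $\Ecal_\infty\cong\Ccal^{**}$ and $Z_b=0$, and the Chen--Sun computation at $p_j$. In that computation the graded object is $\Ocal\oplus\Ical_q$ after normalization; $\Ocal\oplus\Ocal$ is only its double dual, and the length-one quotient at $q$ is what produces $[L_j]$. Replacing the paper's vanishing $H^0(\PBbb^3,\Ecal_t(-1))=0$ by openness of $\mu$-stability, after shrinking $\Delta$, is a legitimate alternative.

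However, two choices you leave generic must be made correctly.
\begin{enumerate}
\item The degrees matter for stability of $\Fcal$. With $\deg g=2$ and $\deg s_1=\deg s_2=1$, normalizing $c_1=0$ gives $\Ocal(-3)\to\Ocal(-2)^{\oplus 2}\oplus\Ocal(1)$, and $\Ocal(1)\subset\Fcal$ destabilizes. You need $\deg g=1$ and $\{\deg s_1,\deg s_2\}=\{1,2\}$, as in the paper's $(Z_1,Z_2Z_4,Z_3)$.
\item The parameter $t$ must enter as $tP$, with $P$ of the degree forced by homogeneity and $P(p_1)P(p_2)\neq 0$. The minor used in Proposition~\ref{SmoohtlizeAClassOfSingularities} becomes $(tP)^3$. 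If $P(p_j)=0$, every entry of $M$ vanishes at $p_j$, so $\Ecal_t$ stays singular there for all $t$.
\end{enumerate}
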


We call the connection $B_\infty$ in item $(4)$ a
\emph{minimal-density analytic bubble} associated with the global
degeneration. With our normalization,
$$
\Theta_\infty(B_\infty)=1,
$$
and a multiplicity-one complex line contributes exactly one unit of density.
Thus $B_\infty$ is a smooth non-flat HYM connection on $\CBbb^3$ with
minimal quadratic energy growth compatible with a nonzero tangent-cone
blow-up cycle.

The point is that the non-flatness of $B_\infty$ is not visible in the
connection part of its tangent cone at infinity.  Indeed,
$$
\mathcal T_\infty(B_\infty)=(B_T,[L]),
\qquad
B_T\ \text{flat},
$$
where $L\subset \CBbb^3$ is a complex line. The limiting cone connection is
flat, and the remaining curvature concentration is recorded only by the
multiplicity-one line $[L]$. In this sense, the global degeneration gives a
theoretic construction of a minimal nontrivial smooth HYM bubble on
$\CBbb^3$ whose tangent cone at infinity has flat connection part.

In Subsection~\ref{subsec:compactification-minimal-bubble}, we discuss a
related compactification question. Assuming the expected
compatibility between algebraic compactifications and tangent-cone data at
infinity, we show that the boundary sheaf at infinity is forced into one of
two explicit types: the unique non-split extension
$$
0\longrightarrow \Ocal_{H_\infty}
\longrightarrow \Gcal_\infty
\longrightarrow \Ical_q
\longrightarrow 0,
$$
or the split sheaf
$$
\Ocal_{H_\infty}\oplus \Ical_q .
$$
The split type is excluded for algebraic bubbles coming from families
forming the minimal singularity. Assuming the folklore expectation that the analytic bubble has split type at infinity, this gives a precise sense in which the centered analytic bubble differs from the algebraic bubble obtained by blowing up the degenerating family.

\subsection{Uhlenbeck compactness}\label{Section-Uhlenbeck compactness}
In this section, we recall known results for
Hermitian--Yang--Mills connections that will be used below and refer readers to
\cite{UhlenbeckPreprint,Nakajima:88,Tian:00,BandoSiu:94,ChenWentworth:21a, ChenSun:20a,ChenSun:18, ChenSun:20b, ChenSun:19} for details. We tailor the discussion to the case of rank-two bundles on threefolds.

Let $X$ be a K\"ahler threefold, and let $A_i$ be a sequence of HYM
connections on a fixed smooth Hermitian vector bundle $(E,H)\to X$, with
uniformly bounded Yang--Mills energy. After passing to a subsequence, there exist a subvariety
$\Sigma\subset X$ of complex codimension at least two, a smooth Hermitian
vector bundle $(E_\infty,H_\infty)$ over $X\setminus\Sigma$, and an
admissible HYM connection $A_\infty$ on $E_\infty$, with the following
property. For $i\gg 1$, there are Hermitian bundle isomorphisms
$$
P_i:(E,H)|_{X\setminus\Sigma}
\longrightarrow
(E_\infty,H_\infty)
$$
such that
$$
(P_i^{-1})^*A_i\longrightarrow A_\infty
$$
smoothly on compact subsets of $X\setminus\Sigma$ (\cite{Tian:00, UhlenbeckPreprint, Nakajima:88, BandoSiu:94, ChenWentworth:21a}). Here by (\cite{BandoSiu:94}), the holomorphic bundle
$(E_\infty,\bar\partial_{A_\infty})$ extends uniquely across codimension at
least two to a reflexive sheaf $\Ecal_\infty$ on $X$. Moreover, after gauge transforms,
$A_\infty$ extends smoothly across the locus where $\Ecal_\infty$ is
locally free. We write
$$
\operatorname{Sing}(A_\infty)
=
\operatorname{Sing}(\Ecal_\infty)
$$
for the essential singular set of the admissible limit. Since $X$ has
complex dimension three, this set is finite. Then
$$
\Sigma=\Sigma_b\cup \operatorname{Sing}(A_\infty),
$$
where $\Sigma_b$ is the closure of the pure complex codimension-two part of
$\Sigma\setminus\operatorname{Sing}(A_\infty)$. Thus, on a threefold,
$\Sigma_b$ is a complex curve, possibly empty. $\Sigma_b$ is usually referred as the
blow-up locus. If
$$
\Sigma_b=\bigcup_k\Sigma_k
$$
is the decomposition into irreducible components, after passing to a further subsequence, 
$$
\frac{1}{8\pi^2}|F_{A_i}|^2\,\dvol
\rightharpoonup
\frac{1}{8\pi^2}|F_{A_\infty}|^2\,\dvol
+
\sum_k m_k[\Sigma_k]
$$
as Radon measures on $X$ where $m_k$ is a positive integer called analytic multiplicity associated to the irreducible pure codimension two component $\Sigma_k$.  The blow-up cycle of the sequence is defined by
$$
Z_{b}:=\sum_k m_k\Sigma_k .
$$ 
Below, we call
$$
(A_\infty, \Ecal_\infty, Z_b)
$$
the Uhlenbeck  limit of the sequence $A_i$ and sometimes we might omit $\Ecal_\infty$ by simply writing it as $(A_\infty, Z_b)$.  For a point $x\in \operatorname{Sing}(\Ecal_\infty)$, we write
$$
\mathcal T_x(A_\infty)=(A_{T,x},Z_{T,x})
$$
for the analytic tangent-cone data of $A_\infty$ at $x$. More precisely, these data are obtained as Uhlenbeck limits of rescaled sequences centered at $x$ (\cite{Tian:00}). Here $A_{T,x}$ is
an admissible HYM cone connection, and $Z_{T,x}$ is the tangent-cone
blow-up cycle, viewed as a conical codimension-two cycle in
$\CBbb^3$. When needed, we denote by $\Ecal_{T,x}$ the
underlying reflexive cone sheaf of $A_{T,x}$. By \cite{ChenSun:20a,ChenSun:18,ChenSun:20b,ChenSun:19}, these data are intrinsic algebraic invariants of the stalk of $\Ecal_\infty$ at $x$. Moreover, they can be computed from the optimal extension of $\Ecal_\infty$ at $x$ (\cite{ChenSun:20b}). Since $\Ecal_\infty$ has rank two, this optimal extension is necessarily semistable.

\subsection{Global appearance of the minimal model}
Let
$$
        p_1=[0,0,0,1],\qquad p_2=[0,1,0,0],
$$
and let
$
        \Ical=(Z_1,Z_2Z_4,Z_3)\subset \Ocal_{\PBbb^3}.
$
be the ideal sheaf associated to $p_1,p_2.$ Consider the rank-two reflexive sheaf $\Gcal$ on $\PBbb^3$ defined by
$$
0\rightarrow \Ocal(-3)
\xrightarrow{
\begin{pmatrix}
Z_1\\
Z_2Z_4\\
Z_3^2
\end{pmatrix}}
\Ocal(-2)\oplus \Ocal(-1)^{\oplus 2}
\rightarrow \Gcal
\rightarrow 0 .
$$
The sheaf $\Gcal$ has precisely two isolated singularities, at $p_1$ and
$p_2$.  At both points, its local model is the minimal reflexive
singularity $\Fcal_{\min}$ from Section~\ref{subsec:minimal-singularity}.
Moreover, since $H^0(\PBbb^3,\Gcal)=0$ and $c_1(\Gcal)=-1$, the sheaf
$\Gcal$ is stable.

Set $\Fcal:=\Gcal(1).$ Let
$$
        q:\Fcal\rightarrow \Ocal_{\PBbb^3}/\Ical
$$
be the quotient induced by projection of
$
        \Ocal(-1)\oplus \Ocal^{\oplus 2}
$
onto the third factor, followed by the quotient map
$\Ocal_{\PBbb^3}\to \Ocal_{\PBbb^3}/\Ical$.  Define
$$
        \Ccal:=\Kcal er(q).
$$
Then
$$
        \Ccal^{**}\cong \Fcal,\qquad
        \Ccal^{**}/\Ccal\cong \Ocal_{\PBbb^3}/\Ical .
$$
Thus, near each of $p_1$ and $p_2$, the pair
$$
        \Ccal\subset \Ccal^{**}
$$
is isomorphic to the minimal pair
$$
        \Ccal_{\min}\subset \Fcal_{\min}.
$$
In particular, the algebraic bubbling multiplicity is one at each
singular point.

We now apply the same construction as in
Section~\ref{DeformingSingularities}.  Let $P$ be a homogeneous polynomial
of degree two, and introduce a parameter $t$. Set
$$
\begin{aligned}
\mathcal U
&:=\Ocal(-3)\oplus \Ocal(-2)\oplus \Ocal(-3)\oplus \Ocal(-2),\\
\mathcal V
&:=\Ocal(-2)\oplus \Ocal(-1)^{\oplus 2}\oplus \Ocal\oplus \Ocal(-1),
\end{aligned}
$$
and let
$$
        s=
        \begin{pmatrix}
        Z_1\\
        Z_2Z_4\\
        Z_3\\
        tP
        \end{pmatrix}.
$$
Define $\Ecal$ on $\PBbb^3\times\Delta$ by the complex
$$
        0\rightarrow \Ocal(-4)
        \xrightarrow{s}
        \mathcal U
        \xrightarrow{M}
        \mathcal V
        \rightarrow \Ecal
        \rightarrow 0,
$$
where, with respect to the above decompositions,
$$
M=
\begin{pmatrix}
Z_3&0&-Z_1&0\\
0&-Z_3&Z_2Z_4&0\\
-Z_2Z_4&Z_1&-tP&Z_3\\
0&-tP&0&Z_2Z_4\\
-tP&0&0&Z_1
\end{pmatrix}.
$$
For $t=0$, this complex specializes to the resolution of
$\Ccal$.

\begin{prop}\label{prop:global-minimal-family}
Let $P$ be a homogeneous polynomial of degree two such that
$P(p_1)\neq 0$ and $P(p_2)\neq 0$.  Then $\Ecal_t$ is a stable
rank-two bundle on $\PBbb^3$ for every $t\neq 0$, and the central fiber
of $\Ecal$ is $\Ccal=\Kcal er(q)$ being also stable.  Its only singularities are $p_1$ and $p_2$, and
at each point the local model is the minimal pair
$$
        \Ccal_{\min}\subset \Fcal_{\min}.
$$
\end{prop}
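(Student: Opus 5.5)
The plan is to prove the proposition in four steps, in order.

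First I will check that the displayed sequence is a complex and that it is exact. The check $M s=0$ is a direct multiplication, with homogeneity of degrees making every map well defined. Exactness follows from the Buchsbaum--Eisenbud acceptability criterion, applied exactly as in Proposition~\ref{SmoohtlizeAClassOfSingularities}. The map $s$ vanishes only where $Z_1=Z_2Z_4=Z_3=tP=0$. On $\PBbb^3\times\Delta$ this locus is $\{p_1,p_2\}\times\{0\}$ together with the points where $t=0$ or $P=0$ meets $\{p_1,p_2\}$. Because $P(p_j)\neq 0$, this locus is exactly $\{(p_1,0),(p_2,0)\}$. So $s$ is injective as a bundle map off a set of codimension four, and the ideal of maximal minors of $s$ has depth $4\geq 1$. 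For $M$, its rank is at most $3$. Its $3\times 3$ minors have common zeros only on the degeneracy locus, since the rank analysis below shows that this locus sits in codimension four, which is $\geq 2$. Hence $\Ecal$ is the cokernel of a two-step resolution of length two. Its restriction to $t=0$ is the stated resolution of $\Ccal$, and flatness follows as in Lemma~\ref{RestrictionOfResolution}.

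Second, I will show that $M$ has rank $3$ away from $(p_j,0)$, which makes $\Ecal$, and hence each $\Ecal_t$ with $t\neq0$, locally free of rank two. I split into cases.
\begin{itemize}
\item Where $tP\neq0$, the $3\times3$ minor formed by rows $3,4,5$ and columns $1,2,3$ has determinant $\pm(tP)^3\neq 0$. This mirrors the $t^3$ minor in the local construction.
\item Where $t\neq 0$ but $P=0$, the point lies off $\{p_1,p_2\}$. There $M$ restricts to the $t=0$ matrix, and this matrix resolves $\Ccal$, which is locally free off $p_1,p_2$. So the rank is $3$ there as well.
\end{itemize}
Near each $p_j$ I will pass to an affine chart: $Z_4=1$ at $p_1$ and $Z_2=1$ at $p_2$. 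There $(Z_1,Z_2Z_4,Z_3)$ becomes a regular system of parameters and $tP$ becomes $t$ times a unit. After rescaling the parameter locally, the complex is literally the family $\Ecal_{\min}$ of Example~\ref{SimplestFertileFamily}. This identifies the local pair $\Ccal\subset\Ccal^{**}$ with $\Ccal_{\min}\subset\Fcal_{\min}$ and shows that the only singularities are $p_1,p_2$.

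Third, I will prove stability. I expect this step to be the main obstacle. We have $c_1(\Ecal_t)=c_1(\Fcal)=1$, which is odd, so for a rank-two sheaf stability is equivalent to $H^0(\Ecal_t(-1))=0$. For the central fiber, $\Ccal(-1)\subset\Fcal(-1)=\Gcal$ and $H^0(\Gcal)=0$, so $\Ccal$ is stable. For $t\neq0$ I will use upper semicontinuity of $h^0$ in the flat family $\Ecal(-1)$ over $\Delta$. This gives $h^0(\Ecal_t(-1))\le h^0(\Ccal(-1))=0$ for $t$ near $0$, after shrinking $\Delta$.

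Stability then holds for all $t\neq0$ in the (shrunken) disk. If the statement is meant for every $t\in\Delta$ of a fixed disk, I would instead compute $H^0(\Ecal_t(-1))$ directly from the twisted resolution. Twisting by $-1$ gives $\mathcal V(-1)=\Ocal(-3)\oplus\Ocal(-2)^2\oplus\Ocal(-1)\oplus\Ocal(-2)$, which has no sections. So $H^0(\Ecal_t(-1))$ injects into $H^1$ of the kernel $K_t=\Img M_t$. A chase using $0\to\Ocal(-5)\to\mathcal U(-1)\to K_t\to0$ and $H^1(\mathcal U(-1))=H^2(\Ocal(-5))=0$ gives $H^1(K_t(\,))=0$. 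Hence $H^0(\Ecal_t(-1))=0$ for every $t$, uniformly.
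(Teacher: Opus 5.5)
Your proposal is correct and follows the paper's proof. The $(tP)^3$ minor, together with the $t=0$ matrix off $\{p_1,p_2\}$, gives local freeness of $\Ecal_t$ for $t\neq 0$, and $P(p_j)\neq 0$ reduces the germs at $(p_j,0)$ to the minimal model. Stability then follows from $H^0(\Ecal_t(-1))\hookrightarrow H^1(\Kcal_t(-1))=0$ via the twisted resolution, which (unlike your semicontinuity detour, valid only for small $|t|$) works for every $t\neq 0$. One point needs care: $H^0(\,\cdot\,(-1))=0$ characterizes stability only for reflexive rank-two sheaves, so for the non-reflexive $\Ccal$ you should argue via the stability of $\Gcal=\Fcal(-1)$ and the fact that slope stability is unchanged by modifications supported at points.
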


\begin{proof}
The description of the central fiber follows by setting $t=0$ in the
displayed complex.  The local analysis is the same as in
Proposition~\ref{SmoohtlizeAClassOfSingularities}.  Away from $p_1$ and
$p_2$, the complex has the expected rank.  Near each $p_i$, the condition
$P(p_i)\neq 0$ identifies the family with the local smoothing of the
minimal model.  Hence $\Ecal_t$ is locally free for $t\neq 0$, while the
central fiber has precisely the two minimal point singularities described
above.

It remains to check stability.  Since $c_1(\Ecal_t)=1$, it is enough to
show $H^0(\PBbb^3,\Ecal_t(-1))=0$.  Let
$$
        \Kcal_t:=\Img(M_t).
$$
From the displayed resolution and the vanishing of intermediate cohomology
of line bundles on $\PBbb^3$, one obtains
$$
        H^1(\PBbb^3,\Kcal_t(-1))=0.
$$
The exact sequence
$$
        0\to \Kcal_t(-1)
        \to
        \Ocal(-3)\oplus \Ocal(-2)^{\oplus 2}\oplus \Ocal(-1)\oplus \Ocal(-2)
        \to \Ecal_t(-1)\to 0
$$
therefore gives $H^0(\PBbb^3,\Ecal_t(-1))=0$.  Thus $\Ecal_t$ is stable.
\end{proof}

\subsection{Uhlenbeck limits and tangent cones for the global degeneration}
By the Donaldson--Uhlenbeck--Yau theorem
(\cite{Donaldson:87a,UhlenbeckYau:86}), there exists a HYM metric
$H_t$ on $\Ecal_t$ for every $t\neq 0$. Let $A_t$ denote the associated
Chern connection. Since the fibers $\Ecal_t$ form a smooth family of
locally free sheaves over the punctured disc, their underlying smooth
complex vector bundles are all isomorphic.

Choose Hermitian identifications and regard the $A_t$ as unitary
connections on a fixed smooth Hermitian vector bundle $(E,H)$. Fix a
sequence $t_i\to 0$ and set $A_i:=A_{t_i}$. Since the Chern classes are
constant in the family, the Yang--Mills energies of the HYM connections
$A_i$ are uniformly bounded.
By Uhlenbeck compactness (\ref{Section-Uhlenbeck compactness}), after passing to a
subsequence we obtain an Uhlenbeck limit
$$
(A_\infty,\Ecal_\infty,Z_{b}).
$$
Here $A_\infty$ is an admissible HYM connection on the limiting reflexive
sheaf $\Ecal_\infty$, and $Z_{b}$ is the codimension-two defect cycle. 
For a point $x\in \operatorname{Sing}(\Ecal_\infty)$, we write
$$
\mathcal T_x(A_\infty)=(A_{T,x},Z_{T,x})
$$
for the analytic tangent-cone data of $A_\infty$ at $x$.
 
\begin{prop}\label{StableReflexiveSheafImpliesHYMConvergence}
The Uhlenbeck limit
$$
(A_\infty,\Ecal_\infty,Z_b)
$$
satisfies the following.
\begin{enumerate}
\item The limiting reflexive sheaf is
$$
\Ecal_\infty\cong \Ccal^{**},
$$
and there is no pure codimension two bubbling:
$$
Z_b=0, \quad \operatorname{Sing}(\Ecal_\infty)=\{p_1,p_2\}.
$$
Consequently, as Radon measures on $\PBbb^3$,
$$
|F_{A_i}|^2\,\dvol
\rightharpoonup
|F_{A_\infty}|^2\,\dvol .
$$

\item For each $j=1,2$, the analytic tangent-cone data
$$
\mathcal T_{p_j}(A_\infty)=(A_{T,j},Z_{T,j})
$$
satisfy
$$
Z_{T,j}=[L_j],
$$
where $A_{T,j}$ is flat and
$L_j\subset T_{p_j}\PBbb^3\cong \CBbb^3$ is a complex line passing through the origin.

\item For each $j=1,2$, the energy densities are equal 
$$
\Theta_{p_j}(\{A_i\}_i)=\Theta_{p_j}(A_\infty)=1 .
$$
\end{enumerate}
\end{prop}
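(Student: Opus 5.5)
The plan is to establish (1) first, by identifying the limiting sheaf and then excluding any energy loss. (2) and (3) will then follow from the Chen--Sun algebraic description of tangent cones applied to the stalk $\Fcal_{\min}$.

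\textbf{Step 1: $\Ecal_\infty\cong\Ccal^{**}$.} By Proposition~\ref{prop:global-minimal-family}, $\Ccal$ is stable, hence so is $\Fcal=\Ccal^{**}$. By Bando--Siu, $\Fcal$ carries an admissible HYM metric. The limit $\Ecal_\infty$ is a reflexive sheaf with the same rank and $c_1$, and it is polystable because $A_\infty$ is admissible HYM.
\begin{itemize}
\item I would produce a nonzero holomorphic map $\Ccal\to\Ecal_\infty$ by the standard argument. On compact subsets of $\PBbb^3\setminus\Sigma$, the identifications $P_i$ compose the holomorphic structures $\Ecal_{t_i}$ with the limit. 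Normalizing the induced sections of $\Hcal om(\Ecal_{t_i},\Ecal_\infty)$ and using upper semicontinuity of $\dim\operatorname{Hom}$ in the flat family $\Ecal$ gives a nonzero limit map. It extends across $\Sigma$ by Hartogs, since the target is reflexive.
\item This map factors through $\Ccal^{**}=\Fcal$. Both $\Fcal$ and $\Ecal_\infty$ are semistable of the same slope, and $\Fcal$ is stable, so the map is injective and saturated with torsion-free cokernel of rank $0$. Hence it is an isomorphism in codimension one.
\item Since both sheaves are reflexive, the map is an isomorphism, and $\operatorname{Sing}(\Ecal_\infty)=\operatorname{Sing}(\Fcal)=\{p_1,p_2\}$.
\end{itemize}

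\textbf{Step 2: no energy loss.} I would use Chern--Weil on both sides. The Yang--Mills energy of $A_i$ is a fixed linear combination of $c_1^2\cdot[\omega]$ and $c_2(\Ecal_t)\cdot[\omega]$. The energy of the admissible connection $A_\infty$ is given by the same expression with $c_2(\Fcal)\cdot[\omega]$.
\begin{itemize}
\item Flatness gives $c_2(\Ecal_t)=c_2(\Ccal)$.
\item Since $\Fcal/\Ccal$ is zero-dimensional, $\operatorname{ch}_2(\Ccal)=\operatorname{ch}_2(\Fcal)$.
\item Hence the total masses of $|F_{A_i}|^2\dvol$ and $|F_{A_\infty}|^2\dvol$ agree. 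The defect measure is therefore zero: it is nonnegative, it contains $\sum m_k[\Sigma_k]$ with $m_k>0$, and any point atoms contribute positively.
\end{itemize}
This gives $Z_b=0$ together with the measure convergence. The same equality of total mass excludes atoms at $p_j$, which gives the first equality $\Theta_{p_j}(\{A_i\})=\Theta_{p_j}(A_\infty)$ in (3).

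\textbf{Step 3: tangent cones (main obstacle).} By \cite{ChenSun:20a,ChenSun:20b}, $\mathcal T_{p_j}(A_\infty)$ is determined by the stalk, which is $\Fcal_{\min}$ by Proposition~\ref{prop:global-minimal-family}. I would compute it through the optimal extension in the Chen--Sun algorithm.
\begin{itemize}
\item Blow up the origin and restrict the weighted extension of $\Fcal_{\min}$ to the exceptional $\PBbb^2$.
\item Using the relation $z_1e_1+z_2e_2+z_3^2e_3=0$, the leading-order data should be the extension $0\to\Ocal_{\PBbb^2}\to\cdot\to\Ical_x\to0$, where $x=[0:0:1]$ corresponds to the line $L=\{z_1=z_2=0\}$. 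The lowest-weight part is generated by $e_3$, and the term $z_3^2e_3$ is of higher order, so only $(z_1,z_2)$ survives at the first weight.
\item The graded semistable object is then $\Ocal\oplus\Ical_x$. Its reflexive hull is trivial, which gives a flat cone connection $A_{T,j}$, and its length-one defect at $x$ gives the cycle $[L_j]$ with multiplicity one.
\end{itemize}
The delicate part is checking that this extension is indeed the optimal (semistable) one, and that no further weight modification is needed.

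Finally, the density of a HYM cone with flat connection part equals the sum of the multiplicities in its blow-up cycle. Hence $\Theta_{p_j}(A_\infty)=1$, completing (3).
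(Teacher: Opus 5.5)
Your proposal is correct and follows essentially the paper's route: semicontinuity plus stability of $\Ccal^{**}$ to identify $\Ecal_\infty$, vanishing of the $\operatorname{ch}_2$-defect to force $Z_b=0$ (your total-mass Chern--Weil comparison is just the pairing with $[\omega]$ of the paper's Sibley--Wentworth cohomology-class argument), and the graded object $\Ocal\oplus\Ical_x$ of the link on the exceptional $\PBbb^2$ for the tangent cone and the density. The step you flag as delicate is immediate: with $\deg e_1=\deg e_2=\deg e_3+1$ the relation $z_1e_1+z_2e_2+z_3^2e_3=0$ is already homogeneous, so nothing is of ``higher order''; the link $\operatorname{coker}\bigl(\Ocal(-2)\to\Ocal(-1)^{\oplus 2}\oplus\Ocal\bigr)$ is then the non-split extension of $\Ical_x$ by $\Ocal$, which is semistable, so no further weight modification is needed.
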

\begin{proof}
Denote
$$
\mu_i:=\frac{1}{8\pi^2}|F_{A_i}|^2\,\dvol,\qquad
\mu_\infty:=\frac{1}{8\pi^2}|F_{A_\infty}|^2\,\dvol.
$$

(1) follows from the general result in \cite{GSTW:18}. We include a simplified discussion in our setting. We first identify the limiting reflexive sheaf. By ``semi-continuity" (\cite{Donaldson:85}), there is a nonzero holomorphic map
$$
f:\Ccal^{**}\longrightarrow \Ecal_\infty .
$$
The sheaf $\Ecal_\infty$ is polystable of the same slope as $\Ccal^{**}$,
while $\Ccal^{**}$ is stable. Thus $f$ has to be an isomorphism. In particular,
$$
\operatorname{Sing}(\Ecal_\infty)
=
\operatorname{Sing}(\Ccal^{**})
=
\{p_1,p_2\}.
$$
It remains to show that there is no pure codimension-two bubbling. Write
$$
Z_{b}=\sum_k m_k\Sigma_k.
$$
The singular Bott--Chern formula
of \cite{SibleyWentworth:15} identifies the cohomology class of
$Z_{b}$ with the codimension-two Chern-character defect between
the smooth fibers and the limiting reflexive sheaf. In the present case this
defect class vanishes. Indeed, since
$$
0\longrightarrow \Ccal\longrightarrow \Ccal^{**}\longrightarrow Q
\longrightarrow 0
$$
has $Q$ supported in dimension zero, we have
$$
\operatorname{ch}_2(\Ccal^{**})
=
\operatorname{ch}_2(\Ccal).
$$
On the other hand, flatness of the family gives
$$
\operatorname{ch}_2(\Ecal_t)
=
\operatorname{ch}_2(\Ccal)
\qquad (t\neq 0).
$$
Together with $\Ecal_\infty\cong \Ccal^{**}$, this shows that blow-up cycle is zero. Since $Z_{b}$ is an effective curve cycle on $\PBbb^3$, and every
irreducible curve in $\PBbb^3$ has positive degree, a zero cohomology class
forces $Z_{b}=0.$ In particular, $\mu_i\rightharpoonup \mu_\infty$ as Radon measures on $\PBbb^3$. This proves the first assertion.

We now compute the tangent-cone data at the two point singularities.
After choosing analytic coordinates centered at either $p_1$ or $p_2$,
the sheaf $\Ccal^{**}$ is locally isomorphic to the cokernel of
$$
0\longrightarrow \Ocal
\xrightarrow{
\begin{pmatrix}
z_1\\
z_2\\
z_3^2
\end{pmatrix}}
\Ocal^3
\longrightarrow \Fcal
\longrightarrow 0 .
$$

Fix $j\in\{1,2\}$. This is the $k=2$ case of
\cite[Section~5]{ChenSun:18}. To make the algebraic input explicit, blow up
$p_j$ and denote the exceptional divisor by $D_j\cong \PBbb^2.$ The homogeneous presentation above gives an algebraic tangent-cone
extension whose restriction to $D_j$ is the rank-two bundle $\Gcal_j$
defined by
$$
0\longrightarrow \Ocal_{\PBbb^2}
\xrightarrow{
\begin{pmatrix}
Z_1\\
Z_2\\
Z_3^2
\end{pmatrix}}
\Ocal_{\PBbb^2}(1)\oplus \Ocal_{\PBbb^2}(1)\oplus
\Ocal_{\PBbb^2}(2)
\longrightarrow \Gcal_j
\longrightarrow 0 .
$$
The quotient is locally free because the sections
$Z_1,Z_2,Z_3^2$ have no common zero on $\PBbb^2$. A direct computation of
the Harder--Narasimhan--Seshadri graded object gives
$$
\operatorname{Gr}^{\mathrm{HNS}}(\Gcal_j)
\cong
\Ocal_{\PBbb^2}(2)
\oplus
\left(\mathcal I_{q_j}\otimes \Ocal_{\PBbb^2}(2)\right),
\qquad
q_j=[0:0:1].
$$
Hence
$$
\left(\operatorname{Gr}^{\mathrm{HNS}}(\Gcal_j)\right)^{**}
\cong
\Ocal_{\PBbb^2}(2)^{\oplus 2},
$$
and
$$
\left(\operatorname{Gr}^{\mathrm{HNS}}(\Gcal_j)\right)^{**}
/
\operatorname{Gr}^{\mathrm{HNS}}(\Gcal_j)
\cong
\Ocal_{q_j}
$$
has length one. The
analytic tangent-cone data are determined by this double dual and by the
codimension-two cycle of the quotient above. The cone connection
corresponding to
$\Ocal_{\PBbb^2}(2)^{\oplus 2}$ is the flat rank-two cone connection, and
the point $q_j$ lifts to the complex line
$$
L_j:=\overline{\pi_j^{-1}(q_j)}
\subset \CBbb^3 ,
$$
where
$$
\pi_j:\CBbb^3\setminus\{0\}\longrightarrow
D_j
$$
is the natural projection. Therefore
$$
\mathcal T_{p_j}(A_\infty)
=
(A_{T,j},Z_{T,j}),
\qquad
A_{T,j}\ \text{is flat},\qquad
Z_{T,j}=[L_j].
$$
In particular, the tangent-cone blow-up cycle has multiplicity one. This
proves the second assertion.

It remains to compare the energy densities. By the energy-measure
convergence in the tangent-cone construction and by the normalized density
convention above,
$$
\Theta_{p_j}(A_\infty)
=
\frac{1}{8\pi^3}
\left(
|F_{A_{T,j}}|^2\,\dvol
+
8\pi^2 Z_{T,j}
\right)(B_1(0)),
$$
where $Z_{T,j}$ is viewed as its associated integration current. Since
$A_{T,j}$ is flat and $Z_{T,j}=[L_j]$, this gives
$$
\Theta_{p_j}(A_\infty)
=
\frac{8\pi^2[L_j](B_1(0))}{8\pi^3}
=
\frac{8\pi^2\cdot \pi}{8\pi^3}
=
1 .
$$

Finally, since the codimension-two bubbling cycle of the original sequence
vanishes, namely $Z_b=0$, we know
$$
\mu_i\rightharpoonup \mu_\infty
$$
as Radon measures on $\PBbb^3$. Thus for $0<r<<1$
$$
\lim_{i\to\infty}\mu_i(B_r(p_j))
=
\mu_\infty(B_r(p_j))
$$
since there is no measure concentration on the boundary. Therefore
$$
\liminf_{i\to\infty}
\frac{\mu_i(B_r(p_j))}{8\pi^3r^2}
=
\frac{\mu_\infty(B_r(p_j))}{8\pi^3r^2}
$$
for such $r$. Letting $r\downarrow 0$, we obtain
$$
\Theta_{p_j}(\{A_i\}_i)
=
\Theta_{p_j}(A_\infty).
$$
Hence, for $j=1,2$,
$$
\Theta_{p_j}(\{A_i\}_i)
=
\Theta_{p_j}(A_\infty)
=
1 .
$$
\end{proof}

In particular, we obtain the following local consequence.

\begin{cor}\label{CorLocalChenSunExample}
Let $B\subset \CBbb^3$ be a sufficiently small ball centered at the origin,
and let $\Fcal_{\min}$ be the reflexive sheaf on $B$ defined by
$$
0\longrightarrow \Ocal_B
\xrightarrow{
\begin{pmatrix}
z_1\\
z_2\\
z_3^2
\end{pmatrix}}
\Ocal_B^3
\longrightarrow \Fcal_{\min}
\longrightarrow 0 .
$$
Then $\Fcal_{\min}$ supports an admissible HYM connection $A_\infty$, smooth on
$B\setminus\{0\}$, which arises as a Uhlenbeck limit of smooth HYM
connections over $B$. 
\end{cor}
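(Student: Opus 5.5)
Corollary~\ref{CorLocalChenSunExample} will be obtained by localizing the global degeneration of Theorem~\ref{thm:minimal-global-degeneration} at one of its two singular points. Concretely, we take the family $\Ecal$ on $\PBbb^3\times\Delta$ constructed above, with the HYM connections $A_i=A_{t_i}$ on the stable bundles $\Ecal_{t_i}$ supplied by the Donaldson--Uhlenbeck--Yau theorem. By Proposition~\ref{StableReflexiveSheafImpliesHYMConvergence}, after passing to a subsequence, the Uhlenbeck limit $(A_\infty,\Ecal_\infty,Z_b)$ satisfies $\Ecal_\infty\cong \Ccal^{**}=\Fcal$, $Z_b=0$, and $\operatorname{Sing}(A_\infty)=\{p_1,p_2\}$.

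We then fix $p_1$ and choose holomorphic coordinates centered at $p_1$ in which the stalk of $\Ccal^{**}$ is the cokernel of $(z_1,z_2,z_3^2)^T$. This is possible because the local model at $p_1$ is $\Fcal_{\min}$; explicitly, in the affine chart $Z_4=1$ the presentation of $\Gcal$ reads $(Z_1,Z_2,Z_3^2)$, so no coordinate change is even needed there. Shrinking the ball $B$ so that it avoids $p_2$, we obtain $\Ecal_\infty|_B\cong\Fcal_{\min}$. We define the connection in the corollary to be $A_\infty|_B$, transported through this identification. Admissibility is inherited from the global limit: the connection is HYM, has finite energy on $B$, and by the discussion in Subsection~\ref{Section-Uhlenbeck compactness} it is smooth after gauge away from $\operatorname{Sing}(\Ecal_\infty)\cap B=\{0\}$.

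To see that it is an Uhlenbeck limit of smooth HYM connections over $B$, we restrict the $A_i$ to $B$; these are smooth HYM connections there. The global convergence $(P_i^{-1})^*A_i\to A_\infty$ holds smoothly on compact subsets of $\PBbb^3\setminus\Sigma$, with $\Sigma\cap B=\{0\}$ because $Z_b=0$. Hence $A_i|_B$ converges in the Uhlenbeck sense to $A_\infty|_B$ with no blow-up cycle. The measure convergence in Proposition~\ref{StableReflexiveSheafImpliesHYMConvergence}(1) confirms that no energy is lost, beyond the point concentration that is already accounted for at the origin.

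The one point needing care, and the main obstacle, is making sure that the holomorphic identification of $\Ecal_\infty|_B$ with $\Fcal_{\min}$ is compatible with the Bando--Siu extension. This follows from uniqueness of the reflexive extension across the codimension-three point, since two reflexive sheaves that agree on $B\setminus\{0\}$ agree on $B$.
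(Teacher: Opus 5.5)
Your proposal is correct and is essentially the argument the paper intends: the paper gives no separate proof, treating the corollary as an immediate local consequence of Proposition~\ref{StableReflexiveSheafImpliesHYMConvergence}, and you restrict the global degeneration to a ball around $p_1$ in the chart $Z_4=1$, where $\Ccal^{**}$ is literally the cokernel of $(z_1,z_2,z_3^2)^T$ and $\Sigma\cap B=\{0\}$ because $Z_b=0$. Two minor remarks: the HYM condition on $B$ is with respect to the restriction of $\omega_{FS}$ rather than the flat metric, and the compatibility with the Bando--Siu extension that you flag as the main obstacle is automatic, since $\Ecal_\infty\cong\Ccal^{**}$ is already a global isomorphism of reflexive sheaves.
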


\begin{rmk}
The point of this example is that the connection part of the analytic
tangent cone does not encode the original isolated singularity. The sheaf
$\Fcal_{\min}$ has an essential singularity at the origin, whereas the connection
part $A_T$ of the analytic tangent cone is flat. The only remaining record
of curvature concentration is the codimension-two  blow-up cycle $[L]$.
Thus passing to the analytic tangent cone loses the local analytic type of
the original point singularity; it retains only the flat double dual and the
length-one quotient producing the line $L$.
The global construction above shows that this loss of information occurs for
genuine Uhlenbeck limits of smooth HYM connections.
In this sense, the phenomenon is intrinsic to the Uhlenbeck
compactification problem in complex dimension three.
\end{rmk}

\subsection{Bubble analysis}
We now analyze the local bubbling models associated with the global
degeneration near the two singular points $p_1$ and $p_2$. The first step is
to identify the local analytic type of the family.

\begin{lem}\label{LocalModelNearSingularPoints}
For each $i=1,2$, after shrinking the base disc and choosing a sufficiently
small analytic neighborhood $U_i\subset \PBbb^3$ of $p_i$, the family
$\Ecal|_{U_i\times\Delta}$ is locally analytically isomorphic, as a flat
family over $\Delta$, to the minimal model in Example
\ref{SimplestFertileFamily}.
\end{lem}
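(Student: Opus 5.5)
Near $p_1=[0,0,0,1]$ I will work in the affine chart $Z_4=1$ with coordinates $z_k=Z_k/Z_4$. Near $p_2=[0,1,0,0]$ I will use the chart $Z_2=1$. In both charts the local sections are $(z_1,\,u,\,z_3,\,tP)$, where near $p_1$ we have $u=z_2$, and near $p_2$ we have $u=z_4$ (the product $Z_2Z_4$ dehomogenizes to the local coordinate $u$). After trivializing all line bundles $\Ocal(k)$ on a small neighborhood $U_i$, the defining complex of $\Ecal$ becomes a complex of free modules on $U_i\times\Delta$, with first map $s=(z_1,u,z_3,t\tilde P)^T$. Here $\tilde P$ is the dehomogenized $P$, a unit near $p_i$ since $P(p_i)\neq0$.

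The plan is to reduce $M$ and $s$ to the matrices of Example~\ref{SimplestFertileFamily} by invertible changes of bases of the free modules and a change of coordinates. The first step is to absorb the unit $\tilde P$. I would set $t'=t\tilde P(z)$. This is not a base change on $\Delta$ alone, so instead I will rescale the fourth basis vector of $\mathcal U$ by $\tilde P^{-1}$. That turns the last entry of $s$ into $t$. I will then rescale the rows and columns of $M$ that contain $tP$ so that these entries become $\pm t$. Because $\tilde P$ is a unit, these operations are invertible over $\Ocal_{U_i\times\Delta}$ and commute with the $\Delta$-structure. A simpler alternative is to multiply the whole second syzygy in the same way; the point is that every $tP$ entry in $M$ occurs in the positions where $-t$ occurs in the model matrix.

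Next I would match the positions. After permuting the first three basis vectors of $\mathcal U$ (which reorders $(z_1,u,z_3)$ into $(z_1,u,z_3)$ or $(u,z_1,z_3)$), and permuting and sign-changing rows of $\mathcal V$, the $3\times3$ Koszul block of $M$ matches the block $\Phi$ of Example~\ref{SimplestFertileFamily}. The extra column $(0,0,Z_3,Z_2Z_4,Z_1)^T$ is the column through which the $z_3$ entry and the pair $(u,z_1)$ appear in the model, and the $tP$ entries land in the positions where the model has $-t$. This is a direct bookkeeping check. Along the way I must verify that the composite $M\circ s$ still vanishes, so that the complex remains a complex, and that I have changed bases of $\mathcal U$ and $\mathcal V$ compatibly.

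With the matrices identified, the cokernels are isomorphic as $\Ocal_{U_i\times\Delta}$-modules via an isomorphism that is linear over $\Ocal_\Delta$. Flatness is given by Lemma~\ref{ReflexiveProperty}. Since both complexes specialize at $t=0$ to resolutions of the central fibers, the identification restricts to the fibers, which gives an isomorphism of flat families over a shrunken disc. The main obstacle is the unit $\tilde P$: if the diagonal rescaling does not make all entries uniform simultaneously, the fallback is to change the coordinate to $t'=t\tilde P(z)$. The map $(z,t)\mapsto(z,t\tilde P(z))$ is a local biholomorphism that preserves the divisor $t=0$ and the fibers only up to a $z$-dependent rescaling, so in that case I would accept a local analytic isomorphism of the total space that preserves the central fiber. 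This is presumably what "locally analytically isomorphic as a flat family" allows.
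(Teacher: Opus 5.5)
You follow the paper's route: trivialize, dehomogenize, and match the matrices with the model. But your step for disposing of the unit $\tilde P$ does not work as described, and your fallback proves a weaker statement than the lemma.

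\textbf{Where the rescaling fails.} After replacing $e_4\in\mathcal U$ by $\tilde P e_4$, the fourth column of $M$ becomes $\tilde P\,(0,0,z_3,u,z_1)^T$. Rows four and five can then be divided by $\tilde P$. The third row, however, becomes $(-u,\,z_1,\,-t\tilde P,\,z_3\tilde P)$. Dividing it by $\tilde P$ spoils the entries $-u,z_1$, and rescaling the third column changes $s$. More careful bookkeeping does not help. For general $P$, no change of bases of the free modules removes $\tilde P$, as long as the coordinates $(z_1,u,z_3,t)$ are only permuted or changed in sign. The reason is that the image of the row space of $M$ in the Koszul syzygies of $s$ modulo $\mathfrak m$ remembers $\tilde P(p_i)$ up to sign.

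\textbf{Why the fallback is not enough.} Setting $t'=t\tilde P(z)$ does produce the model, but $(z,t)\mapsto(z,t\tilde P(z))$ does not commute with the projection to $\Delta$. It sends the fibers $B_t$, $t\neq 0$, to non-fibers, so it does not identify $\Ecal_t$ with the fiber of the model over $t$. That compatibility is exactly what ``as a flat family over $\Delta$'' requires, so this is a genuine gap rather than a harmless reinterpretation.

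\textbf{The missing idea.} Absorb $\tilde P$ into the spatial coordinates, using that the model is linear in its four variables.
\begin{enumerate}
\item Swap the first two basis vectors of $\mathcal U$. The local complex is then literally the complex of Example~\ref{SimplestFertileFamily} with $(z_1,z_2,z_3,t)$ replaced by $(u,z_1,z_3,t\tilde P)$, up to a permutation of the rows of $M$. With the order $(z_1,u,z_3)$, which you also consider, the row containing the $t$-entry has the wrong relative signs, and an extra sign change of a coordinate would be needed.
\item All entries of $s$ and $M$ are linear in these four functions. So after the evident rescalings of the first and last terms of the complex by the unit $\tilde P$, the maps become $\tilde P^{-1}s$ and $\tilde P^{-1}M$.
\item This is the model complex written in $(w_1,w_2,w_3,t)$, where $(w_1,w_2,w_3)=(u,z_1,z_3)/\tilde P$.
\item Since $\tilde P$ depends only on the point of $U_i$ and $\tilde P(p_i)\neq 0$, the $w_k$ are holomorphic coordinates centered at $p_i$; their Jacobian there is $\tilde P(p_i)^{-1}$ times the identity.
\end{enumerate}
Hence $(z,t)\mapsto(w,t)$ is an isomorphism over $\Delta$ that keeps $t$ as the base coordinate, as the paper's proof requires.
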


\begin{proof}
All locally free sheaves appearing in the defining complex of $\Ecal$ are
trivial on $U_i\times\Delta$. Choose trivializations compatible with the
maps in the complex, and choose holomorphic coordinates centered at
$(p_i,0)$, with $t$ the coordinate on the base disc. By the local forms
of the defining sections at $p_i$, the resulting matrix is equivalent, up to
invertible changes of source and target and a holomorphic change of
coordinates, to the defining matrix in Example
\ref{SimplestFertileFamily}. The claim follows.
\end{proof}

Fix one of the two singular points, say $p_i$. Since the two points have
the same local analytic model, it is enough to work at this point. Choose
holomorphic coordinates identifying a neighborhood of $(p_i,0)$ in
$\PBbb^3\times\Delta$ with
$$
B\times\Delta\subset \CBbb^3\times\CBbb,
$$
so that $p_i$ corresponds to $o\in B$ and the central fiber is
$B\times\{0\}$. We keep the notation $\Ecal$ for the restricted family on
$B\times\Delta$. Let
$$
\pi:\widehat{B\times\Delta}\longrightarrow B\times\Delta
$$
be the blow-up of the origin, and denote by
$$
D\cong \PBbb^3
$$
the exceptional divisor. The proper transform of the central fiber meets
$D$ in a hyperplane
$$
H\cong \PBbb^2\subset D .
$$

A reflexive sheaf $\widehat{\Ecal}$ on $\widehat{B\times\Delta}$ is called
an extension of $\Ecal$ at $p_i$ if
$$
\widehat{\Ecal}|_{\widehat{B\times\Delta}\setminus D}
\cong
\pi^*\!\left(\Ecal|_{(B\times\Delta)\setminus\{0\}}\right)
$$
under the natural identification
$$
\widehat{B\times\Delta}\setminus D
\cong
(B\times\Delta)\setminus\{0\}.
$$
Given such an extension, we call $\widehat{\Ecal}|_D$
the algebraic bubble of $\Ecal$ at $p_i$, which is a torsion-free sheaf on $D$ (see \cite[Section~2.2]{ChenSun:20a}).

\begin{lem}
There exists an extension $\widehat{\Ecal}$ of $\Ecal$ at $p_i$ such that
\begin{enumerate}
\item the algebraic bubble $\widehat{\Ecal}|_D$ is a semistable rank-two
bundle on $D\cong \PBbb^3$ with $c_1=0$ and $c_2=1$;

\item the restriction $\widehat{\Ecal}|_H$ is semistable on
$H\cong \PBbb^2$. In addition, its Harder--Narasimhan--Seshadri graded sheaf
recovers the analytic tangent-cone data of $A_\infty$ at $p_i$.
\end{enumerate}

Furthermore, suppose $\widehat{\Ecal}'$ is another extension of $\Ecal$ at $p_i$ so that $\widehat{\Ecal}'|_D$ is semistable, then up to isomorphisms, it is given by tensoring $\widehat{\Ecal}$ with a power of the line bundle
associated with the exceptional divisor.
\end{lem}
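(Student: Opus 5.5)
Since $\Ecal|_{U_i\times\Delta}$ is locally isomorphic to $\Ecal_{\min}$ of Example~\ref{SimplestFertileFamily} (Lemma~\ref{LocalModelNearSingularPoints}), I will work entirely with the explicit resolution
$$
0\to \Ocal\xrightarrow{(z_1,z_2,z_3,t)^T}\Ocal^{\oplus 4}\xrightarrow{M}\Ocal^{\oplus 5}\to \Ecal\to 0 .
$$
All entries of $M$ and of the first map are linear in $(z_1,z_2,z_3,t)$, apart from the $z_3$ entries, which are also linear. The complex is therefore homogeneous: it is the pullback of a graded complex on $\CBbb^4$. The natural candidate extension is the one from the algebraic tangent cone construction of \cite{ChenSun:20a}. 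Pull the complex back to $\widehat{B\times\Delta}$. On the blow-up chart, write the linear sections as $w\cdot(\text{homogeneous coordinates on }D)$, with $w$ the defining equation of $D$. Then twist the summands so that $M$ becomes $\pi^*$ of the linear matrix divided by $w$. This gives a complex of vector bundles whose restriction to $D\cong\PBbb^3$ is the monad
$$
\Ocal_{\PBbb^3}(-1)\xrightarrow{(Z_1,Z_2,Z_3,T)^T}\Ocal_{\PBbb^3}^{\oplus 4}\xrightarrow{M}\Ocal_{\PBbb^3}(1)^{\oplus 5}.
$$
Its cokernel is the cokernel over $\PBbb^3$ of the same linear matrix. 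Define $\widehat\Ecal$ as the cokernel of the lifted complex. Since $M$ has rank $3$ away from the origin of $\CBbb^4$ (Proposition~\ref{SmoohtlizeAClassOfSingularities}), $M$ has constant rank on $D$. So $\widehat\Ecal$ is locally free near $D$, and it agrees with $\pi^*\Ecal$ off $D$.

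Next I compute $\widehat\Ecal|_D$. It has rank two, being the cokernel of a constant-rank map. Its Chern character is read off from the resolution and then normalized by a twist. The resolution gives $c(\widehat\Ecal|_D)=(1+h)^5/\bigl((1-h)^{-1}\cdot\ldots\bigr)$, i.e.\ $c=(1+h)^5(1-h)$ up to a twist by $\Ocal(k)$. I will choose $k$ so that $c_1=0$ and then check that $c_2=1$. Semistability then amounts to $H^0(\widehat\Ecal|_D(-1))=0$. I will check this by chasing cohomology through the short exact sequences obtained by splitting the resolution, exactly as in the proof of Proposition~\ref{prop:global-minimal-family}; intermediate cohomology of line bundles on $\PBbb^3$ vanishes. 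I expect this bundle to be a null-correlation-type instanton of charge one, which would confirm $c_2=1$.

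For item (2), set $T=0$. The restriction to $H$ is the cokernel of the $t=0$ homogeneous complex, which is the algebraic tangent-cone extension of $\Ccal$. I will compare it with $\Gcal_j$ from the proof of Proposition~\ref{StableReflexiveSheafImpliesHYMConvergence}. To get the graded object, exhibit $\Ocal_{H}$ as a destabilizing-slope subsheaf (coming from $e_3$, the kernel direction) with quotient $\mathcal I_{q}$, where $q=[0:0:1]$. This yields $\operatorname{Gr}^{\mathrm{HNS}}=\Ocal\oplus\mathcal I_q$, matching the flat cone connection together with the line $[L]$.

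For uniqueness, let $\widehat\Ecal'$ be another extension with semistable restriction. Both extensions agree off $D$, so $\widehat\Ecal'\cong\widehat\Ecal\otimes\Ocal(kD)$ holds on a neighbourhood, at least after comparing them through the natural map $\widehat\Ecal\to\widehat\Ecal'(mD)$. Following the uniqueness argument of \cite{ChenSun:20a}, I take the minimal $m$ for which this map is defined and nonzero on $D$. Both restrictions are semistable with the appropriate slopes, and $\widehat\Ecal|_D$ is in fact stable (having $c_2=1$, it has no line subbundles of degree zero). Hence the map restricted to $D$ is an isomorphism. The main obstacle is the semistability step: the cohomology chase may require the precise twist, and I must verify that the restriction is stable rather than only semistable, since uniqueness uses stability of $\widehat\Ecal|_D$ rather than of $\widehat\Ecal|_H$.
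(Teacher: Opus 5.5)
Your proposal is correct and follows the same route as the paper. You pass to the homogeneous minimal model via Lemma~\ref{LocalModelNearSingularPoints}, divide out the exceptional factor so that the restriction to $D$ is the link bundle (your Chern class and $H^0$ computations supply details the paper leaves implicit, and in fact show this bundle is the stable null-correlation bundle), restrict to $H$ to get $\operatorname{Gr}^{\mathrm{HNS}}\cong\Ocal_H\oplus\mathcal I_q$, and then use the Chen--Sun uniqueness argument. One small correction to your uniqueness sketch: stability of $\widehat\Ecal|_D$ is not the key input. What you need is to force the twists to cancel, either by running the minimal-pole-order comparison in both directions or by comparing determinants; after that, semistability of both restrictions already suffices.
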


\begin{proof}
By Lemma \ref{LocalModelNearSingularPoints}, we may replace the germ of
the family at $(p_i,0)$ by the minimal local model in Example
\ref{SimplestFertileFamily}. In that model the defining presentation is
homogeneous with respect to the variables on $B\times\Delta$. Blowing up
the origin and removing the common exceptional factor in the pulled-back
presentation gives a reflexive extension $\widehat{\Ecal}$ across the
exceptional divisor $D$.

The restriction $\widehat{\Ecal}|_D$ is precisely the link bundle of the
homogeneous model. By the computation in Example
\ref{SimplestFertileFamily}, this is a semistable rank-two bundle on
$D\cong \PBbb^3$ with
$$
c_1(\widehat{\Ecal}|_D)=0,
\qquad
c_2(\widehat{\Ecal}|_D)=1 .
$$
This proves the existence of an extension with the required algebraic
bubble.

It remains to relate its restriction to the central-fiber hyperplane
$H\cong \PBbb^2$ to the analytic tangent cone of $A_\infty$ at $p_i$.
Restricting the homogeneous presentation to $H$ gives a semistable
rank-two sheaf whose Harder--Narasimhan--Seshadri graded object is, after
the normalization $c_1=0$,
$$
\operatorname{Gr}^{\mathrm{HNS}}(\widehat{\Ecal}|_H)
\cong
\Ocal_H\oplus \mathcal I_q
$$
for a point $q\in H$. Equivalently,
$$
\left(
\operatorname{Gr}^{\mathrm{HNS}}(\widehat{\Ecal}|_H)
\right)^{**}
\cong
\Ocal_H^{\oplus 2},
\qquad
\left(
\operatorname{Gr}^{\mathrm{HNS}}(\widehat{\Ecal}|_H)
\right)^{**}
/
\operatorname{Gr}^{\mathrm{HNS}}(\widehat{\Ecal}|_H)
\cong
\Ocal_q .
$$
By the algebro-geometric description of analytic tangent cones in
\cite{ChenSun:18}, this data determines the analytic tangent-cone data of
$A_\infty$ at $p_i$: the connection part is the flat rank-two cone
connection, and the blow-up cycle is the complex line over $q$ with
multiplicity one.

Finally, suppose $\widehat{\Ecal}'$ is another extension of $\Ecal$ at
$p_i$ whose algebraic bubble is semistable. Then
$\widehat{\Ecal}'|_D$ is an optimal algebraic tangent cone. Since the
Harder--Narasimhan filtration has only one step in the semistable case,
the uniqueness theorem for optimal extensions in \cite{ChenSun:20b}
implies that $\widehat{\Ecal}'$ is equivalent to $\widehat{\Ecal}$, namely
$$
\widehat{\Ecal}'\cong \widehat{\Ecal}\otimes \Ocal_{\widehat{B\times\Delta}}(kD)
$$
for some $k\in\mathbb Z$. This proves the
claim.
\end{proof}

We now rescale the original sequence at one of the point singularities.
Fix one of the two points and denote it by $p$. Choose holomorphic
coordinates identifying a neighborhood of $p$ with a ball
$B\subset\CBbb^3$, sending $p$ to the origin. For a sequence of centered
scales $r_i\downarrow 0$, let
$$
\delta_i:B_{r_i^{-1}}\longrightarrow B,\qquad
z\longmapsto r_i z,
$$
and set
$$
\widetilde A_i:=\delta_i^*A_i .
$$
After passing to a subsequence, Uhlenbeck compactness on compact subsets of
$\CBbb^3$ gives an Uhlenbeck limit
$$
(B_\infty,\mathcal B_\infty,Z_b')
$$
of the rescaled sequence $\widetilde A_i$. Here $B_\infty$ is the limiting
admissible HYM connection, $\mathcal B_\infty$ is the limiting reflexive
sheaf on $\CBbb^3$, and $Z_b'$ is the codimension-two blow-up cycle of this
Uhlenbeck limit.

There are two different limiting regimes. If one rescales the admissible
limit $A_\infty$ at $p$, one obtains the analytic tangent-cone data
$$
\mathcal T_p(A_\infty)=(A_{T,p},[L_p]).
$$
In this subsection we instead rescale the original smooth sequence
$A_i$ at a centered scale where the point singularity is resolved. The
resulting limit is a non-flat analytic bubble $B_\infty$. The key point is
that this Uhlenbeck limit has zero blow-up cycle, even though its tangent
cone at infinity has a nontrivial blow-up cycle.

\begin{prop}\label{MinimalAnalyticBubble}
There exists a sequence of centered scales $r_i\downarrow 0$ such that the
rescaled sequence $\widetilde A_i=\delta_i^*A_i$ has an Uhlenbeck limit
$$
(B_\infty,\mathcal B_\infty,Z_b')
$$
on $\CBbb^3$ satisfying
$$
Z_b'=0,
$$
and $\mathcal B_\infty$ is locally free. In particular, up to gauge transforms,
$$
\widetilde A_i\longrightarrow B_\infty
$$
smoothly on compact subsets of $\CBbb^3$, and $B_\infty$ is non-flat.
Moreover, the following hold.
\begin{enumerate}
\item The connection $B_\infty$ has normalized density one at infinity:
$$
\Theta_\infty(B_\infty)
:=
\lim_{R\to\infty}
\frac{1}{8\pi^3R^2}
\int_{B_R(0)} |F_{B_\infty}|^2\,\dvol
=
1 .
$$

\item The analytic tangent-cone data of $B_\infty$ at infinity are
$$
\mathcal T_\infty(B_\infty)=(B_T,[L]),
$$
where $B_T$ is flat and $L\subset\CBbb^3$ is a complex line. In particular,
the tangent-cone blow-up cycle at infinity is a line with multiplicity one.
\end{enumerate}
\end{prop}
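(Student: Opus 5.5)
The scale will be chosen by energy concentration, and I will use the convergence and density facts of Proposition~\ref{StableReflexiveSheafImpliesHYMConvergence}. Write $\mu_i=\frac{1}{8\pi^2}|F_{A_i}|^2\dvol$ and fix a small $\epsilon_0>0$ below the $\epsilon$-regularity threshold. Since $A_i$ is smooth, $r^{-2}\mu_i(B_r(p))\to 0$ as $r\to 0$. On the other hand, by item (3) of Proposition~\ref{StableReflexiveSheafImpliesHYMConvergence}, $\liminf_i r^{-2}\mu_i(B_r(p))/\pi$ is close to $1$ for small fixed $r$. I therefore define
$$
r_i:=\inf\Bigl\{r>0:\ \sup_{|x|\le 1,\ s\le 1} (sr)^{-2}\mu_i(B_{sr}(xr))\geq \epsilon_0\Bigr\}.
$$
This is the standard centered-scale choice. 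It satisfies $r_i\downarrow 0$, because $A_\infty$ is singular at $p$ and the energy cannot stay below $\epsilon_0$ at a fixed scale. Scale invariance of the normalized energy in real dimension six is what makes this definition work. After rescaling, every ball of radius at most $1$ meeting $B_1(0)$ carries normalized energy below $\epsilon_0$, with equality attained somewhere. Monotonicity for HYM connections propagates this smallness to all unit balls. So $\epsilon$-regularity gives uniform curvature bounds on compact sets, and hence $Z_b'=0$ and $\operatorname{Sing}(\mathcal B_\infty)=\emptyset$. The limit $\mathcal B_\infty$ is then locally free, the convergence is smooth up to gauge, and $B_\infty$ is non-flat because the energy $\epsilon_0$ on a unit ball survives in the limit.

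For the density at infinity, monotonicity of $R^{-2}\int_{B_R}|F|^2$ for the smooth HYM connection $B_\infty$ ensures the limit $\Theta_\infty(B_\infty)$ exists. Scale invariance gives
$$
\frac{1}{8\pi^3R^2}\int_{B_R}|F_{\widetilde A_i}|^2=\frac{1}{8\pi^3(Rr_i)^2}\int_{B_{Rr_i}(p)}|F_{A_i}|^2 .
$$
Taking $i\to\infty$ and then $R\to\infty$ yields $\Theta_\infty(B_\infty)\le\Theta_p(\{A_i\})=1$, by upper semicontinuity along the diagonal together with monotonicity. The reverse inequality is where I expect the main difficulty, namely possible energy loss in a neck region $r_iR\ll |z|\ll \delta$. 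I would first try to rule this out by a no-neck argument using the fact that density is an integer, together with rigidity of the algebraic data. Tangent cones at infinity of $B_\infty$ have density in $\{0\}\cup[1,\infty)$: a flat part plus an integral cycle of lines, or a nonflat cone of density at least $1$. Density $0$ would force $B_\infty$ to be flat by monotonicity, which is excluded. So $\Theta_\infty(B_\infty)=1$.

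Finally, I take a tangent cone at infinity by blowing down $B_\infty$ via $z\mapsto Rz$ with $R\to\infty$. This gives data $(B_T,Z_T)$ of density $1$. If $Z_T=0$, then $B_T$ is a nonflat cone connection of density $1$ with an isolated singularity. Its sheaf would be a reflexive cone sheaf of rank two, and I would exclude it by comparison with the algebraic bubble: the extension $\widehat{\Ecal}$ from the preceding lemma restricts on $H$ with graded object $\Ocal_H\oplus\Ical_q$, and I would argue that this forces the cone sheaf to be $\Ocal^{\oplus 2}$. Hence $Z_T\neq 0$. Density $1$ then forces $Z_T=[L]$ with multiplicity one and $B_T$ flat, since a multiplicity-one line already contributes exactly $1$. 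The identification of the cone sheaf is the second delicate step. If the algebraic comparison fails, I would fall back on the energy bound alone: a nonflat rank-two cone on $\CBbb^3$ has density strictly greater than $1$, as its link bundle on $\PBbb^2$ has $c_2>0$ after the $c_1=0$ normalization.
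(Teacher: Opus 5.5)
Your density-at-infinity argument matches the paper's: $\Theta_\infty(B_\infty)\le 1$ by semicontinuity and monotonicity, densities lie in $\{0\}\cup[1,\infty)$, and non-flatness gives $\Theta_\infty=1$, so no neck analysis is needed. There are two gaps elsewhere.

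\textbf{First gap: $Z_b'=0$ and local freeness.} Your first paragraph does not establish $Z_b'=0$, nor that $\mathcal B_\infty$ is locally free. Your scale only controls balls of radius at most $1$ centered in $\bar B_1(0)$. The step ``monotonicity propagates this smallness to all unit balls'' is false. Monotonicity at a center $y$ bounds the normalized energy of $B_1(y)$ only by that of larger balls $B_\rho(y)$. For $\rho\gtrsim |y|$ these balls contain the core near the origin, so you recover only an order-one density bound, not $\epsilon_0$. A centered scale does not prevent the rescaled sequence from concentrating along a curve missing $B_1(0)$, or at an off-center point. So $\epsilon$-regularity gives smooth convergence only near $\bar B_1(0)$. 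The missing idea is the density budget the paper uses:
\begin{itemize}
\item Since $\Theta_p(\{A_i\}_i)=1$, the weak limit $\mu'$ of the rescaled measures $\frac{1}{8\pi^2}|F_{\widetilde A_i}|^2\dvol$ satisfies $\mu'(B_R(0))\le \pi R^2$ for every $R$.
\item A nonzero blow-up cycle has area density at least one at infinity.
\item An essential singular point $y$ of $\mathcal B_\infty$ has $\Theta_y(B_\infty)\geq 1$.
\end{itemize}
Either of the last two, added to the positive density at infinity of the non-flat part (monotonicity for $B_\infty$), exceeds the budget. In the singular-point case, equality would force $B_\infty$ to be a density-one cone at $y$, which the fact below excludes. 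You already have these ingredients in your second paragraph but apply them only at infinity.

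\textbf{Second gap: item (2).} The comparison with the algebraic bubble is not available. The restriction $\widehat{\Ecal}|_H$ computes the tangent cone of $A_\infty$ at $p$. That is a different limiting regime from the tangent cone at infinity of $B_\infty$. The paper treats any relation between $B_\infty$ and the algebraic bubble only as a conditional hypothesis, and even expects the two to differ at infinity.

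Your fallback is the paper's route, but its justification fails exactly where it is needed. The fact $c_2>0$ gives only $c_2\ge 1$, i.e.\ density $\ge 1$. That does not rule out a non-flat cone of density exactly one with $Z_T=0$, which is precisely the case competing with $(B_T,[L])$. You need that no non-flat polystable rank-two bundle on $\PBbb^2$ has $c_1=0$ and $c_2=1$; the paper cites Donaldson--Kronheimer for this. A direct argument: Riemann--Roch gives $\chi=2-c_2=1$, and semistability kills $h^2$, so there is a nonzero section. Hence the bundle is not stable, and a polystable non-stable one is $\Ocal^{\oplus 2}$, which has $c_2=0$. With this fact, the identification $B_T$ flat and $Z_T=[L]$ goes through as in the paper.
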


\begin{proof}
Choose the centered scales $r_i$ at the resolving scale of the point
singularity. Equivalently, after passing to a subsequence, we may normalize
the rescaled sequence so that a fixed positive amount of curvature remains
on a compact ball. Thus the Uhlenbeck limit of $\widetilde A_i$ is
non-flat (\cite{Nakajima:88}). Let $(B_\infty,\mathcal B_\infty,Z_b')$
be this Uhlenbeck limit. Since the normalized density of the original
sequence at $p$ is
$$
\Theta_p(\{A_i\}_i)=1,
$$
the total normalized density available to any centered rescaling at $p$ is
at most one. Since $B_\infty$ is non-flat, it has positive density at
infinity. On the other hand, any nonzero codimension-two blow-up cycle in
the Uhlenbeck limit of the rescaled sequence contributes at least one unit
of normalized density. Therefore
$$
Z_b'=0.
$$
The same density-counting argument rules out essential singularities of
$\mathcal B_\infty$: an essential singularity would have an analytic tangent
cone with positive integral blow-up contribution, again using at least one
unit of normalized density. Hence $\mathcal B_\infty$ is locally free, and
the rescaled convergence is smooth on compact subsets of $\CBbb^3$.

We now compute the density at infinity. By Price monotonicity, the limit
$$
\Theta_\infty(B_\infty)
=
\lim_{R\to\infty}
\frac{1}{8\pi^3R^2}
\int_{B_R(0)} |F_{B_\infty}|^2\,\dvol
$$
exists. Since $B_\infty$ is non-flat,
$$
\Theta_\infty(B_\infty)>0.
$$
The density bound coming from
$\Theta_p(\{A_i\}_i)=1$ gives
$$
\Theta_\infty(B_\infty)\leq 1.
$$

Take an analytic tangent cone of $B_\infty$ at infinity:
$$
\mathcal T_\infty(B_\infty)=(B_T,Z_T).
$$
Its energy measure has the form
$$
|F_{B_T}|^2\,\dvol+8\pi^2[Z_T].
$$
If $0<\Theta_\infty(B_\infty)<1$, then $Z_T$ must vanish, since any nonzero
conical codimension-two cycle contributes at least one unit of normalized
density. The cone connection $B_T$ would then descend to a HYM connection
on a rank-two polystable sheaf over $\PBbb^2$ with $c_1=0$ and
$c_2<1$, hence with $c_2=0$, forcing the cone to be flat. This contradicts
the positivity of the density. Therefore
$$
\Theta_\infty(B_\infty)=1.
$$

It remains to identify the tangent-cone data at infinity. If the connection
part $B_T$ were non-flat, then, under the determinant normalization, it
would descend to a non-flat HYM connection on a rank-two polystable sheaf
over $\PBbb^2$ with $c_1=0$ and $c_2=1$. Such a connection does not exist
(\cite[Example~4.1.3]{DonaldsonKronheimer:90}). Hence $B_T$ is flat.

Thus all the density at infinity is carried by the blow-up cycle $Z_T$. Since
the normalized density is one, the projectivization of $Z_T$ is an effective
zero-dimensional cycle of degree one on $\PBbb^2$. Therefore it is a single
point with multiplicity one. Equivalently, the corresponding conical cycle in
$\CBbb^3$ is a complex line with multiplicity one:
$$
Z_T=[L].
$$
Hence
$$
\mathcal T_\infty(B_\infty)=(B_T,[L]),
$$
where $B_T$ is flat. This proves the proposition.
\end{proof}
This finishes the proof of Theorem \ref{thm:minimal-global-degeneration}.
\subsection{Compactifications of the minimal analytic bubble}
\label{subsec:compactification-minimal-bubble}
The bubble $B_\infty$ constructed above has a special feature: its density
at infinity exhausts the full normalized density available at the point $p$,
namely
$$
\Theta_\infty(B_\infty)
=
\Theta_p(\{A_i\}_i)
=
\Theta_p(A_\infty)
=
1 .
$$
However, $B_\infty$ is not the analytic tangent cone of the admissible limit
$A_\infty$ at $p$. The word ``minimal'' refers to the density:
a nonzero conical blow-up cycle contributes at least one unit of density,
and $B_\infty$ has exactly one unit at infinity. We call such a bubble a
\emph{minimal-density analytic bubble}, or simply a \emph{minimal analytic
bubble.}
Motivated by the ASD compactification picture on $\CBbb^2$, we use the
following expected compatibility as a conditional hypothesis. Let
$$
H_\infty:=\PBbb^3\setminus \CBbb^3\cong \PBbb^2
$$
be the hyperplane at infinity. We say that the minimal analytic bubble
$B_\infty$ is algebraic in this sense if it admits a torsion-free
compactification $\Gcal$ on $\PBbb^3$ such that
$\Gcal|_{\CBbb^3}$ is the holomorphic bundle defined by $B_\infty$, and the
boundary sheaf
$$
\Gcal_\infty:=\Gcal|_{H_\infty}
$$
is semistable and compatible with the analytic tangent-cone data
$$
\mathcal T_\infty(B_\infty)=(B_T,[L]).
$$
In the present case, this compatibility means that, for the point
$q\in H_\infty$ corresponding to the line $L\subset\CBbb^3$,
$$
\left(\operatorname{Gr}^{\mathrm{HNS}}(\Gcal_\infty)\right)^{**}
\cong
\Ocal_{H_\infty}^{\oplus 2},
\qquad
\left(\operatorname{Gr}^{\mathrm{HNS}}(\Gcal_\infty)\right)^{**}
/
\operatorname{Gr}^{\mathrm{HNS}}(\Gcal_\infty)
\cong
\Ocal_q .
$$
Equivalently, the HYM cone determined by
$\left(\operatorname{Gr}^{\mathrm{HNS}}(\Gcal_\infty)\right)^{**}$ is the
flat cone connection $B_T$, and the associated algebraic blow-up cycle is
the point $q$ with multiplicity one.

Assume that $B_\infty$ is algebraic in the sense above, and fix a compatible
compactification $\Gcal$. Put
$$
\Gcal_\infty:=\left.\Gcal\right|_{H_\infty}.
$$

\begin{prop}\label{Prop-4.10}
Under the compatibility condition above, the boundary sheaf $\Gcal_\infty$
has one of the following two forms. Either it is the unique non-split
extension
$$
0\longrightarrow \Ocal_{H_\infty}
\longrightarrow \Gcal_\infty
\longrightarrow \Ical_q
\longrightarrow 0,
$$
or
$$
\Gcal_\infty\cong \Ocal_{H_\infty}\oplus \Ical_q .
$$
In the first case, $\Gcal_\infty$ is locally free, strictly semistable, and
has $c_1=0$ and $c_2=1$; it is the boundary restriction of the semistable
algebraic bubble constructed above. In the second case, no family forming
the minimal singularity can have an algebraic bubble whose boundary
restriction is $\Gcal_\infty$.
\end{prop}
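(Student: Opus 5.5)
Since $\Gcal_\infty$ is semistable of rank two with normalized $c_1=0$, I would work with a Jordan--H\"older filtration $0\to \Acal\to \Gcal_\infty\to \Bcal\to 0$ by rank-one torsion-free sheaves of slope zero. (Here $\Acal,\Bcal$ are just names for the two rank-one pieces.) The compatibility hypothesis fixes $\operatorname{Gr}^{\mathrm{HNS}}(\Gcal_\infty)$: its double dual is $\Ocal_{H_\infty}^{\oplus 2}$ and the quotient is $\Ocal_q$. So $\operatorname{Gr}^{\mathrm{HNS}}(\Gcal_\infty)\cong \Ocal_{H_\infty}\oplus \Ical_q$. This is because a rank-one torsion-free sheaf of degree zero on $\PBbb^2$ is $\Ical_Z$ with total length $l(Z)=1$ split between the two pieces. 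Hence there are only two orderings: $(\Acal,\Bcal)=(\Ical_q,\Ocal_{H_\infty})$ or $(\Acal,\Bcal)=(\Ocal_{H_\infty},\Ical_q)$. I take it as part of the hypothesis that $\Gcal_\infty$ is torsion-free. Otherwise I would first restrict to a general hyperplane, or use that $\Gcal$ is torsion-free and $H_\infty$ is a Cartier divisor avoiding the associated points.

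In the first ordering, the extension class lies in $\operatorname{Ext}^1(\Ocal,\Ical_q)=H^1(\Ical_q)$. This group vanishes by the sequence $0\to\Ical_q\to\Ocal\to\Ocal_q\to 0$, because $H^0(\Ocal)\to H^0(\Ocal_q)$ is onto. So $\Gcal_\infty\cong\Ocal_{H_\infty}\oplus\Ical_q$. In the second ordering, Serre duality gives $\operatorname{Ext}^1(\Ical_q,\Ocal)\cong H^1(\Ical_q(-3))^*$. This group is one-dimensional, since $H^0(\Ocal(-3))=H^1(\Ocal(-3))=0$ and $H^0(\Ocal_q)=\CBbb$. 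The extension is therefore either split, which again gives $\Ocal\oplus\Ical_q$, or the unique non-split extension up to scalar.

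For the non-split extension, I would check local freeness by the Serre correspondence. Locally at $q$, the extension class generates $\Ecal xt^1(\Ical_q,\Ocal)\cong\Ocal_q$, which is exactly the local criterion for the middle term to be a bundle. The Chern classes are $c_1=0$ and $c_2=l(q)=1$. It is strictly semistable because $\Ocal_{H_\infty}$ is a destabilizing subsheaf of slope zero, and no subsheaf of positive slope exists: such a subsheaf would have to map nonzero to $\Ical_q$ or lie in $\Ocal$. To identify it with the boundary restriction of the semistable algebraic bubble from the preceding lemma, I would use two facts. First, $\widehat\Ecal|_H$ is the restriction of the bundle $\widehat\Ecal|_D$, so it is locally free. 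Second, it is semistable with $\operatorname{Gr}^{\mathrm{HNS}}\cong\Ocal_H\oplus\Ical_q$. By the dichotomy just proved it cannot be the split sheaf, which is not locally free at $q$, so it must be the non-split extension.

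For the second case, I would argue that any family forming the minimal singularity is, by Corollary~\ref{cor:unique-minimal-central-fiber} and Lemma~\ref{LocalModelNearSingularPoints}, locally of the model type. Its algebraic bubbles with semistable restriction are then twists $\widehat\Ecal\otimes\Ocal(kD)$ of the one constructed, by the uniqueness part of the preceding lemma. Their boundary restrictions are all locally free, so none equals $\Ocal\oplus\Ical_q$.

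The main obstacle is this last step. The uniqueness lemma is phrased for semistable $\widehat\Ecal'|_D$, whereas here we only know semistability of the restriction to $H$. I would first try to show that semistability of the restriction to $H$ forces semistability on $D$, or at least excludes a non-trivial Harder--Narasimhan filtration, via a restriction argument for rank-two sheaves with $c_1=0$. Failing that, I would use the optimal-extension uniqueness of \cite{ChenSun:20b} directly. A further subtlety is that different smoothing families, for example the ramified base changes in Remark~\ref{Rmk:Nonreducedness}, are not analytically isomorphic. For these I would compute the bubble explicitly and check that it is again locally free along $H$.
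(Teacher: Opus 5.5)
Your classification of $\Gcal_\infty$ is correct and amounts to the paper's argument. This covers the two Jordan--H\"older orderings, $H^1(\Ical_q)=0$, $\dim\operatorname{Ext}^1(\Ical_q,\Ocal)=1$, local freeness of the non-split extension, and the identification of the constructed bubble's boundary via the dichotomy. The paper reaches the same sequence differently: it produces $\Ocal_{H_\infty}\subset\Gcal_\infty$ from a section, using $\chi(\Gcal_\infty)=1$ and $H^2(\Gcal_\infty)=0$.

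The gap is in excluding the split type. You reduce to the claim that every family forming the minimal singularity is locally the model of Example~\ref{SimplestFertileFamily}. That claim is not available:
\begin{itemize}
\item Corollary~\ref{cor:unique-minimal-central-fiber} classifies only the pair $\Ccal\subset\Ccal^{**}$, not the family; Remark~\ref{Rmk:Nonreducedness} says this explicitly.
\item Lemma~\ref{LocalModelNearSingularPoints} concerns only the one global family of this section.
\item The uniqueness statement you then invoke compares extensions of a single fixed family. It gives nothing for an arbitrary $\Ecal'$ with the same central fiber, and it also requires semistability on $D$, which you do not have.
\end{itemize}
Treating ramified base changes by hand does not cover general families, so the second case remains unproved.

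The missing idea is to work on the central fiber, where the needed classification does exist (Lemma~\ref{lem:minimal-reflexive-hull}). Restrict $\widehat{\Ecal'}$ to the proper transform $\widetilde B_0\cong\operatorname{Bl}_oB$ of the central fiber. This restriction has the following properties:
\begin{itemize}
\item It is torsion-free.
\item Off $H$ it agrees with $\Fcal_{\min}$, via $\widetilde B_0\setminus H\cong B\setminus\{o\}$ and Lemma~\ref{lem:minimal-reflexive-hull}.
\item Its further restriction $\widehat{\Ecal'}|_H$ is torsion-free, so it has depth at least two along $H$.
\end{itemize}
Hence it is a reflexive algebraic tangent-cone extension of $\Fcal_{\min}$ whose restriction to $H$ is semistable.

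By uniqueness up to twist of such extensions \cite{ChenSun:20b}, $\widehat{\Ecal'}|_H$ is a twist of the restriction computed from the homogeneous presentation. That restriction is the cokernel of $(Z_1,Z_2,Z_3^2)^T\colon\Ocal\to\Ocal(1)^{\oplus 2}\oplus\Ocal(2)$ on $\PBbb^2$, which is locally free because $Z_1,Z_2,Z_3^2$ have no common zero. This contradicts $\Ocal_{H_\infty}\oplus\Ical_q$.

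This is the paper's argument. It needs neither a classification of families nor semistability on $D$, only the semistability on $H$ that is part of the hypothesis.
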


\begin{proof}
The compatibility condition gives
$$
\left(\operatorname{Gr}^{\mathrm{HNS}}(\Gcal_\infty)\right)^{**}
\cong
\Ocal_{H_\infty}^{\oplus 2},
\qquad
\left(\operatorname{Gr}^{\mathrm{HNS}}(\Gcal_\infty)\right)^{**}
/
\operatorname{Gr}^{\mathrm{HNS}}(\Gcal_\infty)
\cong
\Ocal_q .
$$
In particular,
$$
c_1(\Gcal_\infty)=0,\qquad c_2(\Gcal_\infty)=1 .
$$
Since $\Gcal_\infty$ is semistable on $H_\infty\cong\PBbb^2$, Riemann--Roch
gives $\chi(\Gcal_\infty)=1$, and semistability gives
$H^2(H_\infty,\Gcal_\infty)=0$. Hence
$H^0(H_\infty,\Gcal_\infty)\neq 0$.

Let $s$ be a nonzero section. Since $\Gcal_\infty$ is semistable of slope
zero, the saturation of the subsheaf generated by $s$ is
$\Ocal_{H_\infty}$. Thus we have an exact sequence
$$
0\longrightarrow \Ocal_{H_\infty}
\longrightarrow \Gcal_\infty
\longrightarrow \Ical_Z
\longrightarrow 0
$$
for a zero-dimensional subscheme $Z\subset H_\infty$. The equality
$c_2(\Gcal_\infty)=1$ gives $l(Z)=1$. By the compatibility condition,
$Z$ is the point $q$ corresponding to the line $L\subset\CBbb^3$. Therefore
$$
0\longrightarrow \Ocal_{H_\infty}
\longrightarrow \Gcal_\infty
\longrightarrow \Ical_q
\longrightarrow 0 .
$$
Since
$$
\operatorname{Ext}^1(\Ical_q,\Ocal_{H_\infty})\cong \CBbb,
$$
there are exactly two possibilities: the extension class is zero, giving
$$
\Gcal_\infty\cong \Ocal_{H_\infty}\oplus \Ical_q,
$$
or the extension class is nonzero. In the nonzero case, the extension is
locally free and is the unique strictly semistable rank-two bundle on
$H_\infty$ with $c_1=0$ and $c_2=1$ realizing the above graded object.

It remains to exclude the split case as the boundary restriction of an
algebraic bubble of a family forming the minimal singularity. Suppose, to
the contrary, that such a family $\Ecal'$ exists, and let $\widehat{\Ecal'}$
be an extension of $\Ecal'$ across the exceptional divisor whose algebraic
bubble has boundary restriction
$$
\Ocal_{H_\infty}\oplus \Ical_q .
$$
Restricting $\widehat{\Ecal'}$ to the proper transform of the central fiber
gives a semistable algebraic tangent-cone extension for the minimal
reflexive singularity
$$
0\longrightarrow \Ocal_B
\xrightarrow{
\begin{pmatrix}
z_1\\
z_2\\
z_3^2
\end{pmatrix}}
\Ocal_B^{\oplus 3}
\longrightarrow \Fcal_{\min}
\longrightarrow 0 .
$$
By the computation of the semistable extension for this local model above,
the restriction to the exceptional hyperplane is the non-split extension
$$
0\longrightarrow \Ocal_{H_\infty}
\longrightarrow \Gcal_\infty^{\mathrm{ss}}
\longrightarrow \Ical_q
\longrightarrow 0,
$$
and in particular is locally free. This contradicts the split boundary
restriction
$$
\Ocal_{H_\infty}\oplus \Ical_q,
$$
which is not locally free at $q$. Hence the split case cannot arise from an
algebraic bubble of any family forming the minimal singularity.
\end{proof}

\begin{rmk}
A folklore expectation for the analytic bubble itself is
that the split boundary type should occur at infinity. If this expectation is
correct, then the proposition indicates a genuine distinction between
analytic bubbles and algebraic bubbles arising from algebraic degenerations.
Determining the precise behavior in this case remains an interesting
question.
\end{rmk}
   
\section{Smoothable elementary modifications of projective cones
with explicit algebraic bubbling}\label{GlobalExamples}
In this section, we construct smoothings of finite-colength elementary
modifications of reflexive projective cones on $\PBbb^3$. Starting from a
stable rank-two bundle on $\PBbb^2$, we form the associated reflexive cone
$\Fcal$ on $\PBbb^3$ and construct a flat family whose central fiber
$\Ccal$ is an elementary modification of $\Fcal$ at the vertex, while the general fibers are
locally free and stable. These examples give an explicit global setting in
which the algebraic bubble restricts, along the hyperplane at infinity, to
the link bundle of the analytic tangent cone. They also show that the
algebraic multiplicity defined above and the analytic energy density measure
different features of the degeneration, and need not be comparable in
general.

Let $H_\infty:=\{Z_4=0\}\cong \PBbb^2$, so that
$\PBbb^3=\CBbb^3\cup H_\infty$. Endow $\PBbb^3$ with the
Fubini--Study metric $\omega_{FS}$, and endow $H_\infty$ with the
induced Fubini--Study metric $\underline{\omega}_{FS}$. We write a point
of $\PBbb^3$ as $Z=[Z_1,Z_2,Z_3,Z_4]$, and set
$v=[0,0,0,1]$. Let
$$
\pi:\PBbb^3\setminus\{v\}\longrightarrow H_\infty,\qquad
[Z_1,Z_2,Z_3,Z_4]\longmapsto [Z_1,Z_2,Z_3,0],
$$
be the projection from the vertex. Given a rank-two stable bundle
$\underline{\Ecal}$ on $H_\infty\cong \PBbb^2$ with
$c_1(\underline{\Ecal})=0$, define the associated reflexive cone by
$$
\Fcal:=j_*\pi^*\underline{\Ecal},\qquad
j:\PBbb^3\setminus\{v\}\hookrightarrow \PBbb^3 .
$$
Let $\underline{H}$ be the HYM metric on $\underline{\Ecal}$ with respect
to $\underline{\omega}_{FS}$, and let $H=\pi^*\underline{H}$ be the
induced metric on the locally free locus of $\Fcal$. We first record the
following observation.

\begin{lem}\label{Lemma5.1}
The metric $H$ is an admissible HYM metric on $\Fcal$ with respect to
$\omega_{FS}$. Moreover, on $\CBbb^3\setminus\{v\}$ the same metric is HYM
with respect to the Euclidean metric $\omega_0$.
\end{lem}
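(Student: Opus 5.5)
We need to prove Lemma \ref{Lemma5.1}: $H=\pi^*\underline{H}$ is an admissible HYM metric on $\Fcal$ with respect to $\omega_{FS}$, and on $\CBbb^3\setminus\{v\}$ it is HYM with respect to $\omega_0$.

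The plan is to compute the curvature of the pulled-back Chern connection directly and to check the contraction against both Kähler forms. Write $A=\pi^*\underline{A}$, where $\underline{A}$ is the Chern connection of $(\underline{\Ecal},\underline{H})$. Then $F_A=\pi^*F_{\underline A}$ on $\PBbb^3\setminus\{v\}$. Since $c_1(\underline\Ecal)=0$ and $\underline\Ecal$ is stable, $\underline{H}$ satisfies $\Lambda_{\underline\omega_{FS}}F_{\underline A}=0$. So the HYM equation for $H$ reduces to showing that the contraction of a pulled-back $(1,1)$-form $\pi^*\alpha$ vanishes whenever $\Lambda_{\underline\omega_{FS}}\alpha=0$.

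First treat the Euclidean case on $\CBbb^3\setminus\{0\}$. Here $\pi$ is the radial projection $z\mapsto[z]$, and $\omega_0=\frac{i}{2}\partial\bar\partial|z|^2$ decomposes orthogonally as
$$
\omega_0=|z|^2\pi^*\underline\omega_{FS}+\tfrac{i}{2}\partial|z|\wedge\bar\partial|z|\cdot(\text{radial part}),
$$
up to normalization. The form $\pi^*\alpha$ has no radial or Reeb components, because it is annihilated by the Euler vector field and its conjugate. Hence $\Lambda_{\omega_0}\pi^*\alpha=|z|^{-2}\pi^*(\Lambda_{\underline\omega_{FS}}\alpha)=0$. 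This gives the second claim.

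For $\omega_{FS}$ on $\PBbb^3$, the same argument applies pointwise. At a point $Z\neq v$, the tangent space splits into the fiber direction of $\pi$ (the line through $v$) and its $\omega_{FS}$-orthogonal complement. The form $\pi^*\alpha$ vanishes on the fiber direction. The restriction of $\omega_{FS}$ to the horizontal complement is a positive multiple $\lambda(Z)$ of $\pi^*\underline\omega_{FS}$. This follows from the $U(3)$-symmetry fixing $v$ and $H_\infty$: $U(3)$ acts transitively on $H_\infty$, isotropy-irreducibly, and the metrics in question are invariant. Consequently $\Lambda_{\omega_{FS}}\pi^*\alpha=\lambda(Z)^{-1}\pi^*\Lambda_{\underline\omega_{FS}}\alpha=0$.

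I expect the main obstacle to be justifying the claim that the horizontal restriction is conformal to $\pi^*\underline\omega_{FS}$. The risk is that $\omega_{FS}$ is not a Riemannian-submersion-type metric for the projection from the vertex, so I would verify it explicitly in the affine chart $Z_4=1$. There $\omega_{FS}=\frac{i}{2}\partial\bar\partial\log(1+|z|^2)$. Its orthogonal decomposition into the radial direction and $\{z\}^\perp$ gives coefficients $\frac{1}{(1+|z|^2)^2}$ and $\frac{1}{1+|z|^2}$, respectively. On $\{z\}^\perp$ the form is therefore $\frac{|z|^2}{1+|z|^2}\pi^*\underline\omega_{FS}$. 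Points of $H_\infty$ are handled by symmetry or by another chart.

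Admissibility remains. We need $H$ smooth off $v$, which holds because $\Fcal$ is locally free there, and $\int|F_A|^2\,\dvol<\infty$. By homogeneity near $v$, $|F_A|^2\sim|z|^{-4}$, which is integrable in real dimension $6$. Finally, $\Lambda F_A=0$ holds off $v$, and $\Fcal=j_*\pi^*\underline\Ecal$ is reflexive, so the extension is the one given by Bando–Siu. This establishes admissibility.
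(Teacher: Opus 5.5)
Your proposal is correct and is essentially the paper's argument. The paper likewise writes $\omega_0=r^2\alpha+\eta$ and $\omega_{FS}=\frac{r^2}{1+r^2}\alpha+\frac{1}{(1+r^2)^2}\eta$ with $\alpha=\pi^*\underline{\omega}_{FS}$, uses horizontality of $F_H=\pi^*F_{\underline{H}}$ (phrased via $F_H\wedge\omega^2$ rather than $\Lambda$), and obtains $L^2$ curvature from $|F_H|\leq Cr^{-2}$ near $v$. For points of $H_\infty$ the paper simply extends the equation by continuity from the dense affine chart; that is cleaner than appealing to symmetry, since the $U(3)$ action does not move points of $H_\infty$ into the chart.
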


\begin{proof}
Since $\underline{H}$ is HYM and $c_1(\underline{\Ecal})=0$, we have
$$
\sqrt{-1}F_{\underline{H}}\wedge \underline{\omega}_{FS}=0.
$$
On $\CBbb^3\setminus\{v\}$, write
$$
\alpha:=\pi^*\underline{\omega}_{FS},
\qquad
\eta:=2r\,dr\wedge\theta ,
$$
where $\theta$ is the standard contact form. Then
$$
\omega_0=r^2\alpha+\eta
$$
and
$$
\omega_{FS}
=
\frac{r^2}{1+r^2}\alpha
+
\frac{1}{(1+r^2)^2}\eta .
$$
The curvature of $H$ is $F_H=\pi^*F_{\underline{H}}$. It is horizontal, so
$F_H\wedge\alpha^2=0$, while $\eta^2=0$. Hence
$$
\sqrt{-1}F_H\wedge\omega_0^2
=
2r^2\,\sqrt{-1}F_H\wedge\alpha\wedge\eta
$$
and
$$
\sqrt{-1}F_H\wedge\omega_{FS}^2
=
\frac{2r^2}{(1+r^2)^3}
\sqrt{-1}F_H\wedge\alpha\wedge\eta .
$$
But
$$
\sqrt{-1}F_H\wedge\alpha
=
\sqrt{-1}\pi^*
\bigl(F_{\underline{H}}\wedge\underline{\omega}_{FS}\bigr)
=
0.
$$
Thus $H$ is HYM with respect to $\omega_0$ on
$\CBbb^3\setminus\{v\}$, and it is HYM with respect to $\omega_{FS}$ on
$\CBbb^3\setminus\{v\}$. The latter equation extends across
$H_\infty$ by smoothness, hence holds on $\PBbb^3\setminus\{v\}$.

It remains only to check admissibility near $v$. Near $v$, the metrics
$\omega_{FS}$ and $\omega_0$ are uniformly equivalent, and since
$F_H=\pi^*F_{\underline{H}}$ is horizontal, we have
$$
|F_H|_{\omega_0}\leq C r^{-2}.
$$
Therefore
$$
\int_{B_\epsilon(v)\setminus\{v\}}
|F_H|_{\omega_{FS}}^2\,dV_{\omega_{FS}}
\leq
C\int_0^\epsilon r^{-4}r^5\,dr
<\infty .
$$
\end{proof}
On the algebraic side, we have the following.

\begin{prop}\label{smoothProjectiveCones}
Suppose $c_1(\underline{\Ecal})=0$. Then there exists a family $\Ecal$ over
$\PBbb^3\times \CBbb$ such that
\begin{enumerate}
\item the central fiber $\Ccal$ is a torsion free sheaf with a point singularity at the vertex,
and $\Ecal_t$ is locally free and stable for every $t\neq 0$. Furthermore,
$\Ccal^{**}\cong \Fcal$, and $\Fcal/\Ccal$ is nonzero and supported at the
vertex. In addition,
$$
2l(\Fcal/\Ccal)= c_3(\Ccal^{**})
= l(\Ecal xt^1(\Ccal,\Ocal_B)).
$$

\item There exists an extension $\widehat{\Ecal}$ of $\Ecal$ at the vertex such that the
algebraic bubble $\Gcal$ is a rank-two stable bundle over $\PBbb^3$ and
satisfies
$$
\Gcal|_{H_\infty}\cong \underline{\Ecal}.
$$
Moreover, up to tensoring with powers of the exceptional divisor, any other extensions with semi-stable algebraic bubbles is isomorphic to $\widehat{\Ecal}$.
\end{enumerate}
\end{prop}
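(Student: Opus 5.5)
Our plan follows the pattern of Section~\ref{DeformingSingularities}, now in graded form. First we present the cone. Since $\underline{\Ecal}$ is a bundle on $H_\infty\cong\PBbb^2$, it has a graded free resolution over $S=\CBbb[Z_1,Z_2,Z_3]$ (Horrocks/Beilinson; concretely, $0\to\bigoplus\Ocal(-b_j)\xrightarrow{\phi}\bigoplus\Ocal(-a_i)\to\underline{\Ecal}\to 0$ fails in general, so we take the minimal graded resolution of the module $\Gamma_*\underline{\Ecal}$). Viewing the homogeneous entries as polynomials in $Z_1,Z_2,Z_3$ on $\PBbb^3$, and twisting appropriately, gives a resolution of $\Fcal=j_*\pi^*\underline{\Ecal}$. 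The local cohomology module $H^1_*(\underline{\Ecal})$ is the finite-length module at the vertex. Indeed, $\Ecal xt^1(\Fcal,\Ocal)$ at $v$ is the graded dual of $\bigoplus_k H^1(\underline{\Ecal}(k))$, by local duality on the affine cone $\CBbb^3$. Its length equals $c_3(\Fcal)$ by Hartshorne's formula for reflexive sheaves.

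Next we choose the elementary modification. Guided by Theorem~\ref{thm:bubbling-rigidity}, we want $\Ccal\subset\Fcal$ with $l(\Fcal/\Ccal)=\tfrac12 c_3$. For the smoothing, we do not construct $\Ccal$ first. Instead we build the total family directly as a cone over a bundle on $\PBbb^3$. Extend $\underline{\Ecal}$ to a stable rank-two bundle $\Gcal$ on $\PBbb^3$ (coordinates $Z_1,\dots,Z_4$) with $\Gcal|_{H}\cong\underline{\Ecal}$ for a hyperplane $H$; such extensions exist, e.g. via Horrocks' or the Barth--Hulek construction of bundles restricting to prescribed stable bundles (this existence is the step I expect to be the main obstacle; I would first try the Serre correspondence, taking a curve in $\PBbb^3$ meeting $H$ in the zero-set of a section of $\underline{\Ecal}(k)$). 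Then set $\Ecal$ on $\PBbb^3\times\CBbb$, viewed inside the cone over $\PBbb^3$ with coordinates $(Z_1,Z_2,Z_3,t)$ plus $Z_4$. Concretely, take the cone $j_*\pi_4^*\Gcal$ on $\PBbb^4$ and restrict it to the pencil of hyperplanes $\{t Z_4'=\dots\}$. Equivalently, substitute $t$ for the fourth homogeneous variable of the resolution of $\Gcal$, keeping $Z_4$ as the affine coordinate. For $t\neq0$ the hyperplane misses the cone vertex, so $\Ecal_t\cong\Gcal$-type bundle is locally free. Stability follows as in Proposition~\ref{prop:global-minimal-family}, via $H^0(\Ecal_t(-1))=0$ from the resolution and the vanishing of intermediate cohomology of line bundles. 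At $t=0$ the hyperplane passes through the vertex, and the fiber is torsion-free with isolated singularity at $v$, with reflexive hull $j_*\pi^*(\Gcal|_H)=\Fcal$.

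With the family in hand, part~(1) follows. Lemma~\ref{ReflexiveProperty} and Theorem~\ref{thm:bubbling-rigidity} give $2l(\Fcal/\Ccal)=l(\Ecal xt^1(\Ccal,\Ocal_B))$ and $\Fcal/\Ccal\neq0$. Lemma~\ref{Lemma-Defect only depends on reflexive hull} together with Hartshorne's local formula identifies this with $c_3(\Ccal^{**})$, since $\Fcal$ has only one singular point.

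For part~(2), the family is homogeneous (a cone over $\Gcal$) near $(v,0)$. So, as in the lemma for the minimal model, we blow up $(v,0)$ and cancel the common exceptional factor in the pulled-back presentation. This produces a reflexive extension $\widehat{\Ecal}$ whose restriction to $D\cong\PBbb^3$ is the link bundle, namely $\Gcal$ itself, which is stable. Its restriction to $H_\infty=D\cap\widetilde{B_0}$ is $\Gcal|_H\cong\underline{\Ecal}$. Uniqueness up to $\Ocal(kD)$ among extensions with semistable bubble follows from the uniqueness of optimal extensions in \cite{ChenSun:20b}, exactly as before.
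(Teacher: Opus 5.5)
\textbf{Verdict.} Your overall route is the paper's, but the existence of the bundle $\Gcal$, on which everything else rests, is left unproved; that is a genuine gap.

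\textbf{What matches the paper.} Restricting the cone $j_*\pi_4^*\Gcal$ on $\PBbb^4$ to the pencil of hyperplanes gives the same family as the paper's $p_*\widehat{\Phi}^*\Gcal$, where $\Phi_t[Z]=[Z_1,Z_2,Z_3,tZ_4]$. Globally the substitution must be $W_4\mapsto tZ_4$, not $t$. The map $([Z],t)\mapsto[Z_1,Z_2,Z_3,tZ_4,Z_4]$ is a local isomorphism from a neighbourhood of $(v,0)$ onto a neighbourhood of the vertex. This makes reflexivity of $\Ecal$ and the identification of the bubble with $\Gcal$ transparent. Uniqueness is the same appeal to \cite{ChenSun:20b}.

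\textbf{The gap: existence of $\Gcal$.} You need a stable rank-two bundle $\Gcal$ on $\PBbb^3$ with $\Gcal|_{H}\cong\underline{\Ecal}$. You leave this open, yet both the family and the stability of the bubble in part (2) depend on it. The routes you suggest are not arguments.
\begin{itemize}
\item A Serre-correspondence construction would need a subcanonical curve $Y\subset\PBbb^3$ with $Y\cap H$ equal to the zero scheme of a section of $\underline{\Ecal}(k)$. It would also need a generating section of $\omega_Y(4-2k)$ that restricts to the extension class defining $\underline{\Ecal}(k)$. That is a nontrivial lifting problem.
\item A monad description of $\underline{\Ecal}$ on $\PBbb^2$ does not by itself extend to $\PBbb^3$.
\end{itemize}
The paper closes this step as follows. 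By Grauert--M\"ulich, $\underline{\Ecal}$ is trivial on a general line $l\subset H_\infty$. After fixing a trivialization along $l$, Donaldson's ADHM correspondence \cite{Donaldson:84} produces an instanton bundle on $\PBbb^3$ whose restriction to $H_\infty$ is $\underline{\Ecal}$. Such a bundle is stable with $c_1=0$.

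\textbf{Stability of $\Ecal_t$.} Your argument is misapplied. With $c_1=0$ the criterion is $H^0(\Ecal_t)=0$; $H^0(\Ecal_t(-1))=0$ gives only semistability. Also, a general $\Gcal$ has no resolution from which this vanishing can be read off without circularity. None of this is needed: $\Ecal_t\cong\Phi_t^*\Gcal$ with $\Phi_t$ an automorphism, so $\Ecal_t$ is stable because $\Gcal$ is.

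\textbf{The identity in part (1).} You use the local Theorem~\ref{thm:bubbling-rigidity} plus Hartshorne's formula. The paper instead uses $c_3(\Ccal)=c_3(\Ecal_t)=0$ by flatness, together with additivity of $\operatorname{ch}_3$. Both routes work. Yours requires first checking that $\Fcal$ is not locally free at $v$. This holds by Horrocks' criterion: a stable rank-two bundle with $c_1=0$ on $\PBbb^2$ does not split, so $H^1_*(\underline{\Ecal})\neq 0$.

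\textbf{An incorrect aside.} You say $\underline{\Ecal}$ need not have a two-term resolution by sums of line bundles. On $\PBbb^2$ it always does, since $\Gamma_*\underline{\Ecal}$ has depth two and hence projective dimension one. That paragraph is not used in your argument.
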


The proof has two inputs. First, Donaldson's interpretation of the ADHM
construction gives a stable rank-two bundle on $\PBbb^3$ whose restriction
to $H_\infty\cong\PBbb^2$ is the prescribed bundle
$\underline{\Ecal}$. Second, we use an explicit resolution of a natural
rational map on $\PBbb^3\times\CBbb$ to produce the desired degeneration.
We begin with the extension step.

\begin{lem}
There exists a stable rank-two bundle $\Gcal$ on $\PBbb^3$ such that
$$
\Gcal|_{H_\infty}\cong \underline{\Ecal}.
$$
\end{lem}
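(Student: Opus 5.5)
The plan is to use Donaldson's monad description of framed bundles. He identifies framed instantons on $\CBbb^2$ (ADHM data) with holomorphic bundles on $\PBbb^2$ trivialized on a line. Here the roles shift up by one dimension: we must realize the stable bundle $\underline{\Ecal}$ on $H_\infty\cong\PBbb^2$ as the cohomology of a monad, and then extend that monad to $\PBbb^3$.

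\textbf{Step 1 (monad on the plane).} Write $k=c_2(\underline{\Ecal})$. Since $\underline{\Ecal}$ is stable with $c_1=0$, we have $H^0(\underline{\Ecal})=0$, and by Serre duality together with $\underline{\Ecal}^*\cong\underline{\Ecal}$ also $H^2(\underline{\Ecal}(-1))=H^0(\underline{\Ecal}(-2))^*=0$. The Beilinson spectral sequence (as in Barth--Hulek) then presents $\underline{\Ecal}$ as the cohomology of a monad
$$
\Ocal_{\PBbb^2}(-1)^{k}\xrightarrow{\ a\ }\Ocal_{\PBbb^2}^{2k+2}\xrightarrow{\ b\ }\Ocal_{\PBbb^2}(1)^{k}.
$$
Here $a$ is injective as a bundle map, $b$ is surjective, and $b\circ a=0$. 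Explicitly $a=A_1Z_1+A_2Z_2+A_3Z_3$ and $b=B_1Z_1+B_2Z_2+B_3Z_3$ with constant matrices $A_i,B_i$.

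\textbf{Step 2 (extension).} Define the same monad on $\PBbb^3$ with linear maps $\tilde a=a+A_4Z_4$ and $\tilde b=b+B_4Z_4$. The matrices $A_4,B_4$ must be chosen so that three conditions hold: $\tilde b\tilde a=0$, $\tilde a$ is injective at every point of $\PBbb^3$, and $\tilde b$ is surjective at every point of $\PBbb^3$. The simplest choice is $A_4=B_4=0$. Then $\tilde b\tilde a=0$ automatically, but $\tilde a$ and $\tilde b$ degenerate at the vertex $v$. That choice only reproduces the cone $\Fcal$, which is consistent with this paper's construction but is not a bundle. So we need generic $A_4,B_4$ satisfying the quadratic constraints $B_4A_i+B_iA_4=0$ for $i\le 3$ and $B_4A_4=0$.

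A cleaner alternative is to take a generic linear projection. Regard $H_\infty$ as a plane and choose the monad to be the restriction of a general \emph{instanton-type} monad on $\PBbb^3$. Concretely, the space of monads on $\PBbb^3$ restricting to a given monad on $H_\infty$ is the fiber of the restriction map. Stable rank-two bundles on $\PBbb^2$ with $c_1=0$ and $c_2=k$ form an irreducible moduli space (Barth). The restriction map from mathematical instantons of charge $k$ on $\PBbb^3$ to sheaves on a fixed plane is dominant onto the stable locus: a general instanton restricts stably to a general plane (Barth's restriction theorem), and $GL_4$ acts transitively on planes. Dominance alone only gives a general $\underline{\Ecal}$, however. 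To reach \emph{every} stable $\underline{\Ecal}$, I would argue directly with Donaldson's ADHM description. The stable $\underline{\Ecal}$ corresponds, after choosing a line $\ell\subset H_\infty$ on which it is trivial (such a line exists by Grauert--Mülich), to ADHM data $(\alpha_1,\alpha_2,i,j)$. The standard instanton monad on $\PBbb^3$ built from the same data with the extra coordinate entering symmetrically (Atiyah's twistor-type extension) then restricts to the given monad.

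\textbf{Step 3 (properties).} Given a monad on $\PBbb^3$ with $\tilde a$ injective and $\tilde b$ surjective everywhere, its cohomology $\Gcal$ is locally free of rank two with $c_1=0$. Restriction commutes with taking monad cohomology, so $\Gcal|_{H_\infty}\cong\underline{\Ecal}$. For stability it suffices to show $H^0(\Gcal)=0$. Suppose $s\in H^0(\Gcal)$. Its restriction to $H_\infty$ vanishes because $H^0(\underline{\Ecal})=0$, so $s$ lies in $H^0(\Gcal(-1))$. The exact sequence $0\to\Gcal(-1)\to\Gcal\to\underline{\Ecal}\to 0$ together with the monad display gives $H^0(\Gcal(-1))=0$, since the kernel bundle $\ker\tilde b(-1)$ has no sections.

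\textbf{Main obstacle.} The hardest step is Step 2: showing that for an \emph{arbitrary} stable $\underline{\Ecal}$, the quadratic ADHM-type constraints admit a solution $(A_4,B_4)$ with the nondegeneracy conditions holding at the points $Z_1=Z_2=Z_3=0$. My first attempt would be to rescale, $\tilde a=a+\lambda A_4Z_4$, and verify nondegeneracy near $v$ using the ADHM nondegeneracy of the framing.
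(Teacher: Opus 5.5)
You correctly settle, in Step 2, on the same route as the paper: use Grauert--M\"ulich to get a line $\ell\subset H_\infty$ on which $\underline{\Ecal}$ is trivial, then apply Donaldson's ADHM description. Steps 1 and 3 are also fine. But Step 2 is the entire content of the lemma, and you do not carry it out. You leave the existence of $(A_4,B_4)$ as the ``main obstacle.'' Your claim that the instanton monad on $\PBbb^3$ is ``built from the same data'' is not correct as stated. A framed bundle on $\PBbb^2$ only yields data $(\alpha_1,\alpha_2,i,j)$ satisfying the complex equation $\mu_{\CBbb}:=[\alpha_1,\alpha_2]+ij=0$ plus a nondegeneracy condition, and that does not close the monad on $\PBbb^3$. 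Your fallback of rescaling $\tilde a=a+\lambda A_4Z_4$ gains nothing. The substitution $Z_4\mapsto\lambda Z_4$ identifies the monads for all $\lambda\neq0$, and the constraints $B_4a+bA_4=0$, $B_4A_4=0$ do not involve $\lambda$.

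Concretely, take $\ell=\{Z_3=Z_4=0\}$. Donaldson's framed monad on $H_\infty$, which you need in place of Barth's, is
$$
a=\begin{pmatrix}Z_3\alpha_1+Z_1\\ Z_3\alpha_2+Z_2\\ Z_3 j\end{pmatrix},\qquad
b=\bigl(-(Z_3\alpha_2+Z_2),\ Z_3\alpha_1+Z_1,\ Z_3 i\bigr),\qquad
ba=Z_3^2\,\mu_{\CBbb}.
$$
The twistor extension is $A_4=(\alpha_2^*,-\alpha_1^*,-i^*)^T$ and $B_4=(\alpha_1^*,\alpha_2^*,j^*)$, for which
$$
\tilde b\tilde a=Z_3^2\,\mu_{\CBbb}-Z_3Z_4\,\mu_{\RBbb}-Z_4^2\,\mu_{\CBbb}^*,\qquad
\mu_{\RBbb}:=[\alpha_1,\alpha_1^*]+[\alpha_2,\alpha_2^*]+ii^*-j^*j .
$$
So your quadratic constraints amount to the real moment map equation $\mu_{\RBbb}=0$. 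The data attached to $\underline{\Ecal}$ in an arbitrary basis need not satisfy it.

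The missing idea is Donaldson's theorem, a Kempf--Ness statement for the $GL_k(\CBbb)$-action. Every $GL_k(\CBbb)$-orbit of nondegenerate data with $\mu_{\CBbb}=0$ (equivalently, every framed bundle on $\PBbb^2$ trivial on $\ell$) contains a solution of $\mu_{\RBbb}=0$, unique up to $U(k)$. For that representative the data are genuine ADHM data of a framed instanton on $S^4$. Consequently:
\begin{itemize}
\item the extended monad is nondegenerate at every point of $\PBbb^3$, including $v$;
\item its cohomology is a mathematical instanton bundle, hence stable with $c_1=0$, which makes your Step 3 automatic;
\item it restricts to $\underline{\Ecal}$ on $H_\infty$.
\end{itemize}
This is exactly the input that the paper's short proof takes from Donaldson's ADHM theorem.
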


\begin{proof}
By the Grauert--Mülich theorem (\cite{OSS11}), since $\underline{\Ecal}$ is stable of
rank two on $H_\infty\cong \PBbb^2$ and
$c_1(\underline{\Ecal})=0$, its restriction to a general line
$l\subset H_\infty$ is trivial:
$$
\underline{\Ecal}|_{l}\cong \Ocal_{l}^{\oplus 2}.
$$
Choosing such a line and a trivialization of
$\underline{\Ecal}|_{l}$, Donaldson's ADHM construction
\cite{Donaldson:84} gives an instanton bundle $\Gcal$ on $\PBbb^3$
whose restriction to $H_\infty$ is isomorphic to
$\underline{\Ecal}$. In particular, $\Gcal$ is a rank-two stable bundle
with $c_1(\Gcal)=0$ and
$$
H^1(\PBbb^3,\Gcal(-2))=0 .
$$
\end{proof}
We now construct the birational model underlying the degeneration. For
$t\in\Delta^*$, let
$$
\Phi_t:\PBbb^3\longrightarrow \PBbb^3,\qquad
[Z_1,Z_2,Z_3,Z_4]\longmapsto [Z_1,Z_2,Z_3,tZ_4].
$$
This is a one-parameter family of automorphisms fixing $H_\infty$
pointwise. As $t\to 0$, it collapses each line through
$v=[0,0,0,1]$ to its intersection with $H_\infty$; hence the limiting map
is the projection from $v$, with indeterminacy only at $v$. Equivalently,
we consider the rational map
$$
\Phi:\PBbb^3\times\Delta\dashedrightarrow\PBbb^3,\qquad
([Z_1,Z_2,Z_3,Z_4],t)\longmapsto [Z_1,Z_2,Z_3,tZ_4].
$$
Its graph closure is the blow-up of $\PBbb^3\times\Delta$ at $(v,0)$.
We denote it by
$$
p:\widehat{\PBbb^3\times\Delta}\longrightarrow \PBbb^3\times\Delta .
$$
In coordinates, it is the subvariety of
$\PBbb^3\times\Delta\times\PBbb^3$ given by
$$
\widehat{\PBbb^3\times\Delta}
=
\left\{
([Z],t,[L]):
\operatorname{rank}
\begin{pmatrix}
L_1&L_2&L_3&T\\
Z_1&Z_2&Z_3&tZ_4
\end{pmatrix}
\leq 1
\right\},
$$
where $[L]=[L_1,L_2,L_3,T]$, and $p$ is the projection to the first two
factors. The central fiber of $p$ has two components,
$$
p^{-1}(0)=\operatorname{Bl}_v\PBbb^3\cup_E \PBbb^3_{\mathrm{exc}},
\qquad E\cong \PBbb^2.
$$
The resolved map restricts to the projection
$\operatorname{Bl}_v\PBbb^3\to H_\infty$ on the strict transform, and to an
isomorphism $\PBbb^3_{\mathrm{exc}}\to\PBbb^3$ on the exceptional
component.

Let
$$
\widehat{\Phi}:\widehat{\PBbb^3\times\Delta}\longrightarrow \PBbb^3
$$
be the projection to the last factor in the graph-closure description.
Then $\widehat{\Phi}$ resolves $\Phi$, that is,
$\widehat{\Phi}=\Phi\circ p$ over the locus where $\Phi$ is defined.
On the central fiber, $\widehat{\Phi}$ restricts to the projection
$$
\operatorname{Bl}_v\PBbb^3\longrightarrow H_\infty
$$
on the strict transform, and to an isomorphism
$$
\PBbb^3_{\mathrm{exc}}\xrightarrow{\ \sim\ }\PBbb^3
$$
on the exceptional component. This allows to identify $\PBbb^3_{\mathrm{exc}}$ with $\PBbb^3$ below. 

Now let $\widehat{\Ecal}=\widehat{\Phi}^*\Gcal$ and
$\Ecal=p_*\widehat{\Ecal}$. As in \cite[Lemma $2.7$]{Chen2025}, we first note the following.

\begin{lem}\label{Lemma-5.4}
The sheaf $\Ecal=p_*\widehat{\Ecal}$ is reflexive on
$\PBbb^3\times\Delta$ with an isolated point singularity at $(v,0)$. In particular, $\Ecal$ is flat over $\Delta$.
\end{lem}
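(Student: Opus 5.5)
The plan is to prove reflexivity by a pole-order argument along the exceptional divisor, the same mechanism as in \cite[Lemma $2.7$]{Chen2025}. Write $D\cong\PBbb^3_{\mathrm{exc}}$ for the exceptional divisor of $p$, with $p(D)=(v,0)$. Since $\widehat{\Ecal}=\widehat{\Phi}^*\Gcal$ is the pullback of a locally free sheaf, it is locally free on $\widehat{\PBbb^3\times\Delta}$. The map $p$ is an isomorphism off $D$, so $\Ecal=p_*\widehat{\Ecal}$ is locally free away from $(v,0)$, and that point is its only possible singularity.

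Next I would show $\Ecal$ is torsion-free. A section of $p_*\widehat{\Ecal}$ over an open $U$ is a section of the locally free sheaf $\widehat{\Ecal}$ over $p^{-1}(U)$. Such a section cannot be killed by a nonzero function, because $p^{-1}(U)$ is integral.

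For reflexivity, set $U^\circ=U\setminus\{(v,0)\}$. On the smooth fourfold, $\Ecal^{**}=j_*(\Ecal|_{U^\circ})$, so it suffices to show that every section $s$ of $\widehat{\Ecal}$ over $p^{-1}(U)\setminus D$ extends holomorphically across $D$. First, $s$ is meromorphic along $D$: by Hartogs (codimension four) its image in $\Ecal^{**}$ extends over $U$, and $\Ecal^{**}$ and $\Ecal$ agree off a point. Hence $(\Ecal^{**}/\Ecal)$ is killed by a power of the maximal ideal, so $f s\in\Gamma(\widehat{\Ecal})$ for $f\in\mathfrak m^N$. Pulling back, $s$ has at most a pole of finite order $k$ along $D$.

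Suppose $k\ge1$ is minimal. Then $s$ is a holomorphic section of $\widehat{\Ecal}(kD)$ whose restriction to $D$ is nonzero. That restriction is a nonzero section of
$$
\widehat{\Ecal}|_D\otimes\Ocal_D(kD)\cong \Gcal\otimes\Ocal_{\PBbb^3}(-k),
$$
using $\Ocal_D(D)\cong\Ocal_{\PBbb^3}(-1)$ and $\widehat{\Phi}|_D\colon D\xrightarrow{\sim}\PBbb^3$. But $\Gcal$ is stable with $c_1=0$, so $H^0(\PBbb^3,\Gcal(-k))=0$ for $k\ge1$. This contradiction gives $k\le 0$, so $s$ extends and $\Ecal=j_*(\Ecal|_{U^\circ})$ is reflexive.

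Flatness over $\Delta$ then follows as in Lemma~\ref{ReflexiveProperty}: a reflexive sheaf has no $t$-torsion, and over the smooth curve $\Delta$ this means flat.

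The step I expect to need the most care is the identification of the normal bundle and of $\widehat{\Ecal}|_D$ with $\Gcal$. This requires checking from the graph-closure equations that $\widehat{\Phi}$ restricted to $\PBbb^3_{\mathrm{exc}}$ is the isomorphism compatible with the blow-up identification of $D$ with $\PBbb(T_{(v,0)})$. The finite-order-pole claim is routine but must be stated.
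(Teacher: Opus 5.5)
Your argument is correct and is essentially the paper's approach. The paper only invokes the argument of \cite[Lemma 2.7]{Chen2025}, and your pole-order proof spells out that mechanism: finite pole order via coherence of $p_*\widehat{\Ecal}$ and Hartogs for $\Ecal^{**}$, then $H^0(D,\widehat{\Ecal}|_D\otimes\Ocal_D(kD))\cong H^0(\PBbb^3,\Gcal(-k))=0$ for $k\geq 1$ by stability of $\Gcal$, followed by the same ``reflexive, hence no $t$-torsion, hence flat'' step. The identification you worry about is automatic, since in the graph-closure coordinates $\widehat{\Phi}|_D$ is the identity on $[L_1:L_2:L_3:T]$, i.e.\ the standard blow-up identification $D=\PBbb(T_{(v,0)})$ with $\Ocal_D(-D)=\Ocal(1)$; and if you also want $(v,0)$ to be a genuine singularity (which neither you nor the paper checks here), local freeness of $\Ecal$ there would make $p^*\Ecal\to\widehat{\Ecal}$ injective with determinant vanishing to some order $m$ along $D$, and since both determinants are trivial on $D$, $\Ocal_D(mD)\cong\Ocal_D(-m)$ is trivial, so $m=0$ and $\Gcal\cong\widehat{\Ecal}|_D\cong\Ocal^{\oplus 2}$, contradicting stability.
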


\begin{proof}
The morphism $p$ is the blow-up of the smooth fourfold
$\PBbb^3\times\Delta$ at the point $(v,0)$, and $\widehat{\Ecal}$ is
locally free. The same argument as in \cite[Lemma $2.7$]{Chen2025} shows
that $p_*\widehat{\Ecal}$ is reflexive. Since the base is the smooth curve
$\Delta$, reflexivity also implies flatness over $\Delta$.
\end{proof}

For $t\neq 0$, the morphism $p$ is an isomorphism over
$\PBbb^3\times\{t\}$. Hence
$$
\Ecal_t\cong \Phi_t^*\Gcal .
$$
In particular, $\Ecal_t$ is locally free and stable for every $t\neq 0$.

\begin{lem}
$\left.\widehat{\Ecal}\right|_{\PBbb^3_{\mathrm{exc}}}\cong \Gcal.$
\end{lem}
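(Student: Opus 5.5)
The statement should follow directly from functoriality of pullback, once we know what $\widehat{\Phi}$ does on the exceptional component. We have $\widehat{\Ecal}=\widehat{\Phi}^*\Gcal$. Let $i_{\mathrm{exc}}:\PBbb^3_{\mathrm{exc}}\hookrightarrow \widehat{\PBbb^3\times\Delta}$ denote the inclusion. Then
$$
\left.\widehat{\Ecal}\right|_{\PBbb^3_{\mathrm{exc}}}
= i_{\mathrm{exc}}^*\widehat{\Phi}^*\Gcal
\cong (\widehat{\Phi}\circ i_{\mathrm{exc}})^*\Gcal .
$$
The restriction of a pullback is the pullback along the composite. There are no higher $\Tcal or$ issues here, because $\Gcal$ is locally free and so $\widehat{\Phi}^*\Gcal$ is locally free. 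It therefore suffices to show that $\widehat{\Phi}\circ i_{\mathrm{exc}}$ is an isomorphism $\PBbb^3_{\mathrm{exc}}\to\PBbb^3$. Under this isomorphism, used as the identification fixed in the text, the pullback is $\Gcal$ itself.

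The one real step is to check this isomorphism claim, which the text asserts, directly from the graph-closure equations. Over the point $([Z],t)=(v,0)$ with $v=[0,0,0,1]$, the second row of the matrix
$$
\begin{pmatrix}
L_1&L_2&L_3&T\\
Z_1&Z_2&Z_3&tZ_4
\end{pmatrix}
$$
vanishes identically, because $Z_1=Z_2=Z_3=0$ and $t=0$. The rank condition is then vacuous, so the fiber of $p$ over $(v,0)$ is the whole $\PBbb^3$ with coordinates $[L]$. This fiber is $\PBbb^3_{\mathrm{exc}}$, the exceptional divisor of the blow-up of the fourfold at a point. On it, $\widehat{\Phi}$ is the projection $([Z],t,[L])\mapsto [L]$, which is tautologically the identity $\PBbb^3_{\mathrm{exc}}=\PBbb^3_{[L]}\to\PBbb^3$.

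The only point needing care is to confirm that the graph closure is really the blow-up, so that the exceptional component is reduced and equals this full fiber with its reduced structure. In the affine chart $Z_4=1$ near $v$, the graph closure is cut out by $\operatorname{rank}\begin{pmatrix}L\\ (z,t)\end{pmatrix}\le 1$. This is the standard description of $\operatorname{Bl}_0\CBbb^4$ in the coordinates $(z_1,z_2,z_3,t)$, whose exceptional divisor is the reduced $\PBbb^3$. Hence the scheme-theoretic restriction agrees with the one computed above, and the isomorphism $\left.\widehat{\Ecal}\right|_{\PBbb^3_{\mathrm{exc}}}\cong\Gcal$ follows. I do not expect any genuine obstacle.
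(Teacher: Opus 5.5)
Your proposal is correct and follows the paper's own argument: restrict $\widehat{\Phi}^*\Gcal$ to $\PBbb^3_{\mathrm{exc}}$, on which $\widehat{\Phi}$ is an isomorphism onto $\PBbb^3$. Your check that the rank-$\leq 1$ locus is the blow-up, so that $\PBbb^3_{\mathrm{exc}}$ is the reduced fiber over $(v,0)$, supplies what the paper only asserts; the paper also cites Lemma~\ref{Lemma-5.4} to note that $\widehat{\Ecal}$ is an extension of $\Ecal$, which is needed only to interpret this restriction as the algebraic bubble, not for the isomorphism itself.
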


\begin{proof}
By Lemma \ref{Lemma-5.4}, $\Ecal=p_*\widehat{\Ecal}$ is reflexive, so
$\widehat{\Ecal}$ is an extension of $\Ecal$ at $(v,0)$. On the exceptional component, the resolved map
$\widehat{\Phi}$ restricts to an isomorphism
$$
\PBbb^3_{\mathrm{exc}}\xrightarrow{\ \sim\ }\PBbb^3 .
$$
Therefore
$$
\left.\widehat{\Ecal}\right|_{\PBbb^3_{\mathrm{exc}}}
=
\left.(\widehat{\Phi}^*\Gcal)\right|_{\PBbb^3_{\mathrm{exc}}}
\cong \Gcal .
$$
\end{proof}

\begin{lem}
The central fiber $\Ccal$ is torsion free with an isolated point singularity at the vertex and satisfies $\Ccal^{**}\cong \Fcal.$
Moreover, $\Ccal^{**}/\Ccal$ has finite length and is supported at the vertex.
\end{lem}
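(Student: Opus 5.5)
Since $\Ecal=p_*\widehat{\Ecal}$ is reflexive with an isolated singular point at $(v,0)$ (Lemma~\ref{Lemma-5.4}), the argument of Lemma~\ref{ReflexiveProperty} applies verbatim. It shows that $\Ecal$ has no $t$-torsion and that its central fiber $\Ccal=\Ecal\otimes\Ocal_{\PBbb^3}$ has no finite-length subsheaf supported at $v$. Away from $(v,0)$, the map $p$ is an isomorphism, so $\Ecal$ is locally free there. Hence $\Ccal$ is locally free on $\PBbb^3\setminus\{v\}$, and in particular torsion-free: any torsion would be supported at $v$, which we have just excluded. The plan is therefore to identify $\Ccal$ away from $v$ and then pass to reflexive hulls.

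\textbf{Identification off the vertex.} Over a neighbourhood of $(\PBbb^3\setminus\{v\})\times\{0\}$, $p$ is an isomorphism, and the central fiber of $\widehat{\PBbb^3\times\Delta}$ there is the strict transform $\operatorname{Bl}_v\PBbb^3$ minus the exceptional divisor $E$, i.e.\ $\PBbb^3\setminus\{v\}$. On this set $\widehat{\Phi}$ restricts to the projection $\pi$, as recorded in the graph-closure description. Hence
$$
\Ccal|_{\PBbb^3\setminus\{v\}}\cong (\widehat{\Phi}^*\Gcal)|_{\PBbb^3\setminus\{v\}}\cong \pi^*(\Gcal|_{H_\infty})\cong \pi^*\underline{\Ecal}.
$$
Since $\Ccal$ is torsion-free, the natural map $\Ccal\to j_*j^*\Ccal$ is injective. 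Because $\{v\}$ has codimension three, $j_*j^*\Ccal$ is the reflexive hull $\Ccal^{**}$. Combined with the identification above, this gives $\Ccal^{**}\cong j_*\pi^*\underline{\Ecal}=\Fcal$. The quotient $\Ccal^{**}/\Ccal$ is coherent and supported on the locus where $\Ccal$ fails to be reflexive, which is contained in $\{v\}$. It therefore has finite length.

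\textbf{Isolated singularity at $v$.} We must show that $\Ccal$ is not locally free at $v$. This follows directly from Theorem~\ref{thm:bubbling-rigidity}: the family has locally free fibers for $t\neq0$ and a torsion-free central fiber that is locally free off $v$. If $\Ccal$ were locally free at $v$, the family would be locally free near $(v,0)$ by flatness, contradicting Lemma~\ref{Lemma-5.4}, which says $(v,0)$ is an isolated singularity of $\Ecal$.

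Alternatively, one can argue without the theorem: $\Fcal$ is not locally free at $v$ because $\underline{\Ecal}$ is stable with $c_1=0$ and hence not a sum of line bundles $\Ocal(a)^{\oplus 2}$. Indeed, a cone $j_*\pi^*\underline{\Ecal}$ is locally free at the vertex iff $\underline{\Ecal}$ splits as a sum of line bundles. If $\Ccal$ were locally free at $v$, then $\Ccal=\Ccal^{**}=\Fcal$ would be locally free, a contradiction.

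\textbf{Expected main obstacle.} The delicate point is the identification of $\widehat{\Phi}$ on the strict transform with $\pi$, and the verification that the central fiber of $p_*\widehat{\Ecal}$ agrees with the restriction computed upstairs away from $v$. Flatness of $\Ecal$ and the fact that $p$ is an isomorphism off $(v,0)$ should make this local and formal; we will check it in the graph-closure coordinates.
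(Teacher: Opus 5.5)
You have a gap in the torsion-freeness step. Lemma~\ref{ReflexiveProperty} runs in the opposite direction from the one you need. It \emph{assumes} that the central fiber is torsion-free, and it uses that hypothesis twice (to kill the torsion $\kappa$ and to kill $\Ecal^{**}/\Ecal$) in order to conclude that $\Ecal$ is reflexive. Starting from reflexivity of $\Ecal$, its argument says nothing about finite-length subsheaves of $\Ccal$, so ``applies verbatim'' is circular.

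The implication you actually need is the converse: a reflexive sheaf on the smooth fourfold restricts to $\{t=0\}$ with no sections supported at $v$. The paper imports exactly this from \cite[Lemma 3.14]{ChenSun:20a}; the same fact is used as \cite[Lemma 3.23]{ChenSun:20a} in Lemma~\ref{lem:first-multiplicity}. It is easy to supply directly. $\Ecal$ has depth at least two at $(v,0)$ and $t$ is a nonzerodivisor, so $\Ccal$ has depth at least one at $v$. Concretely:
\begin{enumerate}
\item If a local section $s$ of $\Ccal$ is killed by $\mathfrak m_v^N$, lift it to $\tilde s$ and write $z_i^N\tilde s=te_i$ in affine coordinates centered at $v$.
\item The sections $\tilde s/t$ on $\{t\neq0\}$ and $e_i/z_i^N$ on $\{z_i\neq0\}$ agree on overlaps, because $\Ecal$ has no $t$-torsion. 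They therefore glue to a section of $\Ecal$ off $(v,0)$.
\item By reflexivity this section extends to some $e$ with $te=\tilde s$, so $s=0$.
\end{enumerate}
This is the genuinely delicate point. The identification of $\widehat{\Phi}$ with $\pi$ on the strict transform, which you flag instead, is immediate from the graph-closure equations.

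Once torsion-freeness is fixed, your identification $\Ccal|_{\PBbb^3\setminus\{v\}}\cong\pi^*\underline{\Ecal}$ and the conclusion $\Ccal^{**}=j_*j^*\Ccal\cong\Fcal$ are correct. They are more explicit than the paper's ``by construction.''

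For non-local-freeness at $v$, drop the appeal to Theorem~\ref{thm:bubbling-rigidity}. That theorem presupposes that the central fiber is already singular at the origin, so it cannot prove it. Your flatness argument only moves the question to whether $\Ecal$ is singular at $(v,0)$, and the proof of Lemma~\ref{Lemma-5.4} establishes only reflexivity, not that. Your alternative argument is the right one and is self-contained:
\begin{enumerate}
\item Near the vertex, $\Fcal$ is associated to the graded module $\bigoplus_k H^0(\underline{\Ecal}(k))$ over $\CBbb[z_1,z_2,z_3]$.
\item This module is free iff $\underline{\Ecal}$ splits (Horrocks), and a stable rank-two bundle with $c_1=0$ cannot split.
\item If $\Ccal$ were locally free at $v$, then $\Ccal=\Ccal^{**}\cong\Fcal$ there, a contradiction.
\end{enumerate}
This fills in a point that the paper's citation leaves implicit.
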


\begin{proof}
Since $\Ecal$ is reflexive with an isolated point singularity at $(v,0)$, by \cite[Lemma $3.14$]{ChenSun:20a},  $\Ccal$ is torsion free with an isolated point singularity at the vertex. By construction, it is then clear that
$$
\Ccal^{**}\cong \Fcal .
$$
Therefore $\Ccal^{**}/\Ccal$ is supported at $v$, and hence has finite
length.
\end{proof}

\begin{lem}
The central fiber satisfies
$$
c_3(\Ccal)=0.
$$
In particular,
$$
h^0\bigl(\Ecal xt^1(\Ccal,\Ocal_{\PBbb^3})\bigr)
=
c_3(\Ccal^{**})
=
2\,l(\Ccal^{**}/\Ccal).
$$
\end{lem}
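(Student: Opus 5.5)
The plan is to prove $c_3(\Ccal)=0$ by flatness, and then obtain the two displayed equalities by combining Hartshorne's $c_3$ formula for rank-two reflexive sheaves with the local bubbling identity of Theorem~\ref{thm:bubbling-rigidity}.

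\emph{Vanishing of $c_3(\Ccal)$.} By Lemma~\ref{Lemma-5.4}, $\Ecal$ is flat over $\Delta$, so the Chern classes of the fibers $\Ecal_t$ are locally constant in $t$. This holds because $\Ecal$ has a global locally free resolution on $\PBbb^3\times\Delta$, and by the argument of Lemma~\ref{RestrictionOfResolution} that resolution restricts to locally free resolutions of every fiber. For $t\neq 0$, the fiber $\Ecal_t\cong\Phi_t^*\Gcal$ is a rank-two vector bundle, so $c_3(\Ecal_t)=0$. Hence $c_3(\Ccal)=c_3(\Ecal_t)=0$.

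\emph{The $c_3$ bookkeeping.} Next we compare $\Ccal$ with $\Ccal^{**}$ using
$$
0\to\Ccal\to\Ccal^{**}\to Q\to 0,\qquad Q=\Ccal^{**}/\Ccal,
$$
where $Q$ is zero-dimensional of length $l(Q)$. On $\PBbb^3$ a length-$l$ zero-dimensional sheaf has $\operatorname{ch}=(0,0,0,l\,[\mathrm{pt}])$, i.e.\ $c_1=c_2=0$ and $c_3=2l$. Whitney multiplicativity then gives
$$
c_3(\Ccal^{**})=c_3(\Ccal)+2l(Q)=2l(Q).
$$
This needs only the standard convention for $c_3$ of a torsion-free sheaf and the additivity of $\operatorname{ch}$. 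Finally, Hartshorne's formula for a rank-two reflexive sheaf on $\PBbb^3$ \cite{Hartshorne:1980} gives $c_3(\Ccal^{**})=h^0(\Ecal xt^1(\Ccal^{**},\Ocal_{\PBbb^3}))$. Lemma~\ref{Lemma-Defect only depends on reflexive hull}, whose proof is purely local and so applies near $v$, identifies this with $h^0(\Ecal xt^1(\Ccal,\Ocal_{\PBbb^3}))$.

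\emph{Consistency with the local identity, and the expected obstacle.} Since $\Ccal$ is singular only at $v$, Theorem~\ref{thm:bubbling-rigidity} applied to the germ of $\Ecal$ at $(v,0)$ yields the same identity $l(\Ecal xt^1)=2l(Q)$ locally. So the global $c_3$ computation and the local rigidity identity cross-check each other; either one alone suffices for the final equality once $c_3(\Ccal)=0$ is known. I expect the main care to go into the normalization of $c_3$ for a zero-dimensional sheaf, specifically the sign and the factor $2$. To pin this down I would compute directly with the Koszul resolution of $\Ocal_{pt}$ and
$$
c(\Ocal(-1))^{-3}c(\Ocal(-2))^{3}c(\Ocal(-3))^{-1}c(\Ocal(-4))\Big|_{\deg 3}=2,
$$
which confirms the value $c_3=2l$ used above.
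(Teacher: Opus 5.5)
Your argument is correct and is essentially the paper's: flatness gives $c_3(\Ccal)=0$; Hartshorne's formula combined with $\Ecal xt^1(\Ccal,\Ocal_{\PBbb^3})\cong \Ecal xt^1(\Ccal^{**},\Ocal_{\PBbb^3})$ gives $h^0(\Ecal xt^1(\Ccal,\Ocal_{\PBbb^3}))=c_3(\Ccal^{**})$; and additivity along $0\to\Ccal\to\Ccal^{**}\to Q\to 0$ gives $c_3(\Ccal^{**})=2l(Q)$ (you use the Whitney formula with $c_3(Q)=2l(Q)$, the paper uses $\operatorname{ch}_3=\tfrac12 c_3$ with $c_1=0$, which is equivalent). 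One slip in your sanity check: the Koszul resolution of a point on $\PBbb^3$ has only the terms $\Ocal,\Ocal(-1)^{\oplus 3},\Ocal(-2)^{\oplus 3},\Ocal(-3)$, so the correct product is $c(\Ocal(-1))^{-3}c(\Ocal(-2))^{3}c(\Ocal(-3))^{-1}=1+2h^3$ with $h$ the hyperplane class, and your extra factor $c(\Ocal(-4))$ leaves the degree-three coefficient at $2$ but would introduce a spurious $c_1=-4h$.
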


\begin{proof}
Flatness of $\Ecal$ over $\Delta$ gives
$$
c_3(\Ccal)=c_3(\Ecal_t).
$$
For $t\neq 0$, the sheaf $\Ecal_t$ is a rank-two vector bundle, so
$c_3(\Ecal_t)=0$. Hence
$$
c_3(\Ccal)=0.
$$
Since $\Ccal^{**}/\Ccal$ is supported at the vertex, we know
$$
\Ecal xt^1(\Ccal,\Ocal_{\PBbb^3})
\cong
\Ecal xt^1(\Ccal^{**},\Ocal_{\PBbb^3}).
$$
By \cite[Proposition $2.6$]{Hartshorne:1980}, applied to the rank-two
reflexive sheaf $\Ccal^{**}$,
$$
h^0\bigl(\Ecal xt^1(\Ccal,\Ocal_{\PBbb^3})\bigr)
=
c_3(\Ccal^{**}).
$$
On the other hand, additivity of Chern characters gives
$$
\operatorname{ch}_3(\Ccal^{**})
=
\operatorname{ch}_3(\Ccal)+\operatorname{ch}_3(Q)
=
\operatorname{ch}_3(\Ccal)+l(Q).
$$
Since $c_1(\Ccal)=c_1(\Ccal^{**})=0$,
$$
\operatorname{ch}_3(\Ccal)=\frac{1}{2}c_3(\Ccal),
\qquad
\operatorname{ch}_3(\Ccal^{**})=\frac{1}{2}c_3(\Ccal^{**}).
$$
Together with $c_3(\Ccal)=0$, this gives
$$
c_3(\Ccal^{**})
=
2\,l(Q)
=
2\,l(\Ccal^{**}/\Ccal).
$$
The result follows.
\end{proof}
Combining the general-fiber identification above with the preceding lemmas,
the constructed family satisfies the asserted properties in Proposition~\ref{smoothProjectiveCones} except the uniqueness, which follows from the main results in \cite{ChenSun:20b}. This finishes the proof of Proposition~\ref{smoothProjectiveCones}.

We now turn to the analytic side. Fix a sequence $t_i\to 0$ with
$t_i\neq 0$. By the Donaldson--Uhlenbeck--Yau theorem, each stable bundle
$\Ecal_{t_i}$ admits a HYM metric $H_i$ with respect to $\omega_{FS}$.
Let $A_i$ be its Chern connection. After choosing smooth bundle
identifications and a reference Hermitian metric, we regard the $A_i$ as
unitary connections on a fixed Hermitian bundle. By passing to a subsequence, we obtain a Uhlenbeck limit $(A_\infty, \Ecal_\infty, Z_b)$.

\begin{prop}
\begin{enumerate}
\item There is no pure codimension two bubbling. More precisely, $Z_b=0$. In particular, 
$$
|F_{A_i}|_{\omega_{FS}}^2\,\dvol_{\omega_{FS}}
\rightharpoonup
|F_{A_\infty}|_{\omega_{FS}}^2\,\dvol_{\omega_{FS}}
$$
as Radon measures on $\PBbb^3$.

\item the limiting HYM connection is a cone. More precisely, $
A_\infty=\pi^*\underline{A}
$ on $\PBbb^3\setminus\{v\}$, where $\underline{A}$ is the Chern connection
of the HYM metric $\underline{H}$ on $\underline{\Ecal}$. In particular, the algebraic bubble restricts at infinity to the link bundle of
$A_\infty$:
$$
\left.\Gcal\right|_{H_\infty}\cong \underline{\Ecal}.
$$

\item The related energy densities at the vertex satisfy
$$
\Theta_v(A_\infty)=\Delta(\Gcal)
$$
where $\Delta(\Gcal)= c_2(\Gcal)-\frac{1}{4}c_1(\Gcal)^2$.
\end{enumerate}
\end{prop}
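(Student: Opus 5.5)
The plan mirrors Proposition~\ref{StableReflexiveSheafImpliesHYMConvergence}. We first identify the limiting reflexive sheaf, then rule out codimension-two bubbling using a Chern character count, and finally read off the density from the cone structure.

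\textbf{Step 1: the limit sheaf and its connection.} By semicontinuity \cite{Donaldson:85} there is a nonzero map $\Ccal^{**}\to\Ecal_\infty$. We have $\Ccal^{**}\cong\Fcal=j_*\pi^*\underline{\Ecal}$. This sheaf is stable: a destabilizing rank-one subsheaf would restrict, on a general line through $v$, to a subsheaf of $\pi^*\underline{\Ecal}$ pulled back from $H_\infty$, hence would descend to a destabilizing subsheaf of $\underline{\Ecal}$. Since $\Ecal_\infty$ is polystable of the same slope, the map is an isomorphism, so $\Ecal_\infty\cong\Fcal$. By Lemma~\ref{Lemma5.1}, $\pi^*\underline{H}$ is an admissible HYM metric on $\Fcal$. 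Uniqueness of admissible HYM metrics on stable reflexive sheaves \cite{BandoSiu:94} then gives $A_\infty=\pi^*\underline{A}$ up to gauge, which is item (2). The identity $\Gcal|_{H_\infty}\cong\underline{\Ecal}$ is already part of Proposition~\ref{smoothProjectiveCones}.

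\textbf{Step 2: no blow-up cycle.} This follows the earlier argument. The singular Bott--Chern formula \cite{SibleyWentworth:15} identifies $[Z_b]$ with $\operatorname{ch}_2(\Ecal_\infty)-\operatorname{ch}_2(\Ecal_t)$. Flatness gives $\operatorname{ch}_2(\Ecal_t)=\operatorname{ch}_2(\Ccal)$. Since $\Fcal/\Ccal$ is zero-dimensional, $\operatorname{ch}_2(\Ccal)=\operatorname{ch}_2(\Ccal^{**})$. Hence the class of $Z_b$ vanishes. Because $Z_b$ is an effective curve cycle on $\PBbb^3$, it is zero, and the Radon measures converge as stated in item (1).

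\textbf{Step 3: the density.} The connection $A_\infty$ is exactly conical near $v$ (with respect to $\omega_0$, which is uniformly equivalent to $\omega_{FS}$ at $v$), so it is its own tangent cone. Its tangent-cone data are therefore $(\pi^*\underline{A},0)$. The density is computed by integrating over the unit ball in polar coordinates. Horizontality of $F$ and the scaling $|F|^2\sim r^{-4}|F_{\underline A}|^2$ reduce $\frac{1}{8\pi^3}\int_{B_1}|F_{A_\infty}|^2$ to a constant multiple of $\int_{\PBbb^2}|F_{\underline{A}}|^2$. For an HYM connection with $c_1=0$ on $\PBbb^2$, the Chern--Weil formula gives $\int|F_{\underline A}|^2=8\pi^2 c_2(\underline{\Ecal})$ (traceless, ASD-type). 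With the normalization fixed so that a multiplicity-one line has density one, as checked in Proposition~\ref{StableReflexiveSheafImpliesHYMConvergence}, this yields $\Theta_v(A_\infty)=c_2(\underline{\Ecal})$. Finally, $c_2(\underline{\Ecal})=c_2(\Gcal|_{H_\infty})=c_2(\Gcal)$ and $c_1(\Gcal)=0$, so $\Theta_v(A_\infty)=\Delta(\Gcal)$.

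\textbf{Main obstacle.} The delicate points are the constant bookkeeping in Step 3, which I would calibrate against the line computation ($\pi$ versus $8\pi^3$), and the stability of the cone sheaf in Step 1. For the latter I would argue via descent of saturated subsheaves under the $\CBbb^*$-action.
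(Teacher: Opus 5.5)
Your proposal is correct and follows the paper's own proof. The paper reruns the argument of Proposition~\ref{StableReflexiveSheafImpliesHYMConvergence}: the semicontinuity map $\Ccal^{**}\to\Ecal_\infty$, then the $\operatorname{ch}_2$ count showing $Z_b=0$. It identifies $A_\infty=\pi^*\underline{A}$ via the admissible metric of Lemma~\ref{Lemma5.1}, and reads off $\Theta_v(A_\infty)=\frac{1}{8\pi^2}\int_{H_\infty}|F_{\underline A}|^2=\Delta(\Gcal)$ from the cone computation.

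You add two details the paper leaves implicit: the stability of the cone sheaf $\Fcal$, and the uniqueness of the admissible HYM metric. For the stability step, the restriction-to-lines heuristic does not work as stated. The direct route is
$$
H^0(\PBbb^3,\Fcal)=H^0(\PBbb^3\setminus\{v\},\pi^*\underline{\Ecal})=\bigoplus_{j\ge0}H^0(H_\infty,\underline{\Ecal}(-j))=0,
$$
which suffices because $c_1(\Fcal)=0$. For the density, you need $\omega_{FS}=\omega_0+O(r^2)$ near $v$, which does hold, rather than mere uniform equivalence.
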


\begin{proof}
The central reflexive hull is $\Ccal^{**}\cong \Fcal$, and
$\Ccal^{**}/\Ccal$ is supported at the vertex. Hence
the argument for Proposition~\ref{StableReflexiveSheafImpliesHYMConvergence} applies to
the present family and gives convergence to the admissible HYM connection
on $\Fcal$ with zero bubbling cycle. By the construction of the admissible
metric on $\Fcal$ (see Lemma \ref{Lemma5.1}), this limiting connection is
$A_\infty=\pi^*\underline{A}$. This proves the first two assertions. The density statement is the standard cone computation:
$$
\Theta_v(A_\infty)
=
\frac{1}{8\pi^2}\int_{H_\infty}|F_{\underline{A}}|^2\,\dvol_{\underline{\omega}_{FS}}
=
\,\Delta(\underline{\Ecal})
=
\,\Delta(\Gcal).
$$
The last
equality uses $\left.\Gcal\right|_{H_\infty}\cong\underline{\Ecal}$.
\end{proof}

The following example shows that the algebraic multiplicities above and the
analytic energy density may scale differently.

\begin{exam}\label{NoRelation}
For $k\geq 1$, consider on $H_\infty\cong\PBbb^2$ the exact sequence
$$
0\longrightarrow \Ocal(-3k)
\xrightarrow{\left(\begin{smallmatrix}
Z_1^{2k}\\ Z_2^{2k}\\ Z_3^{2k}
\end{smallmatrix}\right)}
\Ocal(-k)^{\oplus 3}
\longrightarrow \underline{\Ecal}
\longrightarrow 0 .
$$
The three entries have no common zero on $H_\infty$, so
$\underline{\Ecal}$ is a rank-two vector bundle. Moreover,
$H^0(H_\infty,\underline{\Ecal}(-a))=0$ for all $a\geq 0$, hence
$\underline{\Ecal}$ is stable. If $h=c_1(\Ocal(1))$, then
$$
c(\underline{\Ecal})
=
\frac{(1-kh)^3}{1-3kh}
=
1+3k^2h^2 .
$$
Thus $c_1(\underline{\Ecal})=0$ and
$$
c_2(\underline{\Ecal})=3k^2 .
$$

Let $\Fcal=j_*\pi^*\underline{\Ecal}$ be the associated reflexive cone.
It has the presentation
$$
0\longrightarrow \Ocal_{\PBbb^3}(-3k)
\longrightarrow \Ocal_{\PBbb^3}(-k)^{\oplus 3}
\longrightarrow \Fcal
\longrightarrow 0 ,
$$
with the same column map. Hence
$$
c_3(\Fcal)=8k^3 .
$$
For the smoothing constructed above, $\Ccal^{**}\cong\Fcal$, and the
identity
$$
m_0^{\mathrm{alg}}=2m^{\mathrm{alg}}=c_3(\Ccal^{**})
$$
gives
$$
m^{\mathrm{alg}}=4k^3,\qquad m_0^{\mathrm{alg}}=8k^3 .
$$

On the other hand, the vertex density satisfies
$$
\Theta_v(A_\infty)
=
\Delta(\underline{\Ecal})
=
c_2(\underline{\Ecal})
=
3k^2.
$$
In particular, there is no universal
proportionality between the algebraic bubbling multiplicity  and the analytic energy
density at the vertex.
\end{exam}

\bibliography{papers}   
\end{document}